%
%
%
%

\documentclass[12pt]{amsart}

\usepackage[marginratio=1:1,height=600pt,width=404pt,tmargin=110pt]{geometry}

\usepackage{amsmath, amsfonts, amssymb, amscd, bezier, amstext, amsthm}
\usepackage[latin1]{inputenc}
\usepackage{color}
\usepackage{graphicx}
\usepackage{fancyhdr}
\usepackage{array}
\usepackage{esint}
\usepackage{hyperref}
\usepackage{enumitem}

\pretolerance=10000




\newcommand{\R}{\mathbb{R}}





\newcounter{alpha}

\theoremstyle{plain}

\newtheorem{theorem}{Theorem}[section]

\newtheorem{lemma}[theorem]{Lemma}
\newtheorem{corollary}[theorem]{Corollary}
\newtheorem{proposition}[theorem]{Proposition}

\newtheorem{remark}[theorem]{Remark}

\title[Nonlinear elliptic systems with critical exponent]{Qualitative properties of positive singular solutions to nonlinear elliptic systems with critical exponent}


\author[R. Caju]{Rayssa Caju}

\author[J.M.\ do \'O]{Jo\~ao Marcos do \'O}

\author[A. Santos]{Almir Silva Santos}

\address[R. Caju]{Department of Mathematics,
	\newline\indent 
	Federal University of Para\'{\i}ba
	\newline\indent
	58051-900, Jo\~ao Pessoa-PB, Brazil}
\email{\href{mailto:rayssacaju@gmail.com}{rayssacaju@gmail.com}}

\address[J.M. do \'O]{Department of Mathematics,
	\newline\indent 
	Federal University of Para\'{\i}ba
	\newline\indent
	58051-900, Jo\~ao Pessoa-PB, Brazil}
\email{\href{mailto:jmbo@pq.cnpq.br}{jmbo@pq.cnpq.br}}

\address[A. S. Santos]{Department of Mathematics,
	\newline\indent 
	Federal University of Sergipe
	\newline\indent
	49100-000, S\~ao Cristov\~{a}o-SE, Brazil}
\email{\href{mailto:almir@mat.ufs.br}{almir@mat.ufs.br}}
\date{}

\numberwithin{equation}{section}


\begin{document}
	
%
%

\begin{abstract}
	We studied the asymptotic behavior of local solutions for strongly coupled critical elliptic systems near an isolated singularity.
	For the dimension less than or equal to five we prove that any singular solution is asymptotic to a rotationally symmetric Fowler type solution. 
	This result generalizes the celebrated work due to Caffarelli, Gidas, and Spruck \cite{CGS} who studied asymptotic proprieties to the classic Yamabe equation. 	In addition, we generalize similar results by Marques \cite{marques} for inhomogeneous context, that is, when the metric is not necessarily conformally flat.
\vskip 0.1truein
\noindent 2000 Mathematics Subject Classification: 35J60, 35B40, 35B33, 53C21.\\
\noindent\textbf{Key words:} Critical equations; Elliptic systems; Asymptotic symmetry; Singular solution; A priori estimate.
\end{abstract}

\maketitle

%
%


%
%

\section{Introduction}

Let $g$ be a smooth  Riemannian metric on the unit ball $B^{n}_{1}(0)\subset \mathbb R^{n} (n\geq 3 )$. 
In this paper we derive asymptotic behavior for positive singular solutions to the
following class of nonlinear elliptic system with critical exponent defined inhomogeneous context
\begin{equation}\label{S}
-\Delta_{g}u_{i} + \sum_{j=1}^{2}A_{ij}(x)u_{j} = \frac{n(n-2)}{4}|\mathcal{U}|^{\frac{4}{n-2}}u_{i}, \;\;\; \mbox{for } i=1,2,
\end{equation}
in the punctured ball $\Omega = B^{n}_{1}(0)\backslash\{0\}$. Here $ \mathcal{U}=(u_1,u_2)$,  $\Delta_g= \mathrm{div}_g \nabla $ is the Laplace Beltrami operator of the metric $g$ and $A$ is a $C^{1}$ map from $\Omega$  to $M_{2}^{s}(\mathbb R)$ the vector space of symmetrical $2\times 2$ real matrices. We can write $A = (A_{ij})_{i,j}$, where each $A_{ij}$ is a $C^{1}$ real valued function.
We say that $ \mathcal{U}=(u_1,u_2)$ is a \emph{positive} solution of \eqref{S} if each coordinate $u_{i}$ is a positive function for $i=1,2$. 
Here $ |\mathcal{U}| $ denotes de Euclidean norm of $\mathcal{U}$, that is, $ |\mathcal{U}|^2=u_1^2+u_2^2$.

O. Druet and E. Hebey \cite{DH} have investigated and proved analytic stability for strongly coupled critical elliptic systems in the inhomogeneous context of a compact Riemannian manifold. This kind of coupled system of nonlinear Schrödinger equations are part of important branches of mathematical physics like in fiber-optic theory and in the behavior of deep water waves and freak waves in the ocean. See \cite{DH} and \cite{DHV} and the reference therein. The critical systems $\eqref{S}$ are weakly coupled by the linear matrix $A$ and strongly coupled by the Gross--Pitaevskii type nonlinearity in the right-hand side of \eqref{S}. Moreover, system \eqref{S} can be seen as a natural generalization of the singular Yamabe problem for an arbitrary metric.

We also mention the recent work of Z. Chen and C.-S. Lin \cite{CL}, where they have studied a different two-coupled elliptic system with critical exponent, although also is an extension of Yamabe-type  equation in the strongly coupled regime.

We call a solution $ \mathcal{U} $ of \eqref{S} singular at origin (nonremovable singularity) if 
\begin{equation}\label{ls1}
\lim_{|x|\rightarrow 0} |\mathcal{U}(x)| = +\infty.
\end{equation}

 For easy reference, we introduce now the following hypotheses on the potential 
\begin{enumerate}[label=({H\arabic*})]
	\item \label{H1} The coefficient $A_{ij}$ are nonpositive for any $i\not=j$, which is equivalent to say that $-A$ is \emph{cooperative}.
	\item  \label{H2} In dimension $n=5$, there exists a $C^2$-function $f$ such that
	\begin{equation*}
	A(x) = f(x)Id_2 + O(|x|)
	\end{equation*}
	near the origin, where $Id_2$ is the identity matrix.
\end{enumerate}

To understand our main result in this work we consider the Fowler solutions (or Delaunay-type solutions) which are radial solutions of
\begin{equation}\label{eq021}
\Delta u+\frac{n(n-2)}{4}u^{\frac{n+2}{n-2}}=0 \quad \mbox{in} \quad \mathbb R^n\backslash\{0\},
\end{equation}
which blow-up at zero. Here $\Delta$ denote the Euclidean Laplacian. By  Caffarelli, Gidas and Spruck in \cite{CGS}, it is well known that any solution to \eqref{eq021} with a nonremovable singularity is rotationally invariant.

In order to derive asymptotic properties for singular positive solutions of \eqref{S} it is crucial  in our argument to study qualitative properties of the limit system
\begin{equation}\label{LS}
\Delta u_i + \frac{n(n-2)}{4}|\mathcal{U}|^{\frac{4}{n-2}}u_i=0\quad  \mbox{in} \quad \mathbb R^n \backslash \{0\} , \; i=1,2,
\end{equation}
which is obtained after blowing up the system \eqref{S}. First of all we prove that any solution of \eqref{LS} is radially symmetric.

\begin{theorem}[Radial Symmetry]\label{RS1} Let $n \geq 3$ and $\mathcal{U} \in C^2(\mathbb R^n \backslash \{0\})$ be a nonnegative singular solution of \eqref{LS}. Then $\mathcal{U}$ is radially symmetric about the origin.
\end{theorem}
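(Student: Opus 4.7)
The plan is to adapt the moving planes method of Caffarelli--Gidas--Spruck \cite{CGS} from the scalar critical equation \eqref{eq021} to the coupled system \eqref{LS}. Radial symmetry about the origin will follow once $\mathcal{U}$ is shown to be invariant under reflection across every hyperplane through the origin; the procedure delivers exactly this for every direction $e \in S^{n-1}$.

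Fix such a direction $e$ and, for $\lambda > 0$, set $T_\lambda = \{x \in \mathbb R^n : x \cdot e = \lambda\}$, $\Sigma_\lambda = \{x \in \mathbb R^n : x \cdot e > \lambda\}$, and let $x^\lambda$ be the reflection of $x \in \Sigma_\lambda$ across $T_\lambda$. Since $\Delta$ is isometry-invariant and the right-hand side of \eqref{LS} depends only on $|\mathcal{U}|$, the reflected vector $\mathcal{U}^\lambda(x) := \mathcal{U}(x^\lambda)$ also solves \eqref{LS} on $\Sigma_\lambda \setminus \{2\lambda e\}$. Setting $w_i^\lambda = u_i^\lambda - u_i$, the mean value theorem applied to the critical nonlinearity produces a weakly coupled cooperative linear system
\begin{equation*}
-\Delta w_i^\lambda = c_{i1}^\lambda(x)\, w_1^\lambda + c_{i2}^\lambda(x)\, w_2^\lambda \quad \text{in } \Sigma_\lambda \setminus \{2\lambda e\},\ i = 1,2,
\end{equation*}
with nonnegative coefficients $c_{ij}^\lambda$, arising from componentwise monotonicity of $\mathcal{U} \mapsto |\mathcal{U}|^{4/(n-2)}\mathcal{U}$ on the nonnegative orthant.

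The next step is to show that $w_i^\lambda \geq 0$ in $\Sigma_\lambda$ for all $\lambda$ sufficiently large: the boundary values on $T_\lambda$ vanish, $w_i^\lambda \to +\infty$ at the reflected singularity $2\lambda e$ (since $u_i^\lambda$ blows up there while $u_i$ is smooth), and decay of $\mathcal{U}$ at infinity---established via the Kelvin transform $|x|^{2-n}\mathcal{U}(x/|x|^2)$ and standard elliptic estimates for critical equations---controls the behavior as $|x|\to\infty$; a maximum principle for weakly coupled cooperative systems (of de Figueiredo--Mitidieri or Sweers type) then yields nonnegativity. I would then define $\lambda_0 = \inf\bigl\{\mu > 0 : w_i^\lambda \geq 0 \text{ in } \Sigma_\lambda \text{ for every } \lambda \geq \mu \text{ and } i = 1,2\bigr\}$ and argue by contradiction that $\lambda_0 = 0$. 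Assuming $\lambda_0 > 0$, continuity gives $w_i^{\lambda_0} \geq 0$, and the strong maximum principle for the cooperative system leaves two alternatives: either $w_i^{\lambda_0} \equiv 0$---ruled out because $u_i^{\lambda_0}$ is singular at $2\lambda_0 e$ while $u_i$ is not---or $w_i^{\lambda_0} > 0$ throughout $\Sigma_{\lambda_0}$, in which case Hopf's lemma combined with a compactness argument allows the inequality to persist for $\lambda$ slightly below $\lambda_0$, contradicting minimality. Thus $\lambda_0 = 0$, so $w_i^0 \geq 0$ on $\{x \cdot e > 0\}$; running the same argument with $-e$ gives the opposite inequality, hence reflection symmetry of $\mathcal{U}$ across $e^\perp$. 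Since $e$ was arbitrary, $\mathcal{U}$ is radial about the origin.

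The principal obstacle will be the maximum principle step for the coupled pair $(w_1^\lambda, w_2^\lambda)$ on the unbounded domain $\Sigma_\lambda$: the scalar maximum principle does not directly apply to each equation separately because of the off-diagonal coupling $c_{ij}^\lambda w_j^\lambda$, so one must first extract enough decay of $\mathcal{U}$ at infinity (via Kelvin transform and the standard theory for critical elliptic equations) and then invoke a suitable cooperative-system version of the comparison principle. Handling the singularity at the reflected point $2\lambda e$---typically by excising a small ball and arguing via comparison on an annular region before passing to the limit---is the other technically delicate ingredient.
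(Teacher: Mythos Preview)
Your overall strategy---moving planes for the cooperative system---is sound, but there is a genuine gap at the step where you start the process. To obtain $w_i^\lambda \geq 0$ on $\Sigma_\lambda$ for large $\lambda$ you need quantitative control of $\mathcal{U}$ at infinity, and your claim that this follows from ``the Kelvin transform $|x|^{2-n}\mathcal{U}(x/|x|^2)$'' is circular: that transform, being centered at the origin, exchanges the behavior of $\mathcal{U}$ near $0$ and near $\infty$, so boundedness of the transformed function near $0$ is \emph{equivalent} to the decay of $\mathcal{U}$ at infinity you are trying to establish. No a priori information on $\mathcal{U}$ at infinity is assumed, and indeed for Fowler solutions $u(r)\sim r^{(2-n)/2}\,v(-\log r)$ with $v$ periodic, the asymptotics are oscillatory rather than of the clean $|x|^{2-n}$ type needed for the CGS starting lemma.

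The paper avoids this difficulty by performing the Kelvin transform about an arbitrary \emph{regular} point $z\neq 0$: setting $U_i(x)=|x|^{2-n}u_i(z+x/|x|^2)$ yields a solution of the same system on $\mathbb R^n\setminus\{0,z_0\}$ with the precise harmonic expansion $U_i(x)=u_i(z)|x|^{2-n}+\partial_j u_i(z)\,x_j|x|^{-n}+O(|x|^{-n})$ at infinity. This expansion is exactly what feeds into Lemma~2.3 of \cite{CGS} and lets the moving planes begin. One then proves $U_i$ is axisymmetric about the axis through $0$ and $z_0$; since $z$ is arbitrary, $u_i$ is radial. A secondary point: you assert that each $u_i^\lambda$ blows up at the reflected singularity, but the hypothesis only gives $|\mathcal{U}|\to\infty$ at $0$; the paper handles this by observing that at least one component, say $u_1$, blows up, which is enough to rule out reflection invariance at the critical parameter $\lambda^*$.
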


Using this theorem we are capable to prove the following classification result 
\begin{theorem}[Classification]\label{class1} Let $n \geq 3$ and $ \mathcal{U} \in C^2(\mathbb R^n \backslash \{0\})$ be a nonnegative singular solution of \eqref{LS}.
	Then there exists  $\Lambda \in \mathbb{S}^{1}_{+} = \{x \in \mathbb{S}^{1} : x_{i} \geq 0\}$ and a Fowler solution $u_0$ such that 
	\begin{equation*}
	\mathcal{U} = u_{0}\Lambda.
	\end{equation*}
\end{theorem}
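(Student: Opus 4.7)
The overall plan is to apply Theorem~\ref{RS1} to reduce to an ODE system in $r = |x|$, pass to Emden--Fowler variables, and exploit the conserved angular momentum of the resulting rotationally invariant Hamiltonian system to show that the direction of $\mathcal{U}$ is constant.

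By Theorem~\ref{RS1}, write $\mathcal{U}(x) = (u_1(r), u_2(r))$. If one component, say $u_2$, vanishes identically, then $u_1$ is a nonnegative singular radial solution of~\eqref{eq021}, hence a Fowler solution by Caffarelli--Gidas--Spruck, and the conclusion holds with $\Lambda = (1,0) \in \mathbb{S}^1_+$. Otherwise each $u_i \geq 0$ satisfies the linear equation $\Delta u_i + c(x) u_i = 0$ with $c = \tfrac{n(n-2)}{4}|\mathcal{U}|^{4/(n-2)} \geq 0$; thus $u_i$ is superharmonic, and by the strong maximum principle $u_1, u_2 > 0$ throughout $\mathbb{R}^n \setminus \{0\}$.

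Set $t = \log r$ and $w_i(t) = r^{(n-2)/2} u_i(r)$; a direct computation transforms~\eqref{LS} into the autonomous system
\[
w_i'' - \frac{(n-2)^2}{4} w_i + \frac{n(n-2)}{4} |W|^{4/(n-2)} w_i = 0, \qquad i = 1, 2,
\]
with $W = (w_1, w_2)$. This is Hamiltonian with rotationally invariant potential $V(W) = \tfrac{(n-2)^2}{8}\bigl(|W|^{2n/(n-2)} - |W|^2\bigr)$, so both the energy $H = \tfrac{1}{2}|W'|^2 + V(W)$ and the angular momentum $J = w_1 w_2' - w_2 w_1'$ are conserved. In polar coordinates $w_1 = \rho\cos\theta$, $w_2 = \rho\sin\theta$, positivity of $u_1, u_2$ confines $\theta(t) \in (0, \pi/2)$ for every $t \in \mathbb{R}$, while $J = \rho^2\theta'$.

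The decisive step is showing $J = 0$. If $J \neq 0$, the effective radial potential $V_{\mathrm{eff}}(\rho) = V(\rho) + J^2/(2\rho^2)$ diverges to $+\infty$ both as $\rho \to 0^+$ and as $\rho \to +\infty$. Energy conservation $V_{\mathrm{eff}}(\rho(t)) \leq H$ then traps $\rho$ in a compact annulus $[\rho_-, \rho_+] \subset (0, \infty)$, so $|\theta'(t)| = |J|/\rho(t)^2 \geq |J|/\rho_+^2 > 0$ uniformly, forcing $\theta$ to exit $(0, \pi/2)$ in finite time---a contradiction. Hence $\theta \equiv \theta_0$ is constant; setting $\Lambda := (\cos\theta_0, \sin\theta_0) \in \mathbb{S}^1_+$ and $u_0(r) := r^{-(n-2)/2}\rho(\log r)$ yields a positive singular radial solution of~\eqref{eq021}---a Fowler solution by Caffarelli--Gidas--Spruck---with $\mathcal{U} = u_0 \Lambda$. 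The main obstacle is the trapping argument: one must verify that a nonzero angular momentum is genuinely incompatible with positivity of both components for all $t \in \mathbb{R}$, which rests on the coercivity of $V_{\mathrm{eff}}$ and the autonomous nature of the Emden--Fowler reduction.
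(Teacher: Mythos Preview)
Your proof is correct and follows essentially the same strategy as the paper: reduce to the ODE system via Theorem~\ref{RS1}, pass to Emden--Fowler variables, observe that the Wronskian $J = w_1 w_2' - w_2 w_1'$ is conserved, and derive a contradiction with positivity of both components if $J \neq 0$. The paper obtains the needed upper bound on $|W|$ via separate auxiliary lemmas (their Lemmas~\ref{lemma3.1}--\ref{lemma3.2} showing $v_i < 1$), then argues that $(v_1/v_2)' = c/v_2^2 \geq c$ forces the quotient to become negative; your effective-potential trapping argument achieves the same bound in a more self-contained, mechanically transparent way and phrases the contradiction in terms of the polar angle leaving $(0,\pi/2)$. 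Since $v_1/v_2 = \cot\theta$, the two contradictions are equivalent, and the two write-ups differ only in how the boundedness of $|W|$ is established.
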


We will call a solution of the limit system \eqref{LS} as a {\it Fowler-type solution}. 

Finally, our main result reads as follows
\begin{theorem}[Main Result]\label{main} Assume $3 \leq n \leq 5$ and that the potential A satisfies the hypotheses \ref{H1} and \ref{H2}. Let $\mathcal{U} \in C^2(\Omega)$ be a positive singular solution of $\eqref{S}$.
Then there exists $\alpha > 0$ and a Fowler-type solution $\mathcal U_0$ such that
\begin{equation*}
\mathcal{U}(x) = (1 + O(|x|^{\alpha}))\mathcal{U}_{0}(x)\quad \mbox{as} \quad x \rightarrow 0.
\end{equation*}
\end{theorem}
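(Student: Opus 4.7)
The plan is to adapt the Caffarelli–Gidas–Spruck strategy \cite{CGS}, in the form developed by Marques \cite{marques} for the scalar inhomogeneous setting, to the coupled system by using the classification Theorem \ref{class1} as a structural input. The argument splits naturally into four stages: \textbf{(i)} a sharp a priori upper bound $|\mathcal{U}(x)| \leq C|x|^{-(n-2)/2}$ near the origin; \textbf{(ii)} a matching lower bound from a spherical Harnack inequality enabled by \ref{H1}; \textbf{(iii)} identification of every blow-up limit as a Fowler-type solution via Theorem \ref{class1}; and \textbf{(iv)} a Pohozaev-plus-linearization analysis that pins down a \emph{unique} limit and yields the polynomial rate $\alpha>0$.

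For stages \textbf{(i)}–\textbf{(ii)}, I would argue by contradiction using a selection/rescaling procedure in the spirit of Schoen. If $|x|^{(n-2)/2}|\mathcal{U}(x)|$ were unbounded along $x_k\to 0$, a careful choice of almost-maximizers $p_k$ and the rescaling $\mathcal{V}_k(y) = \mu_k^{(n-2)/2}\mathcal{U}(p_k+\mu_k y)$, with $\mu_k = |\mathcal{U}(p_k)|^{-2/(n-2)}$, produces a $C^2_{\mathrm{loc}}$-convergent sequence whose limit is a positive, nonsingular entire solution of \eqref{LS}; the classification of nonsingular positive solutions of \eqref{LS} forces this limit to be a standard Aubin–Talenti bubble of the form $U_0\Lambda$, which contradicts the maximality built into the selection of $p_k$. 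Once the upper bound is obtained, applying the Harnack inequality for cooperative systems to $|\mathcal{U}|$ on geodesic spheres, a step which crucially uses \ref{H1}, yields the matching lower bound of the same order.

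For stage \textbf{(iii)}, consider the blow-ups $\mathcal{W}_r(y) = r^{(n-2)/2}\mathcal{U}(ry)$ on a fixed annulus. The rescaled system has zeroth-order coefficient $r^2 A(ry)=O(r^2)$ and metric coefficients tending to the Euclidean ones. The two-sided bounds of \textbf{(i)}–\textbf{(ii)} give locally uniform $C^0$ control, which elliptic regularity upgrades to $C^2_{\mathrm{loc}}$ convergence, along subsequences, to a nonnegative singular solution of the limit system \eqref{LS}. By Theorem \ref{class1} every such subsequential limit must be of the form $u_0\Lambda$ for some Fowler profile $u_0$ and some $\Lambda\in\mathbb{S}^{1}_{+}$.

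Stage \textbf{(iv)} is the main technical step and the source of the dimensional restriction. The plan is to use a Pohozaev-type identity on geodesic balls $B_r$: pairing $\sum_i (x\cdot\nabla u_i)\nabla u_i$ with \eqref{S} and integrating defines a quantity $P(r)$ whose derivative is controlled by $\int_{B_r}(x\cdot\nabla A_{ij})u_iu_j$ plus lower-order terms arising from the deviation of $g$ from the Euclidean metric. In dimensions $n=3,4$ these errors are automatically integrable and disappear in the blow-up limit; in $n=5$ hypothesis \ref{H2} is precisely what makes the otherwise dangerous contribution $\int(x\cdot\nabla f)|\mathcal{U}|^2$ subleading, so that $\lim_{r\to 0}P(r)$ exists and selects a unique Fowler necksize and direction $\Lambda$. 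To upgrade uniqueness of the limit to the quantitative rate, I would then linearize \eqref{S} about $\mathcal{U}_0=u_0\Lambda$ in cylindrical variables $t=-\log|x|$: the linearized operator on the half-cylinder $\mathbb{R}_+\times\mathbb{S}^{n-1}$ has explicit indicial roots coming from the Jacobi fields of the Fowler ODE, and after projecting out the finite-dimensional kernel generated by translations, dilations, and $\mathbb{S}^{1}_{+}$-rotations, a weighted-norm fixed-point or barrier argument produces the claimed decay $|\mathcal{U}-\mathcal{U}_0|/|\mathcal{U}_0|=O(|x|^\alpha)$. The principal obstacle, and exactly the reason for the hypotheses \ref{H1}, \ref{H2} and the ceiling $n\leq 5$, is the control of this Pohozaev defect: for $n\geq 6$ the Weyl tensor of $g$ and higher-order structure of $A$ contribute terms to $P'(r)$ that do not vanish in the blow-up limit, and handling them would require considerably stronger hypotheses on $g$ and $A$.
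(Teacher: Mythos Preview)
Your outline has the right four-stage architecture, but there are two genuine gaps in the logical structure that would prevent the argument from going through as written.

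\textbf{The upper bound (stage (i)).} Obtaining an Aubin--Talenti bubble as the limit of the rescaled sequence does \emph{not} by itself contradict the maximality of the selection of $p_k$; in fact, a bubble limit is exactly what one expects if $|x|^{(n-2)/2}|\mathcal{U}(x)|$ is unbounded, so nothing has yet been ruled out. In the paper (following \cite{marques}) the bubble structure is only the \emph{input} for a further step: a moving-planes argument in cylindrical coordinates, applied to the sum $v=v_1+v_2$ with a carefully constructed auxiliary barrier $h_\lambda$ that absorbs the error $Q_\lambda$ coming from the metric and from $A$. The final contradiction is quantitative, coming from $h_{\lambda_0}(-\log(\delta\varepsilon^{-1}))\leq c\,\varepsilon^{(6-n)/2-\gamma}$ versus a fixed positive lower bound, and this is exactly where the dimensional restriction $n\leq 5$ first appears. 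Your proposal omits this entire mechanism.

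\textbf{The lower bound (stage (ii)).} The spherical Harnack inequality only gives $\max_{|x|=r}u_i\leq c\min_{|x|=r}u_i$; it does \emph{not} preclude $|x|^{(n-2)/2}|\mathcal{U}(x)|\to 0$ along some sequence, so it cannot by itself yield $|\mathcal{U}(x)|\geq c|x|^{-(n-2)/2}$. In the paper the correct logical order is: upper bound $\Rightarrow$ Harnack $\Rightarrow$ the Pohozaev integral $P(r,\mathcal{U})$ has a limit $P(\mathcal{U})$ $\Rightarrow$ a removable-singularity theorem showing $P(\mathcal{U})=0$ iff the singularity is removable $\Rightarrow$ lower bound. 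The removable-singularity step is substantial: one derives a second-order differential inequality for the spherical average $w(t)=r^{(n-2)/2}\overline{u}(r)$, and the contradiction again uses $n\leq 5$ (via $\frac{n-6}{n-2}<0$). Hypothesis \ref{H2} is used here as well, in a lemma showing that certain boundary integrals vanish in the limit so that the associated vectors $a,b$ satisfy $a_1b_2=a_2b_1$; your description of \ref{H2} as controlling $\int(x\cdot\nabla f)|\mathcal{U}|^2$ is not how it actually enters.

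Your stages (iii) and (iv) are closer to the paper in spirit. For the final convergence the paper does not use a fixed-point or barrier argument but rather an iterative Leon Simon--type scheme: one shows that for a suitable quantity $\overline\eta(\tau)=\eta(\tau)+e^{-(2-\delta)\tau}$ there is a small translation $s$ with $\overline\eta(\tau+NT_\varepsilon+s)\leq\tfrac12\overline\eta(\tau)$, and then iterates. The Jacobi field analysis you mention is indeed the key ingredient, used to show the renormalized difference $\overline\eta^{-1}(\mathcal{V}_\tau-\Lambda v_\varepsilon)$ has bounded limit with no linearly growing components.
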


As a consequence of Theorem~\ref{main} we have that any positive solution $\mathcal{U}$  for the system 
\begin{equation}\label{LSB}
\Delta u_i + \frac{n(n-2)}{4}|\mathcal{U}|^{\frac{4}{n-2}}u_i=0\quad  \quad \mbox{in} \quad \Omega,  \; i=1,2,
\end{equation}
either $\mathcal{U}$ can be smoothly extended to the origin or there is a Fowler-type solution $\mathcal{U}_{0}$ and there is $\alpha>0$ such that
\begin{equation*}
\mathcal{U}(x) = (1 + O(|x|^{\alpha}))\mathcal{U}_{0}(x)\quad \mbox{as} \quad x \rightarrow 0.
\end{equation*}


In \cite{CGS}, L. Caffarelli, B. Gidas and J. Spruck have proved that any positive solution to
$$\Delta u+\frac{n(n-2)}{4}u^{\frac{n+2}{n-2}}=0\;\;\mbox{ in }\Omega$$
either it can be smoothly extended to the origin or it is asymptotically to some Fowler solution $u_0$ when $x$ goes to zero. Later N. Korevaar, R. Mazzeo, F Pacard and R. Schoen in \cite{KMPS} have improved this result by proving that 
$$u(x)=(1+O(|x|^\alpha))u_0(x)\;\;\mbox{ as }x\rightarrow 0,$$
for some $\alpha>0$. F. C. Marques in \cite{marques} extended this result to a more general setting, at least in low dimensions. More precisely, he has proved that Fowler solutions still serve as asymptotic model to the equation
$$\Delta_gu-\frac{n-2}{4(n-1)}R_gu+\frac{n(n-2)}{4}u^{\frac{n+2}{n-2}}=0\;\;\mbox{ in }\Omega,$$
for $3\leq n\leq 5$. C. C. Chen and C. S. Lin in a series of works, \cite{MR1466584, MR1642113, MR1679784, MR1737506}, have studied local singular solutions to the prescribed scalar curvature equation
$$\Delta u+K(x)u^{\frac{n+2}{n-2}}=0,$$
where $K$ is a positive $C^1$ function defined on a neighborhood around zero. They have studied conditions on the function $K$ such that the Fowler solution still serve as asymptotic models. In the spirit of the previous works, the main motivation of this paper is the question that whether these asymptotic result could be extended to the system \eqref{S}. 

The first task to answer this question is to find an asymptotic model to the system. In \cite{DHV}, O. Druet, E. Hebey and J. Vetóis have proved that any nonnegative entire solutions of the system 
\begin{equation}\label{dh}
\Delta u_{i} + |\mathcal{U}|^{\frac{4}{n-2}}u_{i} = 0 \quad \mbox{in} \quad  \mathbb R^{n},
\end{equation}
is of the form $U=u\Lambda$, where $\Lambda \in\mathbb S^1_+=\{(x,y)\in\mathbb R^2;x^2+y^2=1\mbox{ with }x,y\geq 0\}$ and $u$ is a entire solution of \eqref{eq021}. This result motivated us to prove the classification result to the limit system, Theorem \ref{class1}, which is the asymptotic model to the system \eqref{S}.

Now let us briefly describe the strategy of the proof of the Theorem \ref{main}, which is inspired on the works \cite{KMPS} and \cite{marques}.  The first step of our proof is to prove the Theorems \ref{RS1} and \ref{class1}. In Section \ref{jf}, based on this theorems we can do a carefully analysis of the Jacobi fields to limit System \eqref{LS} similar to the analysis done in \cite{KMPS}. A key result to the proof of the Theorem \ref{main} is the following fundamental upper bound.
\begin{theorem}\label{14}
Suppose $3\leq n\leq 5$. Assume that $\mathcal{U} = (u_{1},u_{2})$ is a positive solution of $\eqref{S}$ in $\Omega = B^{n}_{1}(0)\backslash \{0\}$. There exists a constant $c>0$ such that 
	\begin{equation}\label{u1}
	|\mathcal{U}(x)| \leq cd_{g}(x,0)^{\frac{2-n}{2}},
	\end{equation}
	for $0 < d_{g}(x,0) <1/2$.
\end{theorem}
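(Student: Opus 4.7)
I would argue by contradiction, following the scalar strategy of Korevaar--Mazzeo--Pacard--Schoen~\cite{KMPS} and Marques~\cite{marques}, now feeding in the system-level classification. Suppose the bound fails along a sequence $x_k \to 0$ with $d_g(x_k,0)^{(n-2)/2}|\mathcal{U}(x_k)|\to\infty$. A standard doubling/selection argument---maximizing the function $y\mapsto (d_g(x_k,0)/2 - d_g(x_k,y))^{(n-2)/2}|\mathcal{U}(y)|$ over the geodesic ball of radius $d_g(x_k,0)/2$ around $x_k$---produces a replacement sequence $y_k \to 0$ with $M_k:=|\mathcal{U}(y_k)|$ and radii $r_k$ such that, setting $\mu_k := M_k^{-2/(n-2)}$, one has $r_k/\mu_k \to \infty$, while $|\mathcal{U}| \leq CM_k$ on the geodesic ball $B^g_{r_k}(y_k)$.

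In geodesic normal coordinates at $y_k$, blow up at scale $\mu_k$ by setting
\[
\mathcal{V}_k(z) = \mu_k^{(n-2)/2}\,\mathcal{U}\bigl(\exp_{y_k}(\mu_k z)\bigr), \qquad g_k(z) = (\exp_{y_k}^{*}g)(\mu_k z).
\]
Then $|\mathcal{V}_k(0)|=1$, $|\mathcal{V}_k|\leq C$ on a ball of radius $r_k/\mu_k\to\infty$, and $\mathcal{V}_k$ solves
\[
-\Delta_{g_k}v_{k,i} + \mu_k^{2}\sum_{j}A_{ij}\bigl(\exp_{y_k}(\mu_k z)\bigr)\,v_{k,j} = \frac{n(n-2)}{4}|\mathcal{V}_k|^{\frac{4}{n-2}}v_{k,i}.
\]
Since $g_k \to g_{\mathrm{eucl}}$ in $C^{2}_{\mathrm{loc}}$ and $\mu_k^{2}A \to 0$, elliptic regularity yields a $C^{2}_{\mathrm{loc}}$-convergent subsequence with nonnegative limit $\mathcal{V}$ solving~\eqref{LS}, either on $\mathbb{R}^{n}$ (if $d_g(y_k,0)/\mu_k \to \infty$) or on $\mathbb{R}^{n}\setminus\{p_\infty\}$ (otherwise), with $|\mathcal{V}(0)|=1$.

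In the entire case, the Druet--Hebey--V\'etois classification~\cite{DHV} forces $\mathcal{V}=\phi\Lambda$ for a standard Aubin--Talenti bubble $\phi$ and some $\Lambda \in \mathbb{S}^{1}_{+}$; in the singular case, Theorem~\ref{class1} gives $\mathcal{V}=u_{0}\Lambda$ for a Fowler solution $u_{0}$. To close the contradiction I would apply a local Pohozaev-type identity to $\mathcal{U}$ on small geodesic spheres and push it through the blow-up. The leading term is determined by the profile $(\mathcal{V},\Lambda)$ together with the scalar curvature of $g$ and the trace of $A$ at $0$---which hypothesis~\ref{H2} reduces to $f(0)$---while the error is of order $\mu_k^{\alpha}$ for some $\alpha>0$, precisely under the restriction $n\leq 5$, as in~\cite{marques}. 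The main obstacle is the Pohozaev balancing in the coupled setting: one must sum the componentwise identities for $u_{1}$ and $u_{2}$, invoke hypothesis~\ref{H1} to keep cross terms of the correct sign, and use hypothesis~\ref{H2} to reduce the residual quadratic form in $\Lambda$ to a scalar multiple of $|\Lambda|^{2}=1$. The nonvanishing of this scalar multiple, combined with $|\mathcal{V}(0)|=1$, produces the contradiction and the claimed upper bound.
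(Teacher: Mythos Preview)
Your blow-up and selection steps are essentially correct and match the paper's opening move. The gap is in how you close the contradiction. You propose a local Pohozaev identity, but this does not produce a contradiction here: after rescaling, the equation converges to the flat limit system with no potential, and the Pohozaev integral of an Aubin--Talenti bubble is simply zero, with no ``leading error term'' forced to be nonvanishing. There is no curvature or potential quantity at the limit point that is assumed nonzero, so the balancing you describe has nothing to contradict; the ``scalar multiple'' you invoke is not identified and in fact vanishes for the Euclidean model. Your reference to~\cite{marques} is also misplaced: Marques's upper bound is proved by moving planes on the cylinder, not by Pohozaev---the Pohozaev identity enters his paper only later, for the removable-singularity theorem. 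Consequently the dimensional restriction $3\le n\le 5$ does not arise from Pohozaev error estimates as you suggest.

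The paper's actual argument, after the same blow-up producing a standard bubble as limit, recenters at a nondegenerate maximum, passes to cylindrical coordinates $v_{i,k}(t,\theta)=|y|^{(n-2)/2}\tilde u_{i,k}(y)$, and applies the Alexandrov reflection across spheres $\{t=\lambda\}\times\mathbb S^{n-1}$. Because the background metric and potential are nontrivial, the reflected function fails to be a supersolution by an error $Q_\lambda$ of size $c_1\varepsilon_k^{2}e^{(n-6)t/2}e^{(2-n)\lambda}$. The heart of the proof is the construction of an explicit radial barrier $h_\lambda$ absorbing this error while satisfying $h_\lambda(-\log(\delta\varepsilon_k^{-1}))\le c\max\{\varepsilon_k^{(n-2)/2},\varepsilon_k^{(6-n)/2-\gamma}\}$; this barrier exists with the required boundary smallness precisely when $n\le 5$. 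A sliding argument then forces $w_{\lambda_0}=v-v_{\lambda_0}-h_{\lambda_0}$ to vanish at the outer boundary for some finite $\lambda_0$, yielding a uniform positive lower bound for $v$ there that contradicts $\varepsilon_k\to 0$. Note finally that neither~\ref{H1} nor~\ref{H2} is used in this step---the paper remarks explicitly that cooperativity is not needed for the upper bound---so your appeal to them is another sign that the mechanism you have in mind is not the right one.
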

We use the Method of Moving Planes to prove this upper bound. Also, like in \cite{marques} the difficulty here is the fact that our system \eqref{S} has no symmetries. Inspired by the work of F. C. Marques \cite{marques} we can overcome this difficulty at least for lower dimensions. As a consequence, if we assume that the potential satisfies \ref{H1}, we prove a uniform spherical Harnack inequality around the singularity, see Corollary \ref{DesHarnack}.

As a consequence of the upper bound \eqref{u1} we are able to define a Pohozahev-type invariant as
\begin{equation*}
P(\mathcal{U}) = \lim_{r\rightarrow \infty} P(r,\mathcal{U}),
\end{equation*}
where
\begin{equation}\label{P1}
P(r,\mathcal{U})= \displaystyle \int_{\partial B_{r}}\left(\frac{n-2}{2}\left\langle \mathcal U,\frac{\partial \mathcal U}{\partial \nu}\right\rangle- \frac{r}{2}|\nabla \mathcal U|^{2}+ r\left|\frac{\partial \mathcal U}{\partial\nu}\right|^{2} \right.\left.+\displaystyle r\frac{(n-2)^2}{8}|\mathcal{U}|^{\frac{2n}{n-2}}\right)d\sigma.\end{equation}

Inspired by the works of L. Caffarelli, B. Gidas and J. Spruck in \cite{CGS}, we prove a removable singularity theorem, proving that this invariant is always nonpositive and it is equal to zero if and only if the singularity is removable. More precisely, we prove the following result

\begin{theorem}\label{t5} Assume $3 \leq n \leq 5$ and that the potential $A$ satisfies the hypotheses \ref{H1} and \ref{H2}. Let $\mathcal{U}$ be a positive solution to the system \eqref{S} in $B^{n}_{1}(0)\backslash\{0\}$. Then $P(\mathcal{U}) \leq 0$. Moreover, $P(\mathcal{U}) = 0$ if and only if the origin is a removable singularity.
\end{theorem}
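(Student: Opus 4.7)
The strategy follows the Pohozaev balance of Caffarelli--Gidas--Spruck \cite{CGS}, adapted to an inhomogeneous scalar equation by Marques \cite{marques}, extended here to the coupled system \eqref{S}. The central identity is an almost-monotonicity formula for $r \mapsto P(r, \mathcal{U})$ whose error is controlled by the lower-order perturbations encoded in $A$ and in the metric $g$.

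\textbf{Step 1: Pohozaev identity.} Test each equation of \eqref{S} against the conformal test $\langle x, \nabla u_i \rangle + \tfrac{n-2}{2} u_i$, sum over $i = 1, 2$, and integrate by parts over an annulus $B_{r_1} \setminus B_{r_2}$ with $0 < r_2 < r_1 < 1$. The critical nonlinearity $|\mathcal{U}|^{4/(n-2)}u_i$ paired with this test field is a total divergence, so its integral contributes only to the boundary functional $P$. One arrives at
\begin{equation*}
P(r_1, \mathcal{U}) - P(r_2, \mathcal{U}) = \int_{B_{r_1} \setminus B_{r_2}} \mathcal{E}(x, \mathcal{U}, \nabla \mathcal{U}) \, dx,
\end{equation*}
where $\mathcal{E}$ collects contributions from the potential $A$ and from the deviation $g - \delta$ from the Euclidean metric.

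\textbf{Step 2: Error estimates.} By Theorem \ref{14} and the spherical Harnack inequality from Corollary \ref{DesHarnack}, combined with standard interior gradient estimates, one has $|\mathcal{U}(x)| \leq c |x|^{(2-n)/2}$ and $|\nabla \mathcal{U}(x)| \leq c |x|^{-n/2}$. Inserting these bounds into $\mathcal{E}$ gives $|\mathcal{E}(x)| \leq C |x|^{2-n}$ for the $A$-terms and $|\mathcal{E}(x)| \leq C |x|^{3-n}$ for the metric terms. Both are integrable near the origin, so $P(\mathcal{U}) := \lim_{r \to 0^+} P(r, \mathcal{U})$ exists and is finite.

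\textbf{Step 3: Identification via blow-up.} Rescale $\mathcal{U}_r(y) := r^{(n-2)/2} \mathcal{U}(r y)$. The uniform upper bound and Harnack yield $C^2_{\mathrm{loc}}(\mathbb{R}^n \setminus \{0\})$-convergence, along some sequence $r_k \to 0$, to a nonnegative solution $\mathcal{U}_\infty$ of the limit system \eqref{LS} with a non-removable singularity at the origin (the latter because $\mathcal{U}$ is assumed singular and the rescaled spherical averages are uniformly bounded below). By Theorem \ref{class1}, $\mathcal{U}_\infty = u_0 \Lambda$ is a Fowler-type solution. Scale-invariance of the pure critical Pohozaev functional and Step 2 give $P(r_k, \mathcal{U}) = P(1, \mathcal{U}_{r_k}) + o(1) \to P(1, u_0 \Lambda)$. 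A direct computation in log-cylindrical coordinates, using the periodicity of $u_0$ and the Hamiltonian conservation of its Fowler ODE, shows $P(1, u_0 \Lambda) \leq 0$ with equality iff $u_0 \equiv 0$. Consequently $P(\mathcal{U}) \leq 0$.

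\textbf{Step 4: Removable case.} If $P(\mathcal{U}) = 0$ then the blow-up above satisfies $\mathcal{U}_\infty \equiv 0$, so $r^{(n-2)/2}|\mathcal{U}(rx)| \to 0$ uniformly on spheres. By the spherical Harnack inequality this strengthens to $|\mathcal{U}(x)| = o(|x|^{(2-n)/2})$ as $x \to 0$. A Brezis--Kato / Moser iteration applied to the linear system $-\Delta_g \mathcal{U} + A \mathcal{U} = V \mathcal{U}$ with $V := \tfrac{n(n-2)}{4}|\mathcal{U}|^{4/(n-2)}$ then removes the singular weight, giving $|\mathcal{U}| \leq C$ near $0$, and elliptic regularity extends $\mathcal{U}$ smoothly across the origin. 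The converse is immediate from \eqref{P1}: if $\mathcal{U}$ is smooth at $0$, every term of $P(r, \mathcal{U})$ is $O(r^{n-1})$ on $\partial B_r$, so $P(r, \mathcal{U}) \to 0$ as $r \to 0$.

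\textbf{Main obstacle.} The delicate point is Step 2, where one needs not merely integrability of $\mathcal{E}$ but enough sign control so that its integral does not push $P(\mathcal{U})$ above the Fowler value. Hypothesis \ref{H1} is used to handle the off-diagonal part of $A$ after symmetrizing $A_{ij}(u_j T_i + u_i T_j) = A_{ij}\big(\langle x, \nabla(u_i u_j)\rangle + (n-2) u_i u_j\big)$ and integrating by parts; hypothesis \ref{H2} reduces the diagonal part in the borderline dimension $n = 5$ to a scalar $f \cdot \langle x, \nabla |\mathcal{U}|^2\rangle$ plus an $O(|x|)$ remainder, and the integrability of that remainder against $|\mathcal{U}|^2 |\nabla \mathcal{U}|$ requires precisely $n \leq 5$.
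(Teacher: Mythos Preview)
Your Steps 1--3 reproduce the paper's Pohozaev identity (Lemma \ref{lemPI}), the error bound \eqref{23}, and the blow-up argument of Lemma \ref{n0}, and they do yield $P(\mathcal U)\le 0$ (note, however, that the parenthetical ``the rescaled spherical averages are uniformly bounded below'' is unjustified and unnecessary: if the blow-up limit is smooth or zero one still has $P(\mathcal U_\infty)=0\le 0$).

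The genuine gap is Step 4. From $P(\mathcal U)=0$ you conclude that the blow-up limit $\mathcal U_\infty$ is identically zero. This is not true: $P(\mathcal U_\infty)=0$ only says, via Corollary \ref{classical}, that $\mathcal U_\infty$ extends smoothly across the origin, so it may be a nontrivial bubble $\big(2\lambda/(1+\lambda^2|y-a|^2)\big)^{(n-2)/2}\Lambda$. In that case $r_k^{(n-2)/2}|\mathcal U(r_k\theta)|$ stays bounded away from zero and you cannot infer $|\mathcal U(x)|=o(|x|^{(2-n)/2})$. Even if along \emph{some} sequence the limit were zero, you would only obtain $\liminf |x|^{(n-2)/2}|\mathcal U(x)|=0$ (this is exactly Lemma \ref{n0}), not the full limit required for Lemma \ref{l2}. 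Ruling out oscillation between bubble-like regions and thin necks is precisely the hard part of the theorem.

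The paper's argument here is substantially different from what you sketch. Assuming $P(\mathcal U)\ge 0$ and $\limsup |x|^{(n-2)/2}u(x)>0$, one locates local minima $t_k$ of $w(t)=r^{(n-2)/2}\bar u(r)$ with $w(t_k)\to 0$, flanked by levels $w(\bar t_k)=w(t_k^*)=\varepsilon_0$. Lemma \ref{n1} (this is where \ref{H2} is actually used) analyses the rescaled limits at the necks and shows the harmonic coefficients satisfy $a_1b_2=a_2b_1$, hence $\langle a,b\rangle\ne 0$; Lemma \ref{n2} then gives $w^2(t_k)\le c|P(r_k,\mathcal U)|\le c\int|\mathcal E|$. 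The Pohozaev error is split over $B_{r_k}\setminus B_{r_k^*}$ and $B_{r_k^*}$, and controlled via sharp ODE comparison estimates \eqref{27}--\eqref{28} on the interval lengths $t_k-\bar t_k$ and $t_k^*-t_k$. The contradiction comes from the inequality $\bar t_k\le\big(\tfrac{n-6}{n-2}+c\delta\big)t_k^*+c$, whose leading coefficient is negative exactly when $3\le n\le 5$. Thus the dimensional restriction enters in this neck-length balance, not in the mere integrability of $\mathcal E$; \ref{H1} enters through the Harnack inequality (Corollary \ref{DesHarnack}); and \ref{H2} enters through the non-orthogonality of $a$ and $b$ in Lemma \ref{n1}. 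Your ``Main obstacle'' paragraph misplaces all three.
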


  An important consequence of this result is the
  \begin{corollary}\label{cc1}
   Assume $3 \leq n \leq 5$ and that the potential $A$ satisfies the hypotheses \ref{H1} and \ref{H2}. Let $\mathcal{U}$ be a positive solution to the system \eqref{S} in $B^{n}_{1}(0)\backslash\{0\}$. If the origin is a nonremovable singularity, then there exists $c > 0$ such that
\begin{equation}\label{l1}
	|\mathcal{U}(x)| \geq cd_{g}(x,0)^{\frac{2-n}{2}}
\end{equation}
	for each $i$ and $0 < d_{g}(x,0) < 1/2$.	
\end{corollary}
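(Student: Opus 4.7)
The strategy is a proof by contradiction built around the Pohozaev invariant. By Theorem~\ref{t5}, the non-removability of the singularity forces $P(\mathcal{U}) < 0$, so it suffices to exhibit a sequence $r_k \to 0$ along which $P(r_k, \mathcal{U}) \to 0$. Suppose the lower bound \eqref{l1} fails: then there is a sequence $x_k \to 0$ with $r_k^{(n-2)/2}\,|\mathcal{U}(x_k)| \to 0$, where $r_k := d_g(x_k, 0)$. The spherical Harnack inequality (Corollary~\ref{DesHarnack}), applied on the geodesic sphere of radius $r_k$, upgrades this pointwise smallness to
\begin{equation*}
r_k^{(n-2)/2}\sup_{d_g(x,0) = r_k}|\mathcal{U}(x)| \;\longrightarrow\; 0.
\end{equation*}

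Next I would rescale at scale $r_k$ by setting $v_k(y) := r_k^{(n-2)/2}\mathcal{U}(r_k y)$ on the fixed annulus $A = \{y : 1/2 \le |y| \le 2\}$ (in $g$-normal coordinates). The critical scaling turns \eqref{S} into
\begin{equation*}
-\Delta_{g_k} v_{k,i} + r_k^{2}\sum_{j=1}^{2} A_{ij}(r_k y)\,v_{k,j} \;=\; \tfrac{n(n-2)}{4}\,|v_k|^{\frac{4}{n-2}} v_{k,i}, \qquad i=1,2,
\end{equation*}
with $g_k(y) := g(r_k y)$ converging smoothly to $g(0)$ on $A$ and the lower-order $A$-contribution vanishing. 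Theorem~\ref{14} supplies a uniform $L^{\infty}$ bound for $v_k$ on $A$, so standard elliptic regularity yields (up to a subsequence) $v_k \to v$ in $C^{2}_{\mathrm{loc}}(A)$, where $v$ is a nonnegative solution of the model limit system \eqref{LS} on $A$. The display above gives $v_k \to 0$ uniformly on $\{|y| = 1\}$, hence $v \equiv 0$ there. Since each component $v_i \ge 0$ satisfies $-\Delta v_i = \tfrac{n(n-2)}{4}|v|^{\frac{4}{n-2}}v_i \ge 0$ in the limit, the strong maximum (minimum) principle for superharmonic functions extends $v \equiv 0$ to all of $A$, so in fact $v_k \to 0$ in $C^{2}_{\mathrm{loc}}(A)$.

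Finally, the integrand defining $P(r,\mathcal{U})$ in \eqref{P1} is invariant under the critical rescaling $u \mapsto r^{(n-2)/2}u(r\,\cdot)$, so the change of variables $x = r_k y$ transforms $P(r_k,\mathcal{U})$ into the same expression evaluated on $v_k$ over $\partial B_1$. Since $v_k \to 0$ in $C^{2}$ in a neighborhood of $\{|y| = 1\}$, every term appearing there, namely $\langle v_k, \partial_\nu v_k\rangle$, $|\nabla v_k|^{2}$, $|\partial_\nu v_k|^{2}$, and $|v_k|^{\frac{2n}{n-2}}$, tends to $0$. Therefore $P(r_k,\mathcal{U}) \to 0$, contradicting $P(r_k,\mathcal{U}) \to P(\mathcal{U}) < 0$ and completing the proof.

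The most delicate step, in my view, is the identification of the blow-up limit: one must ensure that the $C^{2}_{\mathrm{loc}}$ compactness produced by the upper bound of Theorem~\ref{14} really survives the metric and matrix-$A$ perturbations, so that the limit $v$ is a genuine nonnegative solution of the clean model system \eqref{LS} to which the strong maximum principle applies componentwise. The low-dimensional hypothesis $n \le 5$ and the structural assumptions \ref{H1}--\ref{H2} enter implicitly here through the upper bound and the Pohozaev machinery of Theorems~\ref{14} and \ref{t5} on which the argument rests.
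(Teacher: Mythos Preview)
Your proof is correct, and it takes a cleaner route than the paper's. The paper argues via the spherically averaged quantity $w(t)=r^{(n-2)/2}\overline{u}(r)$: assuming the lower bound fails gives $\liminf w=0$, while nonremovability together with Lemma~\ref{l2} forces $\limsup w>0$, so one can select local minima $t_k$ with $w'(t_k)=0$ and $w(t_k)\to 0$; an explicit evaluation of $P(r_k,\mathcal U)$ in terms of $w(t_k)$, $w'(t_k)$ and $|W(t_k)|$ then shows $P(\mathcal U)=0$, contradicting Theorem~\ref{t5}. Your argument bypasses the selection of critical points and the ODE-type computation entirely: the componentwise spherical Harnack inequality plus Theorem~\ref{14} give $C^2$ compactness of the rescalings $v_k$ on a fixed annulus, the strong minimum principle forces every subsequential limit to vanish, and the exact scaling invariance of the Euclidean Pohozaev integrand yields $P(r_k,\mathcal U)=P(1,v_k)\to 0$ directly. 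This is essentially the blow-up mechanism already used in the proof of Lemma~\ref{n0}, run in the opposite direction. One cosmetic point: the identity $P(r_k,\mathcal U)=P(1,v_k)$ is exact under \emph{Euclidean} rescaling (since $P$ in \eqref{P1} is built from Euclidean quantities), whereas you phrase the rescaling in $g$-normal coordinates; the discrepancy is $o(1)$ as $r_k\to 0$, so the conclusion is unaffected.
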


\begin{remark}
We can see that Theorems \ref{RS1}, \ref{class1} and  \ref{14} hold for systems with any number of equations by using easy modifications of our proofs.
\end{remark}

The paper is organized as follows. In Section \ref{clre} we use the Method of Moving Planes to prove that singular solutions of \eqref{LS} are radially symmetric. Therefore System \eqref{LS} is equivalent to a ODE system which is crucial in our argument to prove the classification result, Theorem \ref{class1}. We also study growth properties of the Jacobi fields in order to obtain the asymptotic behavior of singular solutions of \eqref{S}. In Section \ref{pb} we prove the upper bound, Theorem \ref{14}, Theorem \ref{t5} and the lower bound, Corollary \ref{cc1} to solutions of \eqref{S}. In Section \ref{crs} we conclude the prove of Theorem \ref{main}. Precisely, by using a delicate argument, originally due to Leon Simon and applied in \cite{KMPS} and \cite{marques}, we apply the estimates \eqref{u1} and \eqref{l1} to prove that any solution of our System \eqref{S} is asymptotically to some Fowler-type solution.

\noindent \textbf{Acknowledgments:}
\section{Classification result for the limit system}\label{clre}
Our main goal in this section is to study some properties of singular nonnegative solutions $\mathcal{U} = (u_{1},u_{2})$ to the following system
\begin{equation}\label{eq025}
\Delta u_{i} + \frac{n(n-2)}{4}| \mathcal{U} |^{\frac{4}{n-2} }u_{i}=0 \quad \mbox{in} \quad \mathbb R^{n}\setminus \{0\}.
\end{equation}
Similarly to the definition for the system $\eqref{S}$ we say that a solution $\mathcal U$ of \eqref{eq025} is singular if
$
\lim_{|x|\rightarrow 0} |\mathcal{U}(x)| = +\infty.
$
It may happen that only one of the coordinates blows up at the singularity.
The above system will be important in our analysis since its solutions will play a similar role to Fowler solutions in the singular Yamabe problem. 

One of our main result in this section it is the characterization of nonnegative singular solutions of the limit System \eqref{eq025}. We will show that any solution of this system is a multiple of a vector in the unit sphere by a classical Fowler solution, see Theorem \ref{class} below.
\subsection{Radial Symmetry}

The first step to reach our goal is to show that any solution of the system \eqref{eq025} is radially symmetric. To this end we will use the Method of Moving Planes.

\begin{theorem}[Radial Symmetry]\label{RS} Let $n \geq 3$ and $\mathcal{U} = (u_{1},u_{2})$ be a nonnegative $C^2$ singular solution of the nonlinear elliptic system \eqref{eq025}. Then $\mathcal{U}$ is radially symmetric about the origin.
\end{theorem}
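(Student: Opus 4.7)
The plan is to apply the method of moving planes simultaneously to both components $u_1$ and $u_2$, exploiting the fact that the system is cooperative: the map $F_i(\mathcal{U}) = \frac{n(n-2)}{4}|\mathcal{U}|^{4/(n-2)} u_i$ satisfies $\partial F_i / \partial u_j \geq 0$ for $i \neq j$ on the nonnegative quadrant. This monotonicity is what permits comparison of $\mathcal{U}$ with its reflections via a maximum principle for weakly coupled linear elliptic systems. Since $\mathcal{U}$ has no a priori decay at infinity, I would introduce the Kelvin transform $\bar{u}_i(x) = |x|^{2-n} u_i(x/|x|^2)$; the critical conformal nature of the nonlinearity ensures that $\bar{\mathcal{U}}$ solves the same system on $\mathbb R^n \setminus \{0\}$, with the singularity at $0$ translated to prescribed behavior at $\infty$ and conversely.

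Fix a unit direction $e \in \mathbb{S}^{n-1}$; set $\Sigma_\lambda = \{x \cdot e > \lambda\}$, $T_\lambda = \partial \Sigma_\lambda$, and $x^\lambda$ the reflection of $x$ across $T_\lambda$. Define $w_i^\lambda(x) = u_i(x^\lambda) - u_i(x)$ on $\Sigma_\lambda$. By the mean value theorem, the pair $(w_1^\lambda, w_2^\lambda)$ satisfies a linear cooperative system
\begin{equation*}
\Delta w_i^\lambda + \sum_{j=1}^{2} c_{ij}^\lambda(x)\, w_j^\lambda = 0, \quad i=1,2,
\end{equation*}
with $c_{ij}^\lambda \geq 0$ for $j \neq i$. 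For $\lambda > 0$ sufficiently large, the reflected singularity at $2\lambda e \in \Sigma_\lambda$ forces $u_i(x^\lambda) \to +\infty$ as $x \to 2\lambda e$, while $u_i(x)$ stays locally bounded; combined with control of $u_i$ at infinity supplied by $\bar{u}_i$ near $0$, this yields the starting inequality $w_i^\lambda \geq 0$ on $\Sigma_\lambda$. Let $\bar{\lambda}$ be the infimum of such $\lambda$. A continuity argument gives $w_i^{\bar{\lambda}} \geq 0$; if $\bar{\lambda} > 0$, the strong maximum principle for cooperative systems applied jointly to $(w_1^{\bar{\lambda}}, w_2^{\bar{\lambda}})$ gives either $w_i^{\bar{\lambda}} \equiv 0$ (ruled out because the origin forces a genuine asymmetry at $2\bar{\lambda} e$) or $w_i^{\bar{\lambda}} > 0$ in $\Sigma_{\bar{\lambda}}$ for each $i$, and the Hopf lemma plus a compactness argument lets the plane slide strictly past $\bar{\lambda}$, contradicting its definition. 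Hence $\bar{\lambda} \leq 0$, so $u_i(x^0) \geq u_i(x)$ on $\Sigma_0$; repeating with $-e$ yields the reverse inequality, giving symmetry across $e^\perp$. Since $e$ is arbitrary, $\mathcal{U}$ is radially symmetric about $0$.

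The main obstacle will be the starting step: establishing $w_i^\lambda \geq 0$ on the entire unbounded half-space $\Sigma_\lambda$ for $\lambda$ large, despite no uniform decay of $u_i$ at infinity. Here the Kelvin transform is essential, since it converts the behavior of $u_i$ near infinity into behavior of $\bar{u}_i$ near its singular point, where the blow-up of the reflected values beats the growth of the original. One must also carefully handle the point $0$, which acts as the \emph{anchor} that prevents the plane from sliding past the hyperplane through the origin; this is exactly why symmetry is forced with respect to hyperplanes through $0$ and not others. Once the joint comparison for the cooperative pair $(w_1^\lambda, w_2^\lambda)$ is in place, the sliding mechanism proceeds as in the scalar argument of Caffarelli--Gidas--Spruck.
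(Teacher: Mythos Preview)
Your overall strategy---cooperative structure of the nonlinearity, moving planes applied jointly to $(w_1^\lambda,w_2^\lambda)$, strong maximum principle for systems, Hopf lemma---is correct and is exactly the mechanism the paper uses. The gap is in the starting step, and it comes from the choice of Kelvin transform. The inversion you write down, $\bar u_i(x)=|x|^{2-n}u_i(x/|x|^2)$, is centered at the origin: it merely swaps the behavior of $u_i$ at $0$ with its behavior at $\infty$. Since nothing is known a priori about $u_i$ at infinity (and indeed a Fowler-type solution behaves like $|x|^{(2-n)/2}$ at both ends), $\bar u_i$ can be just as singular at $0$ as $u_i$ was, and you have gained no control. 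Your sentence ``control of $u_i$ at infinity supplied by $\bar u_i$ near $0$'' is therefore circular, and neither $u_i$ nor $\bar u_i$ has the decay on $\Sigma_\lambda$ needed to get the plane moving.

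The paper (following \cite{CGS}) instead centers the Kelvin transform at an arbitrary \emph{regular} point $z\neq 0$, setting $U_i(x)=|x|^{2-n}u_i(z+x/|x|^2)$. Because $u_i$ is smooth near $z$, one gets the harmonic expansion $U_i(x)=u_i(z)|x|^{2-n}+O(|x|^{1-n})$ at infinity, which is precisely the decay required by Lemma~2.3 of \cite{CGS} to start the moving plane. The price is that $U_i$ acquires two singularities, at $0$ and at $z_0=-z/|z|^2$; one then moves planes orthogonal to the axis through these two points and proves axial symmetry about that line. Since $z$ was arbitrary, radial symmetry of $u_i$ about the origin follows. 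A smaller point: the hypothesis only says $|\mathcal U|\to\infty$ at $0$, so possibly only one coordinate blows up there; the paper singles out that coordinate to verify the reflected function cannot coincide with the original (which is what forces $\lambda^*=0$), whereas your phrasing ``forces $u_i(x^\lambda)\to+\infty$'' tacitly assumes it for every $i$.
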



\begin{proof} The proof will follow the argument in \cite{CGS} and \cite{CL}. Without loss of generality we may assume that $\displaystyle\lim_{|x|\rightarrow 0}u_1(x) = +\infty$. 
	Fix an arbitrary $z \neq 0$ in $\mathbb R^{n}$. Define the Kelvin transform 
	\begin{equation*}
	U_{i}(x) = \frac{1}{|x|^{n-2}}u_{i}\left( z + \frac{x}{|x|^{2}}\right), \;\;\mbox{ for }i=1,2.
	\end{equation*}
	 We see that $U_1$ is singular at zero and $z_0=-z/|z|^2$ and $U_2$ is singular at zero. Moreover, the Kelvin transform satisfies 
	\begin{equation}\label{LLS}
	\begin{aligned}
	\Delta U_i   + & \; \frac{n(n-2)}{4}|\mathbf{U}|^{\frac{4}{n-2}}U_i=0   \quad  & \mbox{in} & \quad \mathbb{R}^n\setminus\{0,z_{0}\}.
	\end{aligned}
	\end{equation}
	It is easy to see that
	$$U_i(x)=O(|x|^{2-n})\;\;\mbox{ as }\;|x|\rightarrow\infty$$
	and that each $U_i$ has the following harmonic asymptotic expansion at infinity
	$$\begin{array}{lll}
	U_i(x)  & =  & a_{i0}|x|^{2-n}+a_{ij}x_j|x|^{-n}+O(|x|^{-n}),\\
	\partial_{x_j}U_i  &=  & (2-n)a_{i0}x_j|x|^{-n}+O(|x|^{-n}),\\
	\partial_{x_k x_j}^2U_i &= & O(|x|^{-n}),
	\end{array}$$
	where $a_{i0}=u_i(z)$ and $a_{ij}=\partial_{y_j}u_i(z)$.

	We will show that $U_{i}$ are axisymmetric with respect to the axis going through 0 and $z$. Choose a reflection direction orthogonal to this axis and assume without loss of generality that it is equal to the positive $x_{n}$ direction $(0,\ldots,0,1)$. For $\lambda > 0$ consider $\Sigma_{\lambda} :=\{ x \in \mathbb R^{n}: x_{n} > \lambda\}$ and the reflection 
	\begin{equation*}
	x = (x_{1},\ldots,x_{n-1},x_{n}) \in \Sigma_{\lambda}\mapsto x_{\lambda} = (x_{1},\ldots,x_{n-1}, 2\lambda - x_{n}).
	\end{equation*}
	
	Using the asymptotic expansion at infiniy for $U_i$ and Lemma 2.3 in \cite{CGS}, we see that there exist positive constants $\overline{\lambda}$ and $R$ such that for any $\lambda \geq \overline{\lambda}$ we have
	\begin{equation}\label{p11}
	U_{i}(x) < U_{i}(x_{\lambda}), \mbox{ for } x \in \Sigma_{\lambda} \mbox{ and } |x_\lambda| > R.
	\end{equation}
	By Lemma 2.1 in \cite{ChenLin} there exists a constant $C>0$ such that 
	\begin{equation}\label{p2}
	U_{i}(x)\geq C, \ \ \ \mbox{for} \ \ \ x \in \overline{B_{R}} \setminus \{0,z_{0}\}. 
	\end{equation}
	
	Since $U_{i}(x) \rightarrow 0$  as $|x| \rightarrow + \infty$, then by \eqref{p11} and \eqref{p2} we have that there exists $\lambda_{0} > \overline{\lambda}$ such that when $|x|\geq 2\lambda_{0} - R$ it holds $U_{i}(x) < C $. On the other hand if $x\in \Sigma_{\lambda}$ and $x_{\lambda} \in \overline{B}_{R}$ then $x \notin B_{2\lambda - R}$. Thus for every $\lambda \geq \lambda_{0}$ we have 
	\begin{equation}\label{tree}
	U_{i}(x) \leq  U_{i}(x_{\lambda}), \mbox{ for all }  x \in \Sigma_{\lambda} \mbox{ and }  x_{\lambda} \not\in \{0,z_{0}\}.
	\end{equation}  
	Let 
	\begin{equation*}
	\lambda^{*} :=\inf\{ \tilde{\lambda} > 0;  \eqref{tree} \mbox{ holds for all } \lambda \geq \tilde{\lambda} \}.
	\end{equation*}
	It suffices to show that $\lambda^{*} = 0$. Indeed, this proves that $U_{i}$ is axisymmetric with respect to the axis going through $0$ and $z$ and since $z$ is arbitrary, each $u_{i}$ is radially symmetric about the origin.
	
	Suppose by contradiction that $\lambda^{*} > 0$. Then \eqref{tree} holds for $\lambda = \lambda^{*}$. Since $U_{1}$ goes to infinity when $x$ goes to $z_{0}$ we see that $U_{1}$ cannot be invariant by the reflection $x_{\lambda^{*}}$. Then by applying the maximum principle to $U_{i}(x_{\lambda^{*}}) - U_{i}(x)$ in $\eqref{LLS}$, we conclude that 
	\begin{equation}\label{four}
	U_{i}(x) < U_{i}(x_{\lambda^*}), \mbox{ for }  x \in \Sigma_{\lambda^*} \mbox{ and }  x_{\lambda^*} \not\in \{0,z_{0}\}.
	\end{equation}
	Note that $0,z_0\not\in\partial\Sigma_{\lambda^*}$, since $\lambda^*>0$.Then applying the Hopf boundary lemma for $x \in \partial\Sigma_{\lambda^{*}}$ we obtain
	\begin{equation}\label{five}
	\partial_{x_{n}}(U_{i}(x_{\lambda^{*}}) - U_{i}(x)) = -2\partial_{x_{n}} U_{i}(x) > 0.
	\end{equation}
	
	Now choose sequences $\lambda_{j}\nearrow \lambda^{*}$ and  $x^{j} \in \Sigma_{\lambda_{j}}$ such that $U_{1}(x^{j}_{\lambda_{j}}) < U_{1}(x^{j})$. By Lemma 2.4 in \cite{CGS}, we conclude that the sequence $|x^{j}|$ is bounded. Hence passing to a subsequence we may assume that $x_{j} \rightarrow \overline{x} \in \overline{\Sigma_{\lambda^{*}}}$ with $U_{1}(\overline{x}_{\lambda^{*}}) \leq U_{1}(\overline{x})$. By \eqref{four} we know that $\overline{x} \in \partial\Sigma_{\lambda^{*}}$ and then $\partial_{x_{n}} U_{1}(\overline{x}) \geq 0$, a contradiction with \eqref{five}. Therefore, $\lambda^{*} = 0$.
\end{proof}


Since we proved the radial symmetry of the solutions we will use a change of variables to approach our problem using ODE analysis. This is the subject from the next subsection. 
\subsection{ODE Analysis}

If $\mathcal{U} = (u_{1}, u_{2})$ is a solution of the system $\eqref{RS}$, then by Theorem \ref{RS} we have that $u_{i}(x) $ depends only on $|x|$. Let 
\begin{equation*}
v_{i}(t) := e^{-\delta t}u_{i}(e^{-t}\theta),\;\;i=1,2,
\end{equation*}
where $t = -\ln r$, $\theta=x/|x|$ and $\delta = (n-2)/2$. Then, it is easy to see that $V= (v_{1},v_{2})$ satisfies the ordinary differential equation
\begin{equation}\label{ODELS}
v_{i}'' - \delta^{2}v_{i} + \frac{n(n-2)}{4}|V|^{\frac{4}{n-2}}v_{i} = 0,\;\;i=1,2,
\end{equation}
with $t\in\mathbb R$, $v_i\geq 0$ and $v_i\in C^2(\mathbb R)$. Setting $w_i:=v_i'$ this ODE system transformed into a first order Hamiltonian system
$$\left\{\begin{array}{rcl}
v_i' & = & w_i\\
w'_i & = & \displaystyle\delta^2v_i-\frac{n(n-2)}{4}|V|^{\frac{4}{n-2}}v_i,
\end{array},\;\;i=1,2.
\right.$$
Setting $W=(w_1,w_2)$, we can see that the Hamiltonian energy given by
$$H(V,W)=\frac{1}{2}\left(|W|^2-\delta^2|V|^2+\delta^2|V|^{\frac{2n}{n-2}}\right),$$
is constant along solutions of \eqref{ODELS}. It does not seem possible to integrate completely the ODE \eqref{ODELS}, as in the classical case. Besides the Hamiltonian energy above, it does not seem possible to find other quantities which are preserved along the trajectories of \eqref{ODELS}. This is the reason why we will rely on the following approach to prove the Theorem \ref{class1}.

Since the Hamiltonian energy above is constant along any solution $V$ of the ODE \eqref{ODELS}, then $  \Psi(t):=H(V(t),V'(t))=K$, for some constant $K$.
In fact, by a direct computation, we easily deduce that
\[
  \Psi(t)=\frac{P(r,\mathcal U)}{\sigma_{n-1}},
\]
where $\sigma_{n-1}$ is the volume of the unit sphere $\mathbb S^{n-1}$ and $P(r,\mathcal U)$ is the Pohozaev integral defined in \eqref{P1}.
As a consequence it is easy to see that $V$ and $|V'|$  are bounded.  Indeed, if we suppose that there exists a sequence $t_{n}\rightarrow + \infty$ such that $|V(t_{n})| \rightarrow + \infty$, we have 
\begin{equation*}
\frac{1}{2}\left( \left(\frac{|V'(t_{n})|}{|V(t_{n})|}\right)^{2}-\delta^2\right) + \frac{\delta^2}{2}|V(t_{n})|^{\frac{4}{n-2}} = \frac{K}{|V(t_{n})|^{2}}
\end{equation*}
which is a contradiction. Similarly, we conclude that $|V'|$ is bounded.

The first consequence of the radial symmetry of solutions is the fact that each coordinate $u_i$ satisfies the following dichotomy: either it is strictly positive or it vanishes identically.


\begin{lemma}\label{L1} Let $\mathcal{U}$ be a nonnegative singular solution of the limit system \eqref{eq025}. If there exists $y \in \mathbb R^{n}\backslash \{0\}$ such that $u_{i}(y) = 0$, then $u_{i}\equiv 0$.
\end{lemma}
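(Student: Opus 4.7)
The plan is to reduce to a linear ODE for $v_i$ via the radial symmetry established in Theorem \ref{RS}, and then apply uniqueness of solutions to linear second-order Cauchy problems. By Theorem \ref{RS}, each $u_i$ is a radial function, so defining $v_i(t) = e^{-\delta t} u_i(e^{-t})$ with $t = -\ln|x|$ puts us in the setting of the ODE system \eqref{ODELS}, where $v_i$ satisfies
\begin{equation*}
v_i'' + c(t)\, v_i = 0, \qquad c(t) := \tfrac{n(n-2)}{4}|V(t)|^{\frac{4}{n-2}} - \delta^{2}.
\end{equation*}
A key preliminary observation, already recorded above, is that $|V|$ is bounded on $\mathbb R$ as a consequence of the Hamiltonian conservation law; hence the coefficient $c(t)$ is bounded and continuous, and the ODE above is a bona fide linear second-order equation with locally Lipschitz coefficients.

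Now suppose $u_i(y) = 0$ for some $y \neq 0$, and set $t_0 := -\ln|y|$, so that $v_i(t_0) = 0$. Since $v_i \geq 0$ and $v_i(t_0) = 0$, the point $t_0$ is an interior minimum of $v_i$, forcing $v_i'(t_0) = 0$ as well. Thus $v_i$ solves the above linear ODE with trivial Cauchy data $(v_i(t_0), v_i'(t_0)) = (0,0)$. The zero function is obviously another such solution, so uniqueness for linear second-order ODEs yields $v_i \equiv 0$, and consequently $u_i \equiv 0$ on $\mathbb R^n \setminus \{0\}$.

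There is essentially no obstacle here: the only substantive ingredient is the boundedness of $|V|$ needed to guarantee that the linear ODE has well-behaved coefficients, and this has already been derived from the Hamiltonian identity $\Psi \equiv K$ just above. An alternative one-line argument would invoke the strong maximum principle directly, since $-\Delta u_i = \tfrac{n(n-2)}{4}|\mathcal{U}|^{\frac{4}{n-2}} u_i \geq 0$ makes $u_i$ a nonnegative superharmonic function on the connected set $\mathbb R^n\setminus\{0\}$ (for $n\geq 3$), which must therefore vanish identically once it vanishes at a single interior point. I would, however, present the ODE version to remain consistent with the lemma's framing as a direct consequence of radial symmetry.
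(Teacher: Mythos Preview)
Your proof is correct. Both your argument and the paper's conclude via uniqueness for the second-order ODE satisfied by $v_i$, but the paper first invokes the Chen--Lin maximum principle \cite[Lemma~2.1]{ChenLin} to deduce that $u_i$ vanishes on all of $\{|z|\geq|y|\}$ before passing to the ODE, whereas you go straight to the observation that the interior minimum at $t_0=-\ln|y|$ forces $v_i(t_0)=v_i'(t_0)=0$. Your route is shorter and self-contained, avoiding the external reference; the paper's detour through the maximum principle buys nothing additional here. (A small remark: boundedness of $|V|$ is not actually needed---continuity of $c(t)$ already suffices for uniqueness in a linear second-order ODE, since linearity in the unknown automatically gives the required Lipschitz condition.) Your alternative superharmonic argument is also valid and is arguably the cleanest of the three, as it does not even require radial symmetry.
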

\begin{proof}
We already know that any solution $\mathcal U$ of the limit system \eqref{eq025} is radially symmetric. Suppose that there exist $y$ and $z$ in $\mathbb R^{n}\setminus \{0\}$, with $|z| > |y|$ and  $u_{i}(z)>u_{i}(y) = 0$.  By the maximum principle from Chen and Lin in \cite[Lemma 2.1]{ChenLin}, we know that 
\begin{equation*}
u_{i}(x) \geq \inf_{\partial B_{|z|}}u_{i} = u_{i}(z) > 0,
\end{equation*}
for all $x \in B_{|z|}\backslash\{0\}$, which contradicts the fact that $u_{i}(y) = 0$. Hence, if there exists a point $y$ such that $u_{i}(y) = 0$ then $u_{i}$ must be zero for all $z$ with $|z| \geq |y|$.
However, we know that the corresponding function $v_{i}$ is a solution for the ODE system $\eqref{ODELS}$, and by the uniqueness of solutions to the ODE system, it would be identically zero.
\end{proof}

In order to prove the classification result, we will need the following technical results.
\begin{lemma}\label{lemma3.1} If $v_{i}$ has a limit $C_{i}$ at $+\infty$ or $- \infty$, then $C_{i} \leq \left(\frac{n-2}{n}\right)^{\frac{n-2}{4}}$. Moreover, if $\Psi(t)=K\geq 0$, then $C_i=0$.
\end{lemma}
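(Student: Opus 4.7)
My plan is to determine the possible limits of $V$ by analyzing the $\omega$-limit set of the orbit $t\mapsto (V(t),V'(t))$ in phase space. Working at $t\to+\infty$ (the case $t\to-\infty$ is symmetric by time reversal), both assertions are trivial when $C_i=0$, so I assume throughout that $C_i>0$. Since $V$ and $V'$ are bounded and the ODE yields
\[
v_i'' \;=\; \delta^2 v_i - \frac{n(n-2)}{4}|V|^{4/(n-2)}\,v_i,
\]
the function $v_i''$ is bounded as well, so $v_i'$ is Lipschitz and in particular uniformly continuous. Combined with $\int_0^T v_i'(s)\,ds = v_i(T)-v_i(0)\to C_i - v_i(0)$, Barbalat's lemma yields $v_i'(t)\to 0$.

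Next I would introduce the set $\mathcal{O}$ of accumulation points of $(V(t),V'(t))$ in $\mathbb{R}^4$. Because $v_i(t)\geq C_i/2>0$ for $t$ large, $\mathcal{O}$ lies in the open region $\{V\neq 0\}$ where the vector field of \eqref{ODELS} is smooth; by standard dynamical-systems arguments (boundedness of the orbit plus continuous dependence on initial conditions) $\mathcal{O}$ is nonempty, compact, and invariant under the flow. Any trajectory contained in $\mathcal{O}$ has first coordinate identically equal to $C_i$, so $v_i'\equiv 0$ and $v_i''\equiv 0$ along it. Substituting into the ODE,
\[
0 \;=\; C_i\Bigl[\delta^2 - \frac{n(n-2)}{4}|V|^{4/(n-2)}\Bigr],
\]
and dividing by $C_i>0$ forces $|V|\equiv L$ on $\mathcal{O}$, where $L := \bigl(\tfrac{n-2}{n}\bigr)^{(n-2)/4}$. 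In particular, with $j\neq i$, $C_i^2\leq C_i^2 + v_j^2 = L^2$ on $\mathcal{O}$, which proves the desired upper bound $C_i \leq L$.

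For the second assertion, $v_j\geq 0$ together with $v_j^2 = L^2-C_i^2$ forces $v_j \equiv \sqrt{L^2-C_i^2}$ to be constant on $\mathcal{O}$, so $v_j'\equiv 0$ there as well. Hence $\mathcal{O}$ collapses to the single equilibrium $\bigl((C_i,\sqrt{L^2-C_i^2}),(0,0)\bigr)$, and so $V(t)\to(C_i,\sqrt{L^2-C_i^2})$ and $V'(t)\to 0$ as $t\to\infty$. Passing to the limit in the Hamiltonian identity $2K = |V'|^2 - \delta^2|V|^2 + \delta^2|V|^{2n/(n-2)}$ and using $L^{2n/(n-2)} = L^2\cdot\tfrac{n-2}{n}$, one obtains
\[
K \;=\; \frac{\delta^2 L^2}{2}\Bigl(\frac{n-2}{n}-1\Bigr) \;=\; -\,\frac{\delta^2 L^2}{n} \;<\; 0,
\]
contradicting the hypothesis $K\geq 0$. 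Therefore $C_i=0$ whenever $K\geq 0$.

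The main technical point is the flow-invariance of $\mathcal{O}$: it requires the orbit to remain uniformly away from the singular set $\{V=0\}$ of the nonlinearity $|V|^{4/(n-2)}V$, and that separation is precisely what the preliminary reduction to $C_i>0$ guarantees. Once invariance is in hand, the whole argument reduces to identifying the equilibria of \eqref{ODELS} on the positive cone, namely the origin and the arc $\{|V|=L,\ v_1,v_2\geq 0\}$, and then computing the Hamiltonian at such an equilibrium.
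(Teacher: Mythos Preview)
Your proof is correct and reaches the same conclusion as the paper's, but the packaging is different enough to be worth noting. The paper argues by taking translates $v_{i,k}(t)=v_i(t+k)$ and using $C^2$-compactness (from the boundedness of $V,V',V''$) to extract a subsequence with $v_{1,k}\to C_1$ and $v_{2,k}^2\to w$ in $C^2_{\mathrm{loc}}$; plugging the limit into \eqref{ODELS} forces $C_1^2+w=L^2$ with $L=((n-2)/n)^{(n-2)/4}$, and then the Hamiltonian at that equilibrium is computed to be strictly negative. You instead invoke Barbalat's lemma to get $v_i'\to 0$ and then run a standard $\omega$-limit set argument, using invariance to pin down $\mathcal O$ as a single equilibrium on the arc $\{|V|=L\}$. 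Both routes amount to ``pass to the limit along the orbit and read off the constraint from the ODE''; your version makes the dynamical-systems structure more explicit and isolates clearly where the hypothesis $C_i>0$ is used (to keep the orbit away from the non-Lipschitz locus $\{V=0\}$ when $\tfrac{4}{n-2}<1$), while the paper's translate argument is shorter because $C^2$-compactness delivers $v_i'\to 0$ and the convergence of $v_j$ in one stroke, without a separate Barbalat step.
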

\begin{proof} Assume that $\displaystyle \lim_{t\rightarrow +\infty}v_{1}(t) = C_{1} > 0$. For each coordinate consider a sequence of translates $v_{i,k}(t) = v_{i}(t+k)$. By \eqref{ODELS}, up to passing to a subsequence, we may assume that $v_{1,k}\rightarrow C_{1}$ and $v_{2,k}^2 \rightarrow w \geq 0$ uniformly in compacts with respect to the $C^{2}$ topology. Thus 
$$-\delta^{2} C_{1} + \frac{n(n-2)}{4}(C_{1}^{2} + w)^{\frac{2}{n-2}}C_{1} = 0,$$
 which implies that $C_{1} \leq \left(\frac{n-2}{n}\right)^{\frac{n-2}{4}}$ and $w$ is also constant, $C_2$ for example. This implies that
 $$K=-\frac{\delta^2}{2}|C|^2+\frac{\delta^2}{2}|C|^{\frac{2n}{n-2}}\geq 0,$$
 where $C=(C_1,C_2)$. Therefore, $C_i=0$.
\end{proof}

Now, for each $i$ define the auxiliary functions $f_{i} : \mathbb{R} \rightarrow \mathbb{R}$ by
\begin{equation}\label{f}
f_{i}(t) := - \frac{1}{2}(v_{i}'(t))^{2} + \frac{\delta^{2}}{2}v_{i}(t)^{2} - \frac{\delta^2}{2}v_{i}(t)^{\frac{2n}{n-2}}.
\end{equation} 
By $\eqref{ODELS}$ it is easy to see that  
$$f_{i}'(t) = \frac{\delta^2}{2}\left(|V|^{\frac{4}{n-2}}(t) - v_{i}(t)^{\frac{4}{n-2}}\right)v_{i}(t)v_{i}'(t).$$ 
Since $|V|\geq v_i$, then the monotonicity of $f_{i}$ is exactly the same as the monotonicity of $v_{i}$. This monotonicity property is very important and it will be used in the sequel.

\begin{lemma}\label{lemma3.2} For all $t$ in $\mathbb R$ we have that
	\begin{equation*}
	v_{i}(t) < 1.
	\end{equation*}
\end{lemma}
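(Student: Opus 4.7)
My plan is to argue by contradiction: assume $v_1(t_0)\geq 1$ for some $t_0\in\mathbb{R}$ (the case $v_2(t_0)\geq 1$ is symmetric). By the dichotomy lemma, if $v_2\equiv 0$ the system collapses to the scalar Fowler ODE for $v_1$, where the strict bound $v_1 < 1$ is classical via the scalar Hamiltonian, contradicting the assumption. I may therefore assume $v_2 > 0$ everywhere.

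The first step is a strict concavity estimate on the set where $v_1\geq 1$. Using $|V|\geq v_1 \geq 1$:
\[
v_1'' \;=\; v_1\Bigl(\delta^{2}-\tfrac{n(n-2)}{4}|V|^{4/(n-2)}\Bigr)\;\le\;-\tfrac{n-2}{2}\,v_1 \;\le\;-\tfrac{n-2}{2}\;<\;0.
\]
Since $v_1$ is bounded, no connected component of $\{v_1\geq 1\}$ can extend to $\pm\infty$ (otherwise $v_1'\to -\infty$ forces $v_1\to-\infty$), so the component $[a,b]$ containing $t_0$ is bounded with $v_1(a)=v_1(b)=1$. The strict concavity on $[a,b]$ gives either $M_1:=\max_{[a,b]}v_1 > 1$ attained at some $t_\ast \in (a,b)$, or else $[a,b]$ degenerates to the single point $\{t_0\}$ with $M_1=1$.

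The contradiction comes from the fact that $f_1$ has the same monotonicity as $v_1$. At any critical point of $v_1$, $f_1 = \tfrac{\delta^{2}}{2}v_1^{2}(1-v_1^{4/(n-2)})$, so $f_1(t_\ast)\leq 0$, with equality iff $M_1=1$. Also, evaluating the Hamiltonian at $t_\ast$ gives $K=\tfrac{1}{2}v_2'(t_\ast)^2+\tfrac{\delta^{2}}{2}|V(t_\ast)|^{2}(|V(t_\ast)|^{4/(n-2)}-1)\geq 0$ (since $|V(t_\ast)|\geq M_1\geq 1$), so by Lemma~\ref{lemma3.1} any limit of $v_1$ at $\pm\infty$ is $0$. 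I would then track $v_1$ past $b$ in two subcases. (a) If $v_1$ has a subsequent local minimum at some $t'_0 > b$ with value $m_0\in(0,1)$ (positivity by dichotomy; strictly less than $1$ since $v_1<1$ on $(b,t'_0]$), then $v_1$ is monotone decreasing on $[t_\ast,t'_0]$ (no intermediate critical points), so $f_1$ decreases there; but $f_1(t'_0)=\tfrac{\delta^{2}}{2}m_0^{2}(1-m_0^{4/(n-2)})>0$ while $f_1(t_\ast)\leq 0$, contradiction. (b) If instead $v_1$ is monotone decreasing on $[b,\infty)$, then $v_1,v_1'\to 0$ and $f_1\to 0$, so monotonicity forces $f_1(b)\geq 0$; but $f_1(b)=-\tfrac{1}{2}v_1'(b)^{2}\leq 0$, and when $M_1>1$ strict concavity on $(t_\ast,b)$ yields $v_1'(b)<0$ and hence $f_1(b)<0$, contradiction. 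In the residual degenerate case $M_1=1$ under scenario (b), one obtains $f_1\equiv 0$ on $[t_0,\infty)$, which differentiated through the ODE forces $|V|\equiv v_1$ there, i.e.\ $v_2\equiv 0$ on a half-line and hence everywhere by uniqueness, contradicting $v_2>0$.

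The main obstacle is the degenerate case $M_1=1$: the concavity inequality at $t_\ast$ becomes an equality, and one must instead propagate $f_1\equiv 0$ through the ODE to recover scalar structure and invoke the dichotomy. The remaining bookkeeping is to ensure the relevant local extrema of $v_1$ (and their signs through the auxiliary $f_1$) are correctly located between $t_\ast$ and either the next critical point or $+\infty$.
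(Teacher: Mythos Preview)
Your proof is correct and follows essentially the same strategy as the paper: both hinge on the auxiliary function $f_1$ sharing the monotonicity of $v_1$, and both derive a contradiction by tracking $f_1$ from a local maximum $t_\ast$ of $v_1$ with $v_1(t_\ast)\geq 1$ either to the next critical point of $v_1$ or to $+\infty$. The paper goes directly to the next critical point $t_1$, notes $f_1(t_1)<f_1(t_\ast)\leq 0$ forces $v_1(t_1)>1$, and gets a sign contradiction on $v_1''(t_1)$ from the ODE; in the monotone-to-infinity case it uses only the first part of Lemma~\ref{lemma3.1}. Your version adds two useful refinements: you dispose of $v_2\equiv 0$ at the outset (reducing to the scalar Fowler bound), and you observe that the Hamiltonian satisfies $K\geq 0$ at $t_\ast$, which through the second part of Lemma~\ref{lemma3.1} forces any limit of $v_1$ to vanish and makes the monotone case clean. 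You also handle the degenerate situation $M_1=1$ explicitly via $f_1\equiv 0$ forcing $v_2\equiv 0$; the paper's compressed write-up leaves this edge case (and the strictness of $f_1(t_1)<f_1(t_\ast)$, which needs $v_2\not\equiv 0$) implicit.
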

\begin{proof}
Suppose by contradiction that the result is not true. By Lemma \ref{lemma3.1} there exists a local maximum point $t_{0} \in \mathbb{R}$ of $v_{1}$ such that $v_{1}(t_{0}) \geq 1$. This implies that 
\begin{equation*}
f_{1}(t_{0}) = \frac{\delta^{2}}{2}v_{1}^{2}(t_{0}) - \frac{\delta^2}{2}v_{1}(t_{0})^{\frac{2n}{n-2}} \leq 0.
\end{equation*}
We claim that there exists $t_{1} > t_{0}$ such that $v_{1}'(t) < 0$ for $t \in (t_{0},t_{1})$ and $v_{1}'(t_{1}) = 0$. Otherwise it would follow from Lemma \ref{lemma3.1} that $v_{1} \downarrow C_{1}\leq ((n-2)/n)^{\frac{n-2}{4}}$ and $f_{1}(t) \downarrow -C < 0$ as $t$ goes to $+ \infty$. Then we have that $v_{1}'(t) \rightarrow 0$ as $t$ goes to $+\infty$ and so 
\begin{equation*}
0 > -C = \frac{\delta^{2}}{2}C_{1}^{2} - \frac{\delta^2}{2}C_{1}^{\frac{2n}{n-2}} > 0,
\end{equation*}
which is a contradiction. Hence there exists such $t_{1} $ and it satisfies $v_{1}''(t_{1})  \geq 0$. On the other hand
\begin{equation*}
f_{1}(t_{1}) = \frac{\delta^{2}}{2}v_{1}(t_{1})^{2} - \frac{\delta^2}{2}v_{1}(t_{1})^{\frac{2n}{n-2}} < f_{1}(t_{0}) \leq 0,
\end{equation*}
namely $v_{1}(t_{1}) > 1$ which implies from $\eqref{ODELS}$ that $v_{1}''(t_{1}) < 0$, which is again a contradiction. Therefore, such $t_0$ can not exist.
\end{proof}

\subsection{Classification Result}

In the beggining of this section, we mentioned that the solutions of our limit system play a similar role to the Fowler solutions in the singular Yamabe problem. We will summarize briefly basic properties about Fowler solutions. 

Remember that a positive function $u$ is a Fowler or Delaunay-type solution if it satisfies
\begin{equation}\label{sss}
\Delta u + \frac{n(n-2)}{4}u^{\frac{n+2}{n-2}}=0 \quad \mbox{in}\quad \mathbb R^{n}\setminus \{0\}.
\end{equation}

It is well known by  \cite[Theorem 8.1]{CGS}  that $u$ is rotationally invariant, and thus the equation can be reduced to an ODE. Since $\mathbb R^{n}\backslash \{0\}$ is conformally diffeomorphic to a cylinder, relative to the cylindrical metric this equation becomes
\begin{equation}\label{fow}
	v'' - \frac{(n-2)^2}{4}v + \frac{n(n-2)}{4}v^{\frac{n+2}{n-2}} = 0.
\end{equation}
The analysis of this equation can be done by converting into a system of first order equations
\begin{equation*}
\left\{
\begin{array}{rcl}
v'& =&w\\
w' &=& \displaystyle\frac{(n-2)^2}{4}v - \frac{n(n-2)}{4}v^{\frac{n+2}{n-2}}.
\end{array}
\right.
\end{equation*}
whose Hamiltonian energy, given by
\begin{equation*}
	H(v,w) = w^2 - \frac{(n-2)^2}{4}v^2 + \frac{(n-2)^2}{4}v^{\frac{2n}{n-2}}
\end{equation*}
is constant along solutions of $\eqref{fow}$. 

By examining the level curves of the energy, we see that all positive solutions lies in the bounded set $\{H < 0\}\cap\{v>0\}$. The basic properties of this solutions are summarized in the next proposition whose proof can be found in \cite{MP}.
\begin{proposition} For any $H_{0} \in (-((n-2)/n)^{n/2}(n-2)/2,0)$, there exists a unique bounded solution of $\eqref{fow}$ satisfying $H(v,v') = H_0$, $v'(0) = 0$ and $v''(0) > 0$. This solution is periodic and for all $t\in \R$ we have $v(t) \in (0, 1)$. This solution can be indexed by the parameter $\varepsilon = v_\varepsilon(0) \in(0, ((n-2)/n)^{(n-2)/4})$, which is the smaller of the two values $v_\varepsilon$ assumes when $v'_\varepsilon(0) = 0$. 
\end{proposition}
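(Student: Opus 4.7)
The plan is to carry out a standard phase-plane analysis of the planar autonomous Hamiltonian system equivalent to \eqref{fow}, with conserved quantity
\begin{equation*}
H(v,w)=w^{2}+G(v), \qquad G(v):=-\frac{(n-2)^{2}}{4}v^{2}+\frac{(n-2)^{2}}{4}v^{\frac{2n}{n-2}}.
\end{equation*}
I would first read off the geometry of $G$ on $(0,\infty)$. Solving $G'(v)=0$ gives a unique positive critical point $v^{*}=((n-2)/n)^{(n-2)/4}$, at which a direct computation yields
\begin{equation*}
G(v^{*})=-\frac{n-2}{2}\bigl((n-2)/n\bigr)^{n/2},
\end{equation*}
which matches the lower endpoint for $H_{0}$ in the statement. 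Moreover $G(0)=G(1)=0$, $G$ is strictly decreasing on $(0,v^{*})$ and strictly increasing on $(v^{*},\infty)$ with $G(v)\to+\infty$, the equilibrium $(v^{*},0)$ of the Hamiltonian system is a (nonlinear) center, and $(0,0)$ is a hyperbolic saddle with eigenvalues $\pm (n-2)/2$.

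Using these facts I would describe the orbit of energy $H_{0}\in(G(v^{*}),0)$. For each such $H_{0}$, the equation $G(v)=H_{0}$ has exactly two positive roots $\varepsilon=v_{1}<v^{*}<v_{2}<1$, and the level set $\{H=H_{0}\}\cap\{v>0\}$ is the smooth closed curve $w^{2}=H_{0}-G(v)$ with $v\in[v_{1},v_{2}]$. Since $H_{0}<0$ strictly, this curve stays at positive distance from the saddle at the origin and is entirely contained in $\{0<v<1\}$, yielding the claim $v(t)\in(0,1)$. On the orbit, $v'=0$ holds only at the two turning points, and at either one the ODE gives
\begin{equation*}
v''=\frac{n-2}{4}\,v\bigl((n-2)-n\,v^{4/(n-2)}\bigr),
\end{equation*}
which is strictly positive at $v_{1}<v^{*}$ and strictly negative at $v_{2}>v^{*}$. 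Hence the only initial data compatible with $v'(0)=0$ and $v''(0)>0$ are $(v(0),v'(0))=(v_{1},0)$, and standard Picard--Lindel\"of uniqueness then produces a single bounded solution $v_{\varepsilon}$ with $\varepsilon=v_{1}$.

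Periodicity is then automatic: the orbit is a smooth simple closed curve in the phase plane free of equilibria, so by the standard theory of planar autonomous systems the solution traverses it in finite time and is periodic. The indexing by $\varepsilon$ follows from the implicit function theorem applied to $G(v)=H_{0}$ on the branch $v\in(0,v^{*})$, where $G'<0$: as $H_{0}$ sweeps $(G(v^{*}),0)$, the smaller root $\varepsilon=v_{1}$ sweeps continuously and monotonically over $(0,v^{*})=(0,((n-2)/n)^{(n-2)/4})$. The entire proof reduces to one-variable calculus on the potential $G$; the only subtle point, and the main place where one must be careful, is to keep the orbit bounded away from the saddle at the origin so that it actually closes up and does not run off along a homoclinic piece. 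This is guaranteed by the strict inequality $H_{0}<0$ (excluding the stable/unstable manifolds of the saddle, which lie in $\{H=0\}$) together with the lower bound $H_{0}>G(v^{*})$ (preventing degeneration onto the center).
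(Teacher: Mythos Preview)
Your proposal is correct and is exactly the standard phase-plane analysis of the Hamiltonian system associated to \eqref{fow}; the paper itself does not supply a proof but simply refers the reader to \cite{MP}, where precisely this argument is carried out. Your computation of $G(v^{*})$, the identification of the saddle at the origin and the center at $v^{*}$, and the use of the strict inequality $H_{0}<0$ to keep the closed level curve away from the homoclinic loop are all accurate and constitute the full argument.
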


In \cite{DHV}, Druet, Hebey and Vetóis have proved a characterization result for entire solutions in $\mathbb R^{n}$ for the limit system \eqref{eq025}. They proved that any solution in $\mathbb R^n$ of the equation \eqref{eq025} is of the form $u\Lambda$, where $u$  is a entire solution of the equation \eqref{fow} and $\Lambda=(x,y)$ with $x^2+y^2=1$ and $x,y>0$.  Inspired by their result, we may ask whether a similar description holds in the singular case. Indeed, we will prove that every nonnegative singular solution of \eqref{eq025} is a Fowler solution, up to a vector in the unit sphere with nonnegative coordinates.

\begin{theorem}\label{class}
	Suppose that $\mathcal{U}$ is a nonnegative singular solution for \eqref{eq025}. Then there exists $\Lambda \in \mathbb{S}^{1}_{+}$ and a Fowler solution $u$ such that 
	\begin{equation*}
	\mathcal{U}= u\Lambda.
	\end{equation*}
	where $\mathbb{S}^{1}_{+} = \{x \in \mathbb{S}^{1} : x_{i} \geq 0\}$
\end{theorem}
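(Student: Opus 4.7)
The plan is to reduce \eqref{eq025} to an ODE analysis via the radial symmetry and then argue that the two coordinates must be proportional. By Theorem \ref{RS}, each $u_i$ depends only on $|x|$, so the Emden--Fowler change of variables $v_i(t)=e^{-\delta t}u_i(e^{-t})$ with $\delta=(n-2)/2$ reduces \eqref{eq025} to the ODE system \eqref{ODELS}. Lemma \ref{L1} gives a clean dichotomy: each $v_i$ either vanishes identically or is strictly positive on $\mathbb{R}$. In the degenerate case where one of them vanishes, say $v_2\equiv 0$, the remaining function $v_1$ satisfies the classical Fowler equation \eqref{fow}, so $u_1$ is a Fowler solution and the theorem holds with $\Lambda=(1,0)\in\mathbb{S}^1_+$. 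Hence the interesting case is $v_1,v_2>0$ everywhere, which I treat next.

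The central observation is that both equations in \eqref{ODELS} share the same nonlinear coefficient $\tfrac{n(n-2)}{4}|V(t)|^{4/(n-2)}$. Setting $\phi:=v_2/v_1$, a Wronskian-type computation using the ODE gives
\begin{equation*}
(\phi'\,v_1^2)' \;=\; v_2''\,v_1-v_2\,v_1'' \;=\; 0,
\end{equation*}
so $\phi'\,v_1^2\equiv c$ for some constant $c\in\mathbb{R}$. I expect the main obstacle to be ruling out $c\neq 0$, and the uniform a priori bound $v_1<1$ established in Lemma \ref{lemma3.2} is precisely what makes this possible: it gives $|\phi'(t)|=|c|/v_1(t)^2\geq|c|$ for every $t\in\mathbb{R}$, so if $c\neq 0$ then $\phi$ would be strictly monotone with slope bounded away from zero, which would force $\phi(t)$ to become negative for $|t|$ large in one direction, contradicting $\phi>0$. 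Therefore $c=0$ and $\phi\equiv\phi_0$ for some positive constant $\phi_0$.

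With $v_2=\phi_0 v_1$ in hand, substituting back into \eqref{ODELS} produces the single ODE
\begin{equation*}
v_1''-\delta^2 v_1+\tfrac{n(n-2)}{4}(1+\phi_0^2)^{2/(n-2)}v_1^{(n+2)/(n-2)}=0,
\end{equation*}
which after the rescaling $u:=(1+\phi_0^2)^{1/2}v_1$ becomes the Fowler equation \eqref{fow}. Undoing the change of variables yields $u_1=(1+\phi_0^2)^{-1/2}u$ and $u_2=\phi_0(1+\phi_0^2)^{-1/2}u$, so $\mathcal{U}=u\Lambda$ with $\Lambda=(1,\phi_0)/\sqrt{1+\phi_0^2}\in\mathbb{S}^1_+$, as claimed. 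The argument is essentially algebraic once the a priori bound $v_i<1$ of Lemma \ref{lemma3.2} is available; this bound is what carries the rigidity in the middle step.
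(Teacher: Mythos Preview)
Your proof is correct and follows essentially the same approach as the paper: both exploit the Wronskian identity $v_1'v_2-v_1v_2'=\text{const}$ (equivalently your $(\phi'v_1^2)'=0$) coming from the shared nonlinear coefficient in \eqref{ODELS}, and both use the uniform bound $v_i<1$ of Lemma \ref{lemma3.2} to rule out a nonzero constant by forcing the ratio to become negative. The only cosmetic difference is that the paper works with $v_1/v_2$ while you work with $v_2/v_1$.
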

\begin{proof}
	We will prove the result in the case where $v_{1}$ and $v_{2}$ are positive solutions since otherwise the result follows directly from Lemma \ref{L1}.
	
	Let ${V} = (v_{1}, v_{2})$ be a solution for \eqref{ODELS} obtained from $\mathcal{U}$ after the change of variables of Fowler. We know that it satisfies
	\begin{equation*}
	v_{i}'' - \delta^{2}v_{i} + \frac{n(n-2)}{4}|{V}|^{\frac{4}{n-2}}v_{i} = 0, \ \ t \in \mathbb{R}.
	\end{equation*}
	From this, we can note that 
	\begin{equation*}
	v_{1}''v_{2} = v_{1}v_{2}''
	\end{equation*}
	which implies
	\begin{equation*}
	v_{1}'v_{2} - v_{1}v_{2}' = c = constant.
	\end{equation*}
	Suppose that $c \neq 0$. Without loss of generality we can assume that $c > 0$. Since $V$ is bounded, then by Lemma \ref{lemma3.2} we obtain
	\begin{equation*}
	\left(\frac{v_1}{v_2}\right)' = \frac{c}{v_{2}^2} \geq c
	\end{equation*}
	which is a contradiction, since the quotient will be a monotone function and thus it would assume negative values. Therefore we conclude that 
	\begin{equation*}
	\left(\frac{v_{1}}{v_{2}}\right)' = 0
	\end{equation*}
	and so $v_{1}/v_{2} = \eta$ is constant. It follows that
	\begin{equation*}
	{V} = (v_{1},v_{2}) = (\eta v_{2},v_{2}) = (\eta, 1)v_{2} = \sqrt{1+\eta^2}v_{2}\left(\frac{\eta}{\sqrt{1+\eta^2}}, \frac{1}{\sqrt{1+\eta^2}}\right)
	\end{equation*}
	and a direct computation shows that $v_{0} = \sqrt{1+\eta^2}v_{2}$ is a Fowler solution.
	
\end{proof}

This result will be extremely important in the study of the Jacobi fields. The Theorem \ref{class} will be very useful to simplify the analysis of the linearized system. Also it will allow us to use all the properties of Fowler solutions that we described previously.

Let us mention some important consequences of the Theorem \ref{class}. The first result is concerns the singularity of the solutions.


\begin{corollary}
Let $\mathcal{U} = (u_{1},u_{2})$ be a positive singular solution to the equation \eqref{eq025}. Then $\displaystyle\lim_{|x|\rightarrow 0}u_{i}(x) = +\infty$, for $i =1,2$.
\end{corollary}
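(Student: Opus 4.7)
The plan is to apply the classification Theorem~\ref{class} directly and then unpack what singularity plus positivity imply for the components.

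First I would invoke Theorem~\ref{class} on the positive singular solution $\mathcal{U}$: there exist $\Lambda = (\Lambda_{1},\Lambda_{2}) \in \mathbb{S}^{1}_{+}$ and a Fowler solution $u_{0}$ such that $\mathcal{U} = u_{0}\Lambda$, that is, $u_{i}(x) = \Lambda_{i}\, u_{0}(x)$ for $i=1,2$. Since $|\Lambda|=1$, a trivial computation gives $|\mathcal{U}(x)| = u_{0}(x)(\Lambda_{1}^{2}+\Lambda_{2}^{2})^{1/2} = u_{0}(x)$, so the singularity assumption $\lim_{|x|\to 0}|\mathcal{U}(x)| = +\infty$ transfers at once to $u_{0}$: namely $u_{0}(x)\to +\infty$ as $|x|\to 0$.

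Next I would observe that the positivity hypothesis forces both weights $\Lambda_{i}$ to be strictly positive. Indeed, if $\Lambda_{i}=0$ for some $i$, then $u_{i} = \Lambda_{i}u_{0} \equiv 0$ on $\mathbb{R}^{n}\setminus\{0\}$, contradicting the assumption that $\mathcal{U}$ is a positive solution (this can also be seen as a consequence of Lemma~\ref{L1}). Therefore $\Lambda_{1},\Lambda_{2}>0$.

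Combining the two observations, for each $i=1,2$ we have
\[
  u_{i}(x) \;=\; \Lambda_{i}\, u_{0}(x) \;\longrightarrow\; +\infty \quad \text{as } |x|\to 0,
\]
since $\Lambda_{i}>0$ is a fixed positive constant and $u_{0}(x)\to +\infty$. This is the desired conclusion.

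The only potential subtlety is ensuring that the Fowler factor $u_{0}$ is genuinely singular (as opposed to extending smoothly to the origin); but this is automatic from the identity $|\mathcal{U}| = u_{0}$ together with the singularity assumption on $\mathcal{U}$. So the main work has already been carried out in the classification Theorem~\ref{class}, and the present corollary is essentially an immediate reading-off of componentwise information from the product decomposition $\mathcal{U}=u_{0}\Lambda$.
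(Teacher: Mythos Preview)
Your proof is correct and follows exactly the approach intended by the paper: the corollary is stated there without explicit proof, simply as an immediate consequence of the classification Theorem~\ref{class}. Your argument---write $\mathcal{U}=u_{0}\Lambda$, deduce $u_{0}=|\mathcal{U}|\to+\infty$ from the singularity assumption, and use positivity to force $\Lambda_{i}>0$---is precisely the intended unpacking.
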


The second direct consequences is the following

\begin{corollary}
If $\mathcal{U}$ is a nonnegative singular solution to the system \eqref{eq025}, then  there exist positive constants $c_{1}$ and $c_{2}$ such that  
\begin{equation}\label{c1}
c_{1}|x|^{\frac{2-n}{2}} \leq |\mathcal{U}(x)| \leq c_{2}|x|^{\frac{2-n}{2}}.
\end{equation}
\end{corollary}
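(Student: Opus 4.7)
The plan is to read the estimate directly off Theorem \ref{class}. By that theorem, any nonnegative singular solution $\mathcal{U}$ of \eqref{eq025} can be written as $\mathcal{U} = u_0 \Lambda$ with $\Lambda \in \mathbb{S}^{1}_{+}$ and $u_0$ a Fowler solution of \eqref{sss}. Since $|\Lambda| = 1$, we immediately have
\[
|\mathcal{U}(x)| = u_0(|x|),
\]
so it suffices to establish two-sided bounds of the form $c_1 |x|^{(2-n)/2} \leq u_0(|x|) \leq c_2 |x|^{(2-n)/2}$ for any Fowler solution.

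To do this, I would pass to the Fowler/Emden--Fowler variables $v_0(t) := e^{-\delta t} u_0(e^{-t})$ with $\delta = (n-2)/2$, already used throughout this section. Then $v_0$ solves the autonomous ODE \eqref{fow}, and by the Proposition recalled just before Theorem \ref{class}, $v_0$ is periodic, positive, and takes values in $(0,1)$; in particular, by periodicity and continuity, there exist constants $c_1, c_2 > 0$ (depending only on the Hamiltonian energy, equivalently on the parameter $\varepsilon = v_0(0)$) such that
\[
c_1 \leq v_0(t) \leq c_2 \quad \text{for all } t \in \mathbb{R}.
\]
Translating back through $u_0(r) = r^{-\delta} v_0(-\ln r)$ with $r = |x|$ yields exactly
\[
c_1 |x|^{(2-n)/2} \leq u_0(|x|) \leq c_2 |x|^{(2-n)/2},
\]
and combining with $|\mathcal{U}(x)| = u_0(|x|)$ gives the stated estimate.

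There is no real obstacle here; the entire content of the corollary is absorbed by Theorem \ref{class} together with the classical Fowler ODE analysis recalled in this section. The only small thing to note is that strict positivity of the lower bound requires $v_0$ to stay away from $0$, which is ensured by the fact that a nonnegative singular solution of \eqref{eq025} cannot have $u_0 \equiv 0$ (indeed, $|\mathcal{U}(x)| \to \infty$ as $|x| \to 0$), so $v_0$ is a genuinely positive periodic solution and hence bounded below by the minimum value $\varepsilon > 0$ of its period.
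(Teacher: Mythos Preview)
Your proof is correct and follows exactly the approach the paper intends: the corollary is stated there as a ``direct consequence'' of Theorem \ref{class} with no further argument, and your derivation via $|\mathcal{U}(x)| = u_0(|x|)$ together with the boundedness of $v_0$ in the Fowler variables is precisely how one unpacks that. There is nothing to add.
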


To understand the third and last consequence, given a solution $\mathcal U$ of \eqref{eq025} consider, 
It is an easy computation to see that 
\begin{equation}\label{pohid}
P(r,\mathcal{U}) = P(s, \mathcal{U}),
\end{equation}
for any $r$ and $s$, where $P(r,\mathcal U)$ is defined in \eqref{P1}. Hence we can denote it by $P(\mathcal{U})$ which is called the Pohozaev invariant of the solution $\mathcal U$. Besides note that by the classification result, there exist a Fowler solution $u_{0}$ and an unit vector $\Lambda = (\Lambda_{1}, \Lambda_{2})$ with nonnegative coordinates, such that $\mathcal{U} = u_{0}\Lambda$. Consequently $P(\mathcal{U})$ coincides with the Pohozaev invariant of $u_{0}$ as defined in \cite{CGS} and we obtain the following important result

\begin{corollary}\label{classical} Let $\mathcal{U}=(u_{1} , u_{2})$ be a nonnegative solution of $\eqref{sss}$. Then $P(\mathcal{U}) \leq 0$. Moreover, $P(\mathcal{U}) = 0$ if and only if, $u_{i} \in C^{2}(\mathbb R^{n})$, for all $i$.
\end{corollary}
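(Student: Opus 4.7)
The plan is to exploit the constancy of $P(r,\mathcal{U})$ together with the classification Theorem \ref{class} to reduce the claim to the classical Pohozaev invariant of the scalar Fowler equation studied in \cite{CGS}. By \eqref{pohid}, the function $r\mapsto P(r,\mathcal{U})$ is independent of $r$, so I can evaluate it at any convenient radius. It is then natural to split the argument according to whether the origin is a removable or a nonremovable singularity.

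First I would handle the removable case. If $\mathcal{U}$ extends as a $C^{2}$ solution to all of $\mathbb{R}^{n}$ (equivalently, if each $u_{i}\in C^{2}(\mathbb{R}^{n})$), then in a neighborhood of the origin the functions $|\mathcal{U}|$, $|\nabla \mathcal{U}|$ are bounded. Inspecting the four terms in the definition \eqref{P1}, the integrand is uniformly bounded on $\partial B_{r}$ for small $r$, while the surface measure of $\partial B_{r}$ has size $r^{n-1}$. Since $n\geq 3$, each term vanishes as $r\to 0^{+}$, so $P(r,\mathcal{U})\to 0$. By \eqref{pohid} this forces $P(\mathcal{U})=0$.

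Next I would treat the nonremovable case. Invoking Theorem \ref{class}, there exist a Fowler solution $u_{0}$ of \eqref{sss} and a unit vector $\Lambda=(\Lambda_{1},\Lambda_{2})\in\mathbb{S}^{1}_{+}$ with $\mathcal{U}=u_{0}\Lambda$. Because $|\Lambda|^{2}=1$, one obtains immediately
\begin{equation*}
|\mathcal{U}|^{2}=u_{0}^{2},\qquad \langle\mathcal{U},\partial_{\nu}\mathcal{U}\rangle=u_{0}\partial_{\nu}u_{0},\qquad |\nabla\mathcal{U}|^{2}=|\nabla u_{0}|^{2},\qquad |\partial_{\nu}\mathcal{U}|^{2}=|\partial_{\nu}u_{0}|^{2}.
\end{equation*}
Substituting into \eqref{P1} shows that $P(r,\mathcal{U})$ coincides term-for-term with the classical scalar Pohozaev integral for $u_{0}$. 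The well-known result of Caffarelli--Gidas--Spruck then applies: the Pohozaev invariant of any Fowler solution with a genuine singularity at the origin is strictly negative. Hence $P(\mathcal{U})=P(u_{0})<0$.

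Combining both cases yields $P(\mathcal{U})\leq 0$ in general, with equality precisely when the singularity is removable, i.e., when each $u_{i}\in C^{2}(\mathbb{R}^{n})$. There is no real obstacle here; the work has been done upstream in Theorem \ref{class}, and the only verification that requires any care is checking that the boundary integrals vanish in the removable case, which is elementary from the $r^{n-1}$ factor and $n\geq 3$.
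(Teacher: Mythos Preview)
Your argument is correct and follows the same route as the paper: reduce $P(\mathcal{U})$ to the scalar Pohozaev invariant of $u_{0}$ via the classification Theorem \ref{class} and then invoke the classical result of \cite{CGS}. The only minor difference is that in the removable case you compute $P(r,\mathcal{U})\to 0$ directly from the boundedness of $\mathcal{U}$ and the $r^{n-1}$ surface factor, whereas the paper simply notes (via \cite{DHV} for entire solutions and Theorem \ref{class} for singular ones) that in either case $\mathcal{U}=u_{0}\Lambda$, so $P(\mathcal{U})=P(u_{0})$ and the full dichotomy is inherited from the scalar theory; both treatments are equivalent here.
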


\subsection{Jacobi Fields for the Limit System}\label{jf}

In the study of the asymptotic behavior of the solution to the singular Yamabe problem, N. Korevaar et al. \cite{KMPS} and F. Marques \cite{marques} have used the growth properties of Jacobi fields as a tool to obtain properties of the singular solutions. In order to prove our main theorem, we need to do a similar analysis. In this case we are going to linearize the equation about a solution given by Theorem \ref{class}. In order to do this, let us first recall some details about the analysis in the simple case.

Let us briefly describe the analysis done in \cite{KMPS}. Consider the equation \eqref{sss} 
which using the cylindrical coordinates change is equivalent to 
\begin{equation}\label{jf2}
\partial_{t}^2 v + \Delta_{\mathbb{S}^{n-1}}v - \frac{(n-2)^2}{4}v + \frac{n(n-2)}{4}v^{\frac{n+2}{n-2}} = 0,
\end{equation}
where $u(x)=|x|^{\frac{2-n}{2}}v(\frac{x}{|x|},-\log|x|)$. The authors in \cite{KMPS} considered the linearization of the operator in $\eqref{jf2}$ around a Fowler solution $v_{\varepsilon}$ which is given by
	\begin{equation}\label{jf3}
	L_{\varepsilon} = \partial_{t}^{2} + \Delta_{\mathbb{S}^{n-1}} - \frac{(n-2)^2}{4} + \frac{n(n+2)}{4}v_{\varepsilon}^{\frac{4}{n-2}}.
	\end{equation}
Since the above operator has periodic coefficients, it could be studied using the classical Flocquet theory or also by separation of variables, see \cite{RS}. The elements in the kernel of the linearized operator $L_\varepsilon$, that is, the functions $\psi$ satisfying $L_{\varepsilon}\psi = 0$, are called \textbf{Jacobi Fields}. 
The linearization \eqref{jf3} was studied using results due to R. Mazzeo, D. Pollack and K. Uhlenbeck in \cite{MPU} based on the spectral decomposition of the laplacian operator $\Delta_{\mathbb{S}^{n-1}}$.
	
If $\{\lambda_{k}, \mathcal{X}_{k}(\theta)\}$ are the eigendata of $\Delta_{\mathbb{S}^{n-1}}$, using the convention that this eigenvalues are listed with multiplicity, we can write $\lambda_{0} = 0$, $\lambda_{1} = \cdots = \lambda_{n} = n-1$, $\lambda_{n+1} = 2n$ and so on. Hence the linearized operator could be decomposed into many ordinary differential operators given by
	\begin{equation}\label{proj}
	L_{\varepsilon,k} = \partial_{t}^{2} + \left(\frac{n(n+2)}{4}v_{\varepsilon}^{\frac{4}{n-2}} - \frac{(n-2)^2}{4} - \lambda_{k}\right).
	\end{equation}
		
With the decomposition of the laplacian in terms of the eingenvalues, it is sufficient consider solutions to the induced problems $L_{\varepsilon,k}(\psi_k) = 0$, where $\psi(t,\theta)=\sum_k\psi_k(t)\chi_k(\theta)$.
To the first eingenvalue $\lambda_{0} = 0$, we consider the families of solutions to (\ref{jf2}) given by
\begin{equation}\label{fam}
	T \rightarrow v_{\varepsilon}(t +T) \quad\mbox{ and } \quad\varepsilon\rightarrow v_{\varepsilon}(t),
\end{equation}
then differentiate these families with respect to the parameters we obtain solutions to (\ref{proj}) for $k=0$ given by 
\begin{equation*}
\psi_{\varepsilon,0}^{+}(t) = \frac{d}{dT}v_{\varepsilon}(t+T) = v'_{\varepsilon}(t+T), \quad \psi_{\varepsilon,0}^{-}(t) = \frac{d}{d\varepsilon}v_{\varepsilon}(t)
\end{equation*}
which are linearly independents Jacobi fields with periodic and linearly growth, respectively. 

Using a similar construction for $\lambda_{k}=n-1$ , they were able to build linearly independent solutions $\psi_{\varepsilon,k}^{\pm}$ that are exponentially increasing and decreasing. Finally when $k > n+1$, we known that the term of zero order of the operator in \eqref{proj} is negative, since $\lambda_{k}\geq 2n$ and $v_{\varepsilon} < 1$, which implies that $L_{\varepsilon,k}$ satisfies the maximum principle. 

Once we have listed the aforementioned properties, we can begin the study of the system \eqref{eq025}. Consider the operator in cylindrical coordinates $H({V}) = (H^{1}({V}), H^{2}({V}))$ where
\begin{equation}\label{jf5}
H^{i}({V}) = \partial_{t}^{2}v_{i} + \Delta_{\mathbb{S}^{n-1}}v_{i} - \frac{(n-2)^2}{4}v_{i} + \frac{n(n-2)}{4}|\mathcal{V}|^{\frac{4}{n-2}}v_{i}.
\end{equation}
The linearization of this operator on a solution ${V}_{0} = (v_{0,1}, v_{0,2})$ is given by $\mathcal{L}(\phi) = (\mathcal{L}^{1}(\phi), \mathcal{L}^{2}(\phi))$ where
\begin{equation*}
\begin{array}{rcl}
\mathcal{L}^{i}(\phi) &=  &\displaystyle\frac{\partial}{\partial t}\bigg|_{t=0} H^{i}({V}_{0} + t\phi)  =  \displaystyle\partial_{t}^{2}\phi_{i} + \Delta_{\mathbb{S}^{n-1}}\phi_{i} - \frac{(n-2)^2}{4}\phi_{i} \\
& & \displaystyle
+ \frac{n(n-2)}{4}\left(\frac{4}{n-2}|{V}_{0}|^{\frac{4}{n-2}-2}\langle{V}_{0}, \phi\rangle v_{0,i} + |{V}_{0}|^{\frac{4}{n-2}}\phi_{i}\right),
\end{array}
\end{equation*}
where $\phi=(\phi_1,\phi_2)$. However, by the Theorem \ref{class} there exists a Fowler solution $v_{\varepsilon}$ and $\Lambda = (\Lambda_{1},\Lambda_{2}) \in\mathbb S^1$ with $\Lambda_i>0$ such that $V_{0} = v_{\varepsilon}\Lambda$. Using this, the operator $\mathcal L^i$ can be simplified and written as
\begin{equation}\label{jf4}
\begin{aligned}
\mathcal{L}_{\varepsilon}^{i}(\phi) =   \partial_{t}^{2}\phi_{i} + \Delta_{\mathbb{S}^{n-1}}\phi_{i} - \frac{(n-2)^2}{4}\phi_{i} 
+ n\Lambda_{i}\langle \Lambda, \phi\rangle v_{\varepsilon}^{\frac{4}{n-2}} + \frac{n(n-2)}{4}v_{\varepsilon}^{\frac{4}{n-2}}\phi_{i}.
\end{aligned}
\end{equation}
Now we will study the \textbf{Jacobi fields} to the system \eqref{eq025}, that is, solutions of the linear ODE system $\mathcal L_{\varepsilon}({\phi}) = 0$.

Similarly to the case of a single equation, we consider the spectral decomposition of the operator $\Delta_{\mathbb{S}^{n-1}}$. Let $\phi = (\phi_{1},\phi_{2})$ be a solution of $\mathcal{L}_{\varepsilon}(\phi)=0$ and consider $\phi_{i}(\theta,t) = \sum \phi_{ik}(t)\mathcal{X}_{k}(\theta).$
Then $\phi_{ik}$ satisfies 
\begin{equation*}
	\mathcal{L}_{\varepsilon, k}\phi_{ik} + n\Lambda_{i}\langle \Lambda, ({\phi}_{1k},{\phi}_{2k})\rangle v_{\varepsilon}^{\frac{4}{n-2}} + \frac{n(n-2)}{4}v_{\varepsilon}^{\frac{4}{n-2}}\phi_{ik} = 0,
\end{equation*}
where $\mathcal{L}_{\varepsilon, k} = \partial_{t}^{2} - (\lambda_{k} + \frac{(n-2)^2}{4})$.


In the one equation case we saw that the two linearly independent fields were obtained as derivatives of one-parameter families of solutions of $\eqref{jf5}$. However, since we are dealing with an ODE linear system of second order with two equations, we expect to obtain four linearly independent Jacobi fields. 

Consider the following one-parameter families of solutions to the system \eqref{eq025} in cylindrical coordinates
\begin{equation*}
	T \rightarrow v_{\varepsilon}(t + T)\Lambda \quad\mbox{and}\quad \varepsilon \rightarrow v_{\varepsilon}\Lambda.
\end{equation*}

The derivatives of this families provide us the solutions $\phi_{\varepsilon, 0}^{1} = \psi_{\varepsilon, 0}^{+}\Lambda$ and $\phi_{\varepsilon, 0}^{2} = \psi_{\varepsilon, 0}^{-}\Lambda$, where $\psi_{\varepsilon, 0}^{\pm}$ are the previously described linearly independents Jacobi fields for one equation. Consequently $\phi_{\varepsilon,0}^1$ and $\phi_{\varepsilon,0}^2$ are also linearly independent and they are periodic and linearly growth, respectively.

Observe that there is another trivial one-parameter family of solutions to \eqref{eq025}. In fact, let $\Lambda(\alpha) = (\cos \alpha, \sin \alpha)$ with $\Lambda(0) = \Lambda$ be a path in the sphere and consider the family
\begin{equation*}
\alpha\rightarrow v_{\varepsilon}\,\Lambda(\alpha).
\end{equation*}
If we differentiate this family with respect to the parameter we get a third Jacobi field $\phi_{\varepsilon, 0}^{3} = v_{\varepsilon}\overline{\Lambda}$ which is a periodic function, where $\overline{\Lambda}\not=0$ is orthogonal to $\Lambda$. We automatically have that $\{\phi_{\varepsilon,0}^1, \phi_{\varepsilon,0}^2, \phi_{\varepsilon,0}^3\}$ is a linearly independent set.

Unfortunately, we are only able to construct the fourth Jacobi Field in an indirect way. First observe that $v_{\varepsilon}$ satisfies $\overline{L}_{\varepsilon,0}v_{\varepsilon} = 0$, where
\begin{equation} \label{eq:jf} 
	\overline{L}_{\varepsilon, 0} := \partial_{t}^{2} - \frac{(n-2)^2}{4}+ \frac{n(n-2)}{4}v_{\varepsilon}^{\frac{4}{n-2}}.
\end{equation}


Since this operator is linear we know that there exists a solution $\phi_{\varepsilon,0}^{-}$ to \eqref{eq:jf} which is linearly independent to $\phi_{\varepsilon,0}^{+} := v_{\varepsilon}$. Define $\phi_{\varepsilon,0}^{4} = \phi_{\varepsilon,0}^{-}\overline{\Lambda}$. It is easy to see that $\mathcal L_{\varepsilon,0}(\phi_{\varepsilon,0}^4)=0$. Observe that $\overline{L}_{\varepsilon, 0} < L_{\varepsilon, 0}$. Using standard methods of comparison of solutions to ODE, we obtain that the growth of $\phi_{\varepsilon,0}^-$ is at most linear, see \cite{KMPS}.

Therefore we constructed four linearly independent Jacobi fields to the system \eqref{jf4} with respect to the first eigenvalue.

Based on the above construction, for the higher eigenvalues $\lambda_{k}$ the Jacobi fields is given by  $\phi_{k}^{1} = \psi_{\varepsilon,k}^{+}\Lambda$, $\phi_{k}^{2} = \psi_{\varepsilon,k}^{-}\Lambda$, $\phi_{k}^{3} = \phi_{\varepsilon,k}^{+}\overline{\Lambda}$ and $ \phi_{k}^{4} = \phi_{\varepsilon,k}^{-}\overline{\Lambda}$ where $\psi_{\varepsilon,k}^{\pm}$ are the original solutions constructed for one equation which we already know the growth, see \cite{KMPS}, and $\phi_{\varepsilon,k}^{\pm}$ are solutions of 
\begin{equation*}
	\overline{L}_{\varepsilon,k} = \partial_{t}^{2}+ \frac{n(n-2)}{4}v_{\varepsilon}^{\frac{4}{n-2}} - \frac{(n-2)^2}{4} - \lambda_{k}
\end{equation*}
We know that the zero order term of the above operator is negative when $k>0$ since $v_{\varepsilon} <1$ and $\lambda_k\geq n-1$ for such $k$. This implies that $\overline L_{\varepsilon,k}$ satisfies the maximum principle, so we are able to determine the growth of these fields, concluding our analysis.


\section{Coupled elliptic system in the punctured ball}\label{pb}

Let $g$ be a smooth  Riemannian metric on the unit ball $B^{n}_{1}(0)\subset \mathbb R^{n}$ with $n\geq 3$. Consider a positive solution to the system 
\begin{equation}\label{eq000}
-\Delta_{g}u_{i} + \sum_{j=1}^{2}A_{ij}(x)u_{j} = \frac{n(n-2)}{4}|\mathcal{U}|^{\frac{4}{n-2}}u_{i}
\end{equation}
in the punctured ball $\Omega = B^{n}_{1}(0)\backslash\{0\}$.  Here $A$ is a $C^{1}$ map from the unit ball centered in the origin to the vector space of symmetrical $2\times 2$ real matrices and  $-A$ is cooperative, that is, the components in the nondiagonal $A_{ij}$ of $A$, $i\neq j$, are nonpositive.

\subsection{Upper and lower bounds near a singularity}

In this section we will obtain upper and lower bounds for solutions to our system defined in the punctured  ball. Since there exist a diffeomorphism between the half cylinder and the punctured ball it will be more convenient work in cylindrical coordinates. Explicitly, the diffeomorphism $\Phi : (\mathbb{R}\times S^{n-1}, g_{cyl} = dt^{2} + d\theta^{2}) \rightarrow (\mathbb R^{n}\backslash\{0\},\delta)$ is given by $\Phi(t,\theta)  = e^{-t}\theta$
with inverse $\Phi^{-1}(x)=(-\log|x|,x|x|^{-1})$. One also verifies that $\Phi^{*}\delta = e^{-2t}g_{cyl}$. 

Define $v_{i}(t,\theta) =  |x|^{\frac{n-2}{2}}u_{i}(x)$ and $\hat{g} = e^{2t}\Phi^{*}g = (e^{\frac{n-2}{2}t})^{\frac{4}{n-2}}\Phi^{*}g$. Using that
\begin{equation}\label{confc}
L_{v^{\frac{4}{n-2}}g}(u) = v^{-\frac{n+2}{n-2}}L_{g}(vu),
\end{equation}
where the linear operator $L_{g} = \Delta_{g} - \frac{n-2}{4(n-1)}R_{g}$ is the \emph{conformal Laplacian}, we obtain that the system \eqref{eq000} is equivalent to 
\begin{equation}\label{mud}
\mathcal{L}_{\hat{g}}(v_{i}) - \sum_{j=1}^{2}B_{ij}v_{j} + \frac{n(n-2)}{4}|\mathcal{V}|^{\frac{4}{n-2}}v_{i} = 0,
\end{equation}
where 
\begin{equation}\label{eq02}
\begin{array}{rcl}
\mathcal{L}_{\hat{g}}(v_{i}) &=&\displaystyle \Delta_{\hat{g}}v_{i} - \frac{n-2}{4(n-1)}(R_{\hat{g}} - e^{-2t}R_{\Phi^{*}g})v_{i},\\ 
B_{ij} &=& e^{-2t}A_{ij}\circ\Phi
\end{array}
\end{equation}
and $\mathcal{V} = (v_{1}, v_{2})$. 

It is also useful to remember that in cylindrical coordinates we have
\begin{equation}\label{cur}
R_{\hat{g}} - e^{-2t}R_{\Phi^{*}g} = (n-2)(n-1) + 2(n-1)e^{-t}\frac{\partial_{r}\sqrt{|g|}}{\sqrt{|g|}}\circ \Phi.
\end{equation}

The proof of the next result is strongly based on \cite{marques}.

\begin{theorem}\label{upper} Suppose $3\leq n\leq 5$. Assume that $\mathcal{U} = (u_{1},u_{2})$ is a positive solution of $\eqref{eq000}$ in $\Omega$. There exists a constant $c>0$ such that 
	\begin{equation}
	|\mathcal{U}(x)| \leq cd_{g}(x,0)^{\frac{2-n}{2}},
	\end{equation}
	for $0 < d_{g}(x,0) <1/2$.
\end{theorem}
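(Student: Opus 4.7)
The plan is to argue by contradiction via a blow-up/rescaling analysis followed by the method of moving planes, in the spirit of Marques~\cite{marques}. First I would pass to cylindrical coordinates: set $v_i(t,\theta) := |x|^{(n-2)/2} u_i(x)$ for $x = e^{-t}\theta$, so that $\mathcal{V} = (v_1,v_2)$ satisfies the conformally rescaled system \eqref{mud} with linear coefficients $B_{ij} = e^{-2t}(A_{ij}\circ\Phi)$ that decay exponentially. Since $d_g(x,0)$ is comparable to $|x|$ near the origin, the desired bound \eqref{u1} is equivalent to showing $\sup\{|\mathcal{V}(t,\theta)| : t \geq t_0,\ \theta \in \mathbb{S}^{n-1}\} < \infty$ for some $t_0$.

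Next, suppose toward contradiction that $|\mathcal{V}|$ is unbounded. Extract $p_k = (t_k,\theta_k)$ with $t_k \to +\infty$ and $M_k := |\mathcal{V}(p_k)| \to +\infty$, and refine by a Schoen--Zhang-type selection argument so that $p_k$ behaves like a quasi-maximum of $|\mathcal{V}|$ on an expanding geodesic ball. Set $\mu_k = M_k^{-2/(n-2)} \to 0$, $x_k = e^{-t_k}\theta_k$, and consider the rescalings
\[
\tilde{\mathcal{U}}_k(y) := \mu_k^{(n-2)/2}\,\mathcal{U}(x_k + \mu_k y).
\]
These are uniformly bounded on compacta and satisfy a PDE whose linear part has size $\mu_k^2\to 0$ and whose metric converges to $\delta$. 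Passing to a subsequence produces a nonnegative $C^2_{\mathrm{loc}}$ entire limit $\mathcal{U}_\infty$ of the limit system on $\mathbb{R}^n$; by the Druet--Hebey--Vet\'ois classification (the nonsingular analog of Theorem~\ref{class1}) we have $\mathcal{U}_\infty = w_0\,\Lambda$ for an Aubin--Talenti bubble $w_0$ and a vector $\Lambda \in \mathbb{S}^1_+$, so in particular the limit is radially symmetric about its maximum.

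The decisive step is to apply the method of moving planes to each $u_i$, reflecting across hyperplanes orthogonal to a carefully chosen direction near $x_k$ and pushing them toward the origin. The cooperativity of $-A$ (hypothesis~\ref{H1}) means that the system satisfied by the differences $u_i(x) - u_i(x^\lambda)$ has linear coupling of the correct sign, so a weakly coupled cooperative maximum principle allows the procedure to start and proceed. The obstruction to pushing the planes further is then controlled through a Pohozaev-type integral identity whose error terms come from the non-Euclidean geometry $\nabla g$ and from the potential $A(x)$; under the restriction $3\leq n\leq 5$, together with the expansion \ref{H2} when $n=5$, these errors decay strictly faster than the leading Pohozaev contribution from the singularity. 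Consequently the reflected inequality survives in the limit $k\to\infty$ and forces a near-symmetry of $\tilde{\mathcal{U}}_k$ that is incompatible with the bubble $w_0$ sitting at scale $\mu_k$ inside a region whose distance to the singularity is $\sim |x_k|/\mu_k\to\infty$. This contradiction yields the required bound.

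The main obstacle is this last step: the moving planes technique does not preserve the system, since $A(x)$ is not symmetric under reflections and the metric $g$ is not flat, while simultaneously one must keep a two-component strong maximum principle alive throughout. The resolution, adapted from \cite{marques} to the present vectorial setting, is to trade pointwise comparisons for an integrated Pohozaev identity and to absorb the asymmetric error into the critical nonlinearity using the dimension-dependent scaling of $\mu_k$. This absorption succeeds precisely for $3\leq n\leq 5$, which is exactly why the theorem is stated only in low dimensions.
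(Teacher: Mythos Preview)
Your overall architecture---blow up at a sequence of near-maximal points, pass to a bubble limit via Druet--Hebey--V\'etois, then run moving planes to derive a contradiction---matches the paper. However, the decisive step is not what you describe, and two of your hypotheses are misplaced.

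First, the paper does \emph{not} use a Pohozaev identity here. After recentering at the local maximum $x_{2,k}$ and passing to cylindrical coordinates $v_{i,k}(t,\theta)$ on the half-cylinder $\Gamma_k=[-\log(\delta\varepsilon_k^{-1}),\infty)\times\mathbb S^{n-1}$, the paper runs Alexandrov reflection across spheres $\{\lambda\}\times\mathbb S^{n-1}$ on the \emph{sum} $v=v_1+v_2$. The error term $Q_\lambda$ produced by the non-flat metric and the potential $A$ is absorbed pointwise by explicit auxiliary barrier functions $h_{i,\lambda}(t)$ built from solutions of a perturbed linear ODE; one shows $\mathbb L_{\hat g}(h_{1,\lambda},h_{2,\lambda})\geq Q_\lambda$, $h_{i,\lambda}\geq 0$, $h_{i,\lambda}(\lambda)=0$, and crucially $h_\lambda(-\log(\delta\varepsilon^{-1}))\leq c\,\max\{\varepsilon^{(n-2)/2},\varepsilon^{(6-n)/2-\gamma}\}$. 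The contradiction then comes from a boundary touching: there is a first $\lambda_0$ at which $w_{\lambda_0}=v-v_{\lambda_0}-h_{\lambda_0}$ vanishes at the left boundary $t=-\log(\delta\varepsilon^{-1})$, forcing $0<c(\delta)\leq v(-\log(\delta\varepsilon^{-1}),\theta_0)\leq c\varepsilon^{(n-2)/2}+h_{\lambda_0}(-\log(\delta\varepsilon^{-1}))$, which is impossible as $\varepsilon\to 0$ precisely when $3\leq n\leq 5$. Your proposal to ``trade pointwise comparisons for an integrated Pohozaev identity'' does not supply this mechanism, and your description of the contradiction (``near-symmetry incompatible with the bubble'') is too vague to carry the argument.

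Second, neither \ref{H1} nor \ref{H2} is used for this theorem. The paper remarks explicitly after the proof that cooperativity of $-A$ is not needed here; the moving-planes maximum principle is applied to the scalar $w_\lambda$, and the coefficients $b_{i\lambda}$ in front of $v_i-v_{i,\lambda}$ are manifestly positive regardless of the sign of $A_{ij}$. Hypothesis~\ref{H2} enters only later, in the removable-singularity theorem. So your invocation of cooperativity for a ``weakly coupled cooperative maximum principle'' and of \ref{H2} for the $n=5$ error control are both misattributions.
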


\begin{proof} Given $x_{0} \in \Omega$ with $d_{g}(x_{0},0) <1/2$ and $s \in (0,1/4)$ such that $\overline{B_{s}(x_{0})} \subset \Omega$, define
	\begin{equation*}
	f(x) = (s - d_{g}(x,x_{0}))^{\frac{n-2}{2}}|\mathcal{U}(x)|,
	\end{equation*}
	for $x \in \overline{B_{s}(x_{0})}$. It suffices to show that there exists a positive constant $C$ such that any such $f$ satisfies $f(x) \leq C \mbox{ in }  B_{s}(x_{0})$. To see this, for $s=\frac{|x_{0}|}{2}$, we have $f(x_{0}) = s^{\frac{n-2}{2}}|\mathcal{U}(x_{0})| \leq C$. 
	
	The proof is by contradiction. So assume there is no such constant $C$. Then we can find a sequence of points $x_{0,k}$ and positive numbers $s_{k}$ such that, if $x_{1,k}$ denotes the maximum point of the corresponding $f_{k}$, we have
	\begin{equation*}
	f_{k}(x_{1,k}) = (s_{k} - d_{g}(x,x_{0,k}))^{\frac{n-2}{2}}|\mathcal{U}(x_{1,k})| \rightarrow \infty.
	\end{equation*}
	Note that, $0 < s_{k} < 1/4$ implies that $s_{k} - d_{g}(x,x_{0,k}) < 1/4$. This implies that $(s_{k} - d_{g}(x,x_{0,k}))^{\frac{n-2}{2}} < 2^{2-n}$ and therefore $2^{n-2}f_{k}(x) < |\mathcal{U}(x)|$. Then we conclude that $|\mathcal{U}(x_{1,k})| \rightarrow \infty$ and consequently $x_{1,k}\rightarrow 0$.
	
	Let $\varepsilon_{k} = |\mathcal{U}(x_{1,k})|^{-\frac{2}{n-2}}$ and define
	\begin{equation*}
	\tilde{u}_{i,k}(y) = \varepsilon_{k}^{\frac{n-2}{2}}u_{i}(\exp_{x_{1,k}}(\varepsilon_{k}y)).
	\end{equation*}
	for $i \in \{1, 2\}$. Note that $|\tilde{\mathcal{U}}_{k}(0)| = 1$, where $\tilde{\mathcal{U}}_{i} = (\tilde{u}_{1,k}, \tilde{u}_{2,k})$. Also note that the function $\tilde{u}_{i,k}$ is defined for all $y$ such that $|y| \leq \varepsilon_{k}^{-1}(s_{k} - d_{g}(x_{0,k} ,x_{1,k}))$. Moreover, if
$
	d_{g}(x,x_{1,k}) \leq r_{k} := (s_{k} - d_{g}(x_{0,k},x_{1,k}))/2
$
	then
$
	d_{g}(x,x_{0,k}) - d_{g}(x_{0,k},x_{1,k}) \leq d_{g}(x,x_{1,k}) \leq r_{k}.
$
This implies that 
$
	d_{g}(x,x_{0,k}) \leq s_{k} + (-s_{k} + d_{g}(x_{0,k}, x_{1,k}))/2.
$
	Thus
$
	r_{k} \leq s_{k} - d_{g}(x,x_{0,k}).
$
	Therefore
	\begin{equation*}
	r_{k}^{\frac{n-2}{2}}|\mathcal{U}(x)|\leq f_{k}(x) \leq f_{k}(x_{1,k}) = (2r_{k}\varepsilon_{k}^{-1})^{\frac{n-2}{2}} = (2r_{k})^{\frac{n-2}{2}}|\mathcal{U}(x_{1,k})|,
	\end{equation*}
that is,
	\begin{equation*}
	|\mathcal{U}(x)| \leq 2^{\frac{n-2}{2}}|\mathcal{U}(x_{1,k})|
	\end{equation*}
	for all $x$ with $d_{g}(x,x_{1,k}) \leq r_{k}$. This implies that 
	\begin{equation*}
	\tilde{u}_{i,k}(y)= |\mathcal{U}(x_{1,k})|^{-1}u_{i}(\exp_{x_{1,k}}(\varepsilon_{k}y)) \leq |\mathcal{U}(x_{1,k})|^{-1}|\mathcal{U}(x)| \leq 2^{\frac{n-2}{2}},
	\end{equation*}
	for all $y$ with $|\varepsilon_{k}y| \leq r_{k}$, that is, $|y| \leq r_{k}\varepsilon_{k}^{-1} \rightarrow \infty.$
	
	Now, if we define $(\tilde{g}_{k})_{lm}(y) := g_{lm}(\varepsilon_{k}y)$, then we have
	\begin{equation}\label{11}
	-\Delta_{\tilde{g}_{k}}\tilde{u}_{i,k}(y) + \varepsilon_{k}^{2}\sum_{j=1}^{2}\tilde{A}^{k}_{ij}(y)\tilde{u}_{j,k}(y) = \frac{n(n-2)}{4}|\tilde{\mathcal{U}_{k}}(y)|^{\frac{4}{n-2}}\tilde{u}_{i,k}(y),
	\end{equation}
	where $\tilde{A}^{k}_{ij}(y):= A_{ij}(\varepsilon_{k})(\varepsilon_{k}y)$.
	Standard elliptic theory then implies that, after passing to a subsequence, that $\{\tilde{u}_{i,k}\}_{k}$ converge in $C^{2}$ norm on compact subsets of $\mathbb{R}^{n}$ to a positive solution $u_{i,0}$ to
	\begin{equation*}
	\Delta u_{i0} + \frac{n(n-2)}{4}|\mathcal{U}_{0}|^{\frac{4}{n-2}}u_{i,0}=0,
	\end{equation*}
	which satisfies $|\mathcal{U}_{0}(0)| = 1$ and $u_{i,0}(y) \leq 2^{\frac{n-2}{2}}$ for every $y \in \mathbb{R}^{n}$. By a theorem due to Druet and Hebey \cite{DH}, we can conclude that there exists $a \in \mathbb{R}^{n}$, $\lambda > 0$ and $\Lambda \in \mathbb{S}^{1}_{+}$ such that
	\begin{equation*}
	\mathcal{U}_{0}(y) = \left(\frac{2\lambda}{1 + \lambda^2|y-a|^{2}}\right)^{\frac{n-2}{2}}\Lambda.
	\end{equation*}
	Since $|\mathcal{U}_{0}(0)| = 1$, we conclude that $|a| \leq 1$ and $\lambda \in [1/2,1]$.
	
	Now, note that $\mathcal{U}_{0}$ has a nondegenerate maximum point at $a$. Then we conclude that there is a sequence $y_{k} \rightarrow a$ such that $y_{k}$ is a nondegenerate maximum point of $|\tilde{\mathcal{U}}_{k}|$. We can assume $|y_{k}| \leq 2$ and therefore there will be a corresponding local maximum point $x_{2,k}$ of $|\mathcal{U}|$ satisfying $d_{g}(x_{2,k},x_{1,k}) \leq 2\varepsilon_{k}$. If we redefine the functions $\tilde{u}_{i,k}$ replacing $x_{1,k}$ by $x_{2,k}$ we get as before that a subsequence $\{\tilde{u}_{i,k}\}_{k}$ converge in the $C^{2}$ norm on compact subsets of $\mathbb{R}^{n}$ to 
	\begin{equation*}
	\mathcal{U}_{0}(y) = \left(\frac{1}{1 + \frac{1}{4}|y|^{2}}\right)^{\frac{n-2}{2}}\Lambda.
	\end{equation*}
	
	Note that, by construction, we have that $|x_{2,k}| < 7/8$, so we can consider $\tilde{u}_{i,k}$ as defined for $|y| \leq \frac{1}{16}\varepsilon_{k}^{-1}$, with a possible singularity at some point on the sphere of radius $|x_{2,k}|\varepsilon_{k}^{-1} \rightarrow \infty$, where now $\varepsilon_{k} = |\mathcal{U}(x_{2,k})|^{-\frac{2}{n-2}}$.
	
	Let us introduce 
	\begin{equation*}
	v_{i,k}(t,\theta) = |y|^{\frac{n-2}{2}}\tilde{u}_{i,k}(y),
	\end{equation*}
	where $t = -\log|y|$ and $\theta = \frac{y}{|y|}$. These functions are defined for $t > \log(\frac{1}{16}\varepsilon_{k}^{-1})$, with a singularity at some point $(t_{k}',\theta_{k}')$, $t_{k}' = \log (|x_{2,k}\varepsilon_{k}^{-1}|)$. Now define,
	\begin{equation*}
	\mathcal{V}_{0}(t) = |y|^{\frac{n-2}{2}}\mathcal{U}_{0}(y) = \left(e^{t} + \frac{1}{4}e^{-t}\right)^{\frac{2-n}{2}}\Lambda.
	\end{equation*}
	Since $\tilde{\mathcal{U}}_{k} \rightarrow \tilde{\mathcal{U}}_{0}$ in the $C^{2}_{loc}$ topology, we know that given $R> 0$ the inequalities 
	\begin{eqnarray}\label{eq015}
	|\mathcal{V}_{k}(t,\theta) - \mathcal{V}_{0}(t)| \leq R^{-1}e^{\frac{2-n}{2}t},\nonumber\\
	|\partial_{t}\mathcal{V}_{k}(t,\theta) - \mathcal{V}_{0}'(t)| \leq R^{-1}e^{\frac{2-n}{2}t}, \nonumber\\
	|\partial_{t}^{2}\mathcal{V}_{k}(t,\theta) - \mathcal{V}_{0}''(t)| \leq R^{-1}e^{\frac{2-n}{2}t},\nonumber\\
	|\partial_{\theta_{l}}\mathcal{V}_{k}(t,\theta)| \leq R^{-1}e^{\frac{2-n}{2}t}\nonumber,\\ 
	|\partial_{\theta_{l}\theta_{m}}^{2}\mathcal{V}_{k}(t,\theta)| \leq R^{-1}e^{\frac{2-n}{2}t}\nonumber
	\end{eqnarray}
	are satisfied for $t > -\log R$ and sufficiently large $k$.
	
	In particular
	\begin{equation}\label{eq016}
	\partial_{t}v_{j,k}(-\log 3n,\theta) = \frac{2-n}{2}\left((3n)^{-1} + \frac{3n}{4}\right)^{\frac{2-n}{2}}\left(\frac{1}{3n}-\frac{3n}{4}\right) - R^{-1}(3n)^{\frac{n-2}{2}} > 0,
	\end{equation}
	for all $\theta \in \mathbb{S}^{n-1}$ and for $R>0$ large enough. 
	
	For $\delta > 0$ a fixed small number to be chosen later, define
	\begin{equation*}
	\Gamma_{k} = [\log(\delta\varepsilon_{k}^{-1}), \infty)\times \mathbb{S}^{n-1}.	
	\end{equation*}
	Since $\tilde{\mathcal{U}}_{k} \rightarrow \mathcal{U}_{0}$ in the $C^{2}_{loc}$ topology and 
	\begin{equation*}
	|\mathcal{V}_{0}(\log(\delta\varepsilon_{k}^{-1}))| = \left(\delta^{-1}\varepsilon_{k} + \frac{\delta\varepsilon_{k}^{-1}}{4}\right)^{\frac{2-n}{2}} \geq c(\delta) > 0,
	\end{equation*}
	then we obtain
	\begin{equation*}
	|\mathcal{V}_{k}(\log(\delta\varepsilon_{k}^{-1}))| \geq c(\delta) > 0.
	\end{equation*}
	
	We will apply the Alexandrov technique to $v_{i,k}$ on the region $\Gamma_{k}$ reflecting across the spheres $\{\lambda\}\times S^{n-1}$. To simplify the notation we will drop the subscript $k$.
	
	Define $\Gamma_{\lambda} = [-\log(\delta\varepsilon^{-1}),\lambda]$, $\hat{g}_{\lambda}$ the pull-back of the metric $\hat{g}$ by the reflection across the sphere $\{\lambda\}\times S^{n-1}$, $v = v_{1} + v_{2}$, $v_{i,\lambda}(t,\theta) = v_{i}(2\lambda - t,\theta)$ and $v_{\lambda}(t) = v(2\lambda -t,\theta)$. 
	
	By $\eqref{mud}$ we can write 
	\begin{equation}\label{10}
	\begin{aligned}
	&\mathcal{L}_{\hat{g}}(v_{i} - v_{i,\lambda}) - \sum_{j=1}^{2}B_{ij}(v_{j} - v_{j,\lambda}) + \frac{n(n-2)}{4}|\mathcal{V}|^{\frac{4}{n-2}}v_{i} - \frac{n(n-2)}{4}|\mathcal{V}_{\lambda}|^{\frac{4}{n-2}}v_{i,\lambda}\\&
	\qquad \qquad= (\mathcal{L}_{\hat{g}_{\lambda}} - \mathcal{L}_{\hat{g}})v_{i,\lambda} - \sum_{j=1}^{2}(B_{ij}^{\lambda} - B_{ij})v_{j,\lambda}.
	\end{aligned}
	\end{equation}
	
	Note that
	\begin{equation*}
	|\mathcal{V}|^{\frac{4}{n-2}}v - |\mathcal{V}_{\lambda}|^{\frac{4}{n-2}}v_{\lambda} =\sum_i^2b_{i\lambda}(v_i-v_{i,\lambda}),
	\end{equation*}
	where
	$$b_{i\lambda}=v(v_i+v_{i, \lambda})\frac{|\mathcal V|^{\frac{4}{n-2}}-|\mathcal V_\lambda|^{\frac{4}{n-2}}}{|\mathcal V|^2-|\mathcal V_\lambda|^2}+|\mathcal V_\lambda|^{\frac{4}{n-2}}>0.$$
	By $\eqref{10}$ we have
	\begin{equation*}
	\mathbb{L}_{\hat{g}}(v_{1} - v_{1,\lambda}, v_{2} - v_{2,\lambda}) + \sum_i^2b_{i\lambda}(v_i-v_{i,\lambda})= Q_{\lambda},
	\end{equation*}
	where
	\begin{equation}\label{eq0091}
	\begin{aligned}
	Q_{\lambda} = (\mathcal{L}_{\hat{g}_{\lambda}} - \mathcal{L}_{\hat{g}})v_{\lambda} + \sum_{i,j=1}^{2}(B_{ij}^{\lambda} - B_{ij})v_{j,\lambda}
	\end{aligned}
	\end{equation}
	and
	\begin{equation}\label{eq009}
	\begin{aligned}
\mathbb{L}_{\hat{g}}(w_{1}, w_{2}) = \mathcal{L}_{\hat{g}}\left(w_{1}+w_2\right) - \sum_{i,j=1}^{2}B_{ij}w_{j}.
	\end{aligned}
	\end{equation}

	\noindent \textbf{Claim 1:} There exists a constant $c_{1}>0$, not depending on $\delta$, such that $|Q_{\lambda}(t,\theta)| \leq q_{\lambda}(t) = c_{1}\varepsilon^{2}e^{\frac{n-6}{2}t}e^{(2-n)\lambda}$.
	
We observe that $\frac{\partial_{r}\sqrt{|g|}}{\sqrt{|g|}} = O(|x|)$ and $v_{\lambda}(t) = O(e^{\frac{2-n}{2}(2\lambda - t)})$. This and \eqref{cur} implies that 
	\begin{equation*}
	\begin{array}{rcl}
	\left|R_{\hat{g}_{\lambda}} - e^{-2t}R_{\Phi^{*}\tilde{g}_{\lambda}} - (R_{\hat{g}} - e^{-2t}R_{\Phi^{*}\tilde{g}})\right|v_{\lambda}(t,\theta) &  \leq & C\varepsilon^{2}e^{-2t}e^{\frac{2-n}{2}(2\lambda -t)}\\ 
	& = & C\varepsilon^{2}e^{\frac{n-6}{2}t}e^{(2-n)\lambda}.
	\end{array}
	\end{equation*}
	Since $\hat{g} = e^{2t}\Phi^{*}\tilde{g}$ and $\tilde{g}_{ij} = \delta_{ij} + O(\varepsilon^{2}|y|^{2})$ in normal coordinates, then $\hat{g} = dt^{2} + d\theta^{2} + O(\varepsilon^{2}e^{-2t})$. It follows that
	\begin{equation*}
	\left|(\Delta_{\hat{g}_{\lambda}} - \Delta_{\hat{g}})v_{\lambda}\right| \leq C\varepsilon^{2}e^{\frac{n-6}{2}t}e^{(2-n)\lambda}.
	\end{equation*}
	Also by $\eqref{eq02}$ we have 
	\begin{equation*}
	\left|\sum_{i,j=1}^{2}(B_{ij}^{\lambda} - B_{ij})v_{j,\lambda}\right| \leq C\varepsilon^{2}e^{\frac{n-6}{2}t}e^{(2-n)\lambda},
	\end{equation*}
	proving the first claim.
	\\
	
	\noindent\textbf{Claim 2: } Suppose $3\leq n\leq 5$, and let $\gamma > 0$ be a small number. Then there exists a family of functions $(h_{1\lambda}(t), h_{2\lambda}(t))$, defined on $\Gamma_{\lambda}$, satisfying the following properties 
	\begin{eqnarray}
	h_{i, \lambda}(\lambda) = 0;\\
	h_{i, \lambda} \geq 0;\\
	\label{aux1}	\mathbb{L}_{\hat{g}}(h_{1, \lambda}, h_{2, \lambda}) \geq Q_{\lambda};\\
	\label{aux}	h_{i, \lambda} \leq v_{i} - v_{i, \lambda}, \mbox{ if } \lambda \mbox{ is sufficiently large}
	\end{eqnarray}
	and 
	\begin{equation}\label{eq004}
	h_{\lambda}(-\log(\delta\varepsilon^{-1})) \leq c_{3}\max\left\{\varepsilon^{\frac{n-2}{2}}, \varepsilon^{\frac{6-n}{2}-\gamma}\right\},
	\end{equation}
	where $h_{\lambda} = h_{1 \lambda}+h_{2 \lambda}$, for some positive constant $c$ which depends only on $\delta$.
	
	Remember that $\hat{g} = e^{\frac{2-n}{2}t}\Phi^{*}\tilde{g}$, then by \eqref{confc} we obtain 
	\begin{equation*}
	L_{\hat{g}}f = e^{-\frac{n+2}{2}t}L_{\Phi^{*}\tilde{g}}(e^{\frac{n-2}{2}t}f) = e^{-\frac{n+2}{2}t}\left(\Delta_{\tilde{g}}(|y|^{\frac{2-n}{2}}\tilde{f}) - \frac{n-2}{4(n-1)}|y|^{\frac{2-n}{2}}R_{\tilde{g}}\tilde{f}\right)\circ \Phi,
	\end{equation*} 
	where $\tilde{f} = f\circ\Phi^{-1}$. Hence
	\begin{equation*}
	\mathcal{L}_{\hat{g}}f = L_{\hat{g}}f + \frac{n-2}{4(n-1)}e^{-2t}R_{\Phi^{*}g}f = e^{-\frac{n+2}{2}t}\Delta_{\tilde{g}}(|y|^{\frac{2-n}{2}}\tilde{f})\circ \Phi.
	\end{equation*}
	So, if $f$ depends only on $t$, then we can use the laplacian to the radial functions to obtain that
	\begin{equation*}
	\mathcal{L}_{\hat{g}}f = f'' + O(\varepsilon^{2}e^{-2t})-f' \left(\left(\frac{n-2}{2}\right)^{2} + O(\varepsilon^{2}e^{-2t})\right)f.
	\end{equation*}
Besides by $\eqref{eq02}$ we obtain
$
	B_{ij} = O(\varepsilon^{2}e^{-2t}).
$
	Thus, if $f =  f_{1}+f_2$, then 
	\begin{equation*}
	\mathbb{L}_{\widehat{g}}(f_{1}, f_{2}) = f'' + O(\varepsilon^{2}e^{-2t})f' -  \left(\left(\frac{n-2}{2}\right)^{2} + O(\varepsilon^{2}e^{-2t})\right)f + \sum_{j=1}^{2}O_{j}(\varepsilon^{2}e^{-2t})f_{j}.
	\end{equation*}
	Given a small number $\gamma >0$, let $\overline{L}$ be the linear operator
	\begin{equation*}
	\overline{L}(f) = f'' + \gamma f' - \left(\left(\frac{n-2}{2}\right)^{2} + \gamma \right)f.
	\end{equation*}
	Let $\gamma_{1} = \frac{8-n}{2}\gamma >0$ and $a(n) = \frac{1}{2(4-n)-\gamma_{1}}$. Define
	\begin{equation}
	h_{i,\lambda}(t) = \frac{a(n)}{2}c_{1}\varepsilon^{2}e^{(2-n)\lambda}e^{\frac{n-6}{2}t}(1-e^{(4-n-\gamma_{2})(t-\lambda)}),
	\end{equation}
	where $\gamma_{2} > 0$ is chosen such that the function $e^{(\frac{2-n}{2}-\gamma_{2})t}$ is in the kernel of $\overline{L}$. Note that $\gamma_{1}$ and $\gamma_{2}$ are small.
	
	Immediately we have $h_{i,\lambda}(\lambda) = 0$, $h_{i, \lambda} \geq 0$ and $h_{i, \lambda}' \leq 0$ in $(-\infty,\lambda]$.
	
	Since $t \geq -\log(\delta\varepsilon^{-1})$, then $\varepsilon^{2}e^{-2t} \leq \delta^{2}$. Choosing $\delta^{2} <<\gamma$, then if $f \geq 0$ and $f_{i}'\leq 0$, we get
	\begin{equation*}
	\mathbb{L}_{\hat{g}}(f_{1}, f_{2}) \geq f'' + \gamma f' -  \left(\left(\frac{n-2}{2}\right)^{2} + \gamma)\right)f = \overline{L}(f).
	\end{equation*}
	Now observe that
 $\overline{L}(h_{\lambda})=q_\lambda$.
	Therefore
	\begin{equation*}
	\mathbb{L}_{\hat{g}}(h_{1,\lambda}, h_{2,\lambda}) \geq \overline{L}(h_{\lambda}) = q_{\lambda} \geq |Q_{\lambda}|.
	\end{equation*}
	Besides,
	\begin{equation}\label{eq005}
	h_{\lambda}(-\log(\delta\varepsilon^{-1})) = a(n)c_{1}\left(\delta^{\frac{6-n}{2}}e^{(2-n)\lambda}\varepsilon^{\frac{n-2}{2}} - \delta^{\frac{n-2}{2} + \gamma_{2}}e^{(-2 + \gamma_{2})\lambda}\varepsilon^{\frac{6-n}{2} - \gamma_{2}}\right),
	\end{equation}
	this give us the estimate (\ref{eq004}).
	
	Since $h_{\lambda}' \leq 0$, then by $\eqref{eq005}$ we obtain that 
	\begin{equation}\label{eq006}
	\max_{\Gamma_{\lambda}}h_{\lambda} \rightarrow 0
	\end{equation}
	as $\lambda \rightarrow \infty$, where $\delta$ and $\varepsilon$ are fixed.
	
	Suppose $\delta$, $\varepsilon$ and $t_{0}$ are fixed with $t_{0}$ large. We know that $v = O(e^{\frac{n-2}{2}t})$. Then, on $\Gamma_{t_{0}} = [-\log(\delta\varepsilon^{-1}),t_{0}]\times \mathbb{S}^{n-1}$ we have, 
	\begin{equation*}
	v_{\lambda}(t) \leq Ce^{\frac{2-n}{2}(2\lambda - t)}.
	\end{equation*}
	Then, by $\eqref{eq006}$ we have that $w_{\lambda} = v - v_{\lambda} - h_{\lambda} \geq 0$ on $\Gamma_{t_{0}}$ for sufficiently large $\lambda$. Let us show that $w_{\lambda} \geq 0$ for all $t \in [t_{0},\lambda]$.
	
	Note that, 
	\begin{equation*}
	h_{\lambda}' = \frac{n-6}{2}a(n)c_{1}\varepsilon e^{(2-n)\lambda}e^{\frac{n-6}{2}t}\left(1 - \frac{2}{n-6}\left(\frac{2-n}{2} - \gamma_{2}\right)e^{(4-n-\gamma_{2})(t-\lambda)}\right).
	\end{equation*}
	Hence, for all $t \in [t_{0},\lambda]$, we have
	\begin{equation*}
	|h_{\lambda}'(t)| \leq C(\delta,\varepsilon,t_{0})(e^{(2-n)\lambda} + e^{(-2+\gamma_{2})\lambda}).
	\end{equation*}
	Now
	\begin{equation}\label{eq007}
	\partial_{t} w_{\lambda}(t,\theta) = \partial_{t} v(t,\theta) + \partial_{t} v(2\lambda -t,\theta) - h_{\lambda}'(t).
	\end{equation}
	Thus, because of the previous estimates and $\eqref{eq015}$, when $t \in [t_{0},\lambda]$, we have that
	\begin{equation*}
	\begin{aligned}
	\partial_{t} w_{\lambda}(t,\theta) \leq -Ce^{\frac{2-n}{2}t} - Ce^{\frac{2-n}{2}(2\lambda -t)} - C(\delta,\varepsilon,t_{0})\left(e^{(2-n)\lambda} + e^{(-2+\gamma_{2})\lambda}\right)\\ \leq -Ce^{\frac{2-n}{\lambda}} < 0,
	\end{aligned}
	\end{equation*}
	for $t_{0}$ and $\lambda$ sufficiently large.
	
	Since $w_{\lambda}(\lambda, \theta) = 0$ for all $\theta \in \mathbb{S}^{n-1}$, then $w_{\lambda} \geq 0$ for all $t \in [t_{0},\lambda]$, for $\lambda$ sufficiently large. This finishes the proof of the Claim 2.
	
Define
	\begin{equation*}
	w_{i, \lambda} = v_i - v_{i, \lambda} - h_{i\lambda} \geq 0.
	\end{equation*}
Note that
	\begin{equation*}
	w_{i, \lambda}(\lambda,\theta) = 0, \mbox{ for all } \theta \in \mathbb{S}^{n-1}
	\end{equation*}
	and
	\begin{equation*}
	\mathbb{L}_{\hat{g}}(w_{1, \lambda}, w_{2, \lambda}) + \sum_i^2b_{i\lambda}w_{i, \lambda} = Q_{\lambda} - \mathbb{L}_{\hat{g}}(h_{1, \lambda}, h_{2, \lambda})\leq 0.
	\end{equation*} 
	
	\noindent\textbf{Claim 3:} There exist $\lambda_{0} > -\log(3n)$ and $\theta_{0} \in \mathbb{S}^{n-1}$ such that 
	\begin{equation*}
	w_{\lambda_{0}}(-\log(\delta\varepsilon^{-1}),\theta_{0}) = 0.
	\end{equation*}
	
	Define
	\begin{equation*}
	\lambda_{0} = \inf\{\lambda_{1}; w_{\lambda}(t,\theta) \geq 0 \mbox{ in } \Gamma_{\lambda}, \forall\lambda\geq\lambda_{1} \}.
	\end{equation*}
	Note that $\eqref{aux}$ implies that this set is not empty. Besides, if we take $\lambda = -\log(3n)$, then by  $\eqref{eq016}$ and $\eqref{eq007}$ we get
	\begin{equation*}
	\partial_{t} w_{\lambda}(-\log(3n),\theta) = 2\partial_{t} v(-\log(3n),\theta) - h'_{\lambda}(-\log(3n)) >0.
	\end{equation*}
	Since $w_{\lambda}(-\log(3n),\theta) = 0$, then $\lambda_{0} > -\log(3n)$.
	
	By continuity, $w_{\lambda_{0}} \geq 0$ in $\Gamma_{\lambda_{0}}$. Suppose the claim is false. Then $w_{\lambda_{0}}(-\log(\delta\varepsilon^{-1}),\theta) > 0$ for all $\theta \in \mathbb{S}^{n-1}$. We notice that by $\eqref{eq02}$, $\eqref{eq009}$ and $\eqref{aux1}$ we can apply the Maximum Principle, since for $\varepsilon > 0$ small enough we have
	\begin{equation}\label{eq018}
	\Delta_{\hat{g}}w_{\lambda_{0}} - Dw_{\lambda_{0}}\leq \mathbb{L}_{\hat{g}}(w_{1\lambda_{0}}, w_{2\lambda_{0}})  =Q_{\lambda_0}-\mathbb{L}_{\hat{g}}(h_{1\lambda_{0}}, h_{2\lambda_{0}}) - \sum_i^2b_{i\lambda}w_{i\lambda_{0}} \leq 0,
	\end{equation}
	where $D$ is a positive function. This implies that $w_{\lambda_{0}}(t,\theta) > 0$ for every $-\log(\delta\varepsilon^{-1}) < t <\lambda_{0}$ and $\theta \in \mathbb{S}^{n-1}$, since on the boundary $\partial\Gamma_{\lambda_{0}}$ we have $w_{\lambda_{0}} \geq 0$.
	
	From definition of $\lambda_{0}$, there exist a sequence $\{\lambda_{j}\}$ such that $\lambda_{j} < \lambda_{0}$ and $\lambda_{j}\rightarrow \lambda_{0}$, a sequence $\{(t_{j},\lambda_{j})\}$ of interior minimum points of $w_{\lambda_{j}}$ such that $(t_{j},\theta_{j})\rightarrow (t^{*},\theta^{*})$ with $w_{\lambda_{j}}(t_{j},\theta_{j}) < 0$. Taking the limit we get $w_{\lambda_{0}}(t^{*},\theta^{*}) = 0$ and $\nabla w_{\lambda_{0}}(t^{*},\theta^{*}) = 0$. Therefore $t^{*} = \lambda_{0}$, but this is a contradiction to the Hopf's lemma. This proves the Claim 3.
	\\
	
	With this claims at hand, let us prove the theorem. By Claim 3, there exist $\lambda_{0} > -\log(3n)$ and $\theta_{0} \in \mathbb{S}^{n-1}$ such that 
	\begin{equation*}
	w_{\lambda_{0}}(-\log(\delta\varepsilon^{-1}),\theta_{0}) = 0.
	\end{equation*}
	Then, by definition of $w_{\lambda_{0}}$ and $\eqref{eq015}$ we get
	\begin{equation*}
	0 < c(\delta) \leq v(-\log(\delta\varepsilon^{-1}),\theta_{0}) = (v_{\lambda_{0}} + h_{\lambda_{0}})(-\log(\delta\varepsilon^{-1}),\theta_{0}).
	\end{equation*}
	But $v(t,\theta) = O(e^{\frac{2-n}{2}(2\lambda - t)})$ implies that
	\begin{equation*}
	v(-\log(\delta\varepsilon^{-1}),\theta_{0}) \leq c(\lambda_{0},\delta)\varepsilon^{\frac{n-2}{2}}
	\end{equation*}
	and so 
	\begin{equation}\label{eq020}
	0 < c(\delta) \leq c(\lambda_{0},\delta)\varepsilon^{\frac{n-2}{2}} + h_{\lambda_{0}}(-\log(\delta\varepsilon^{-1}),\theta_{0}).
	\end{equation}
	
	But by $\eqref{eq004}$ we obtain 
	\begin{equation*}
	h_{\lambda_{0}}(-\log(\delta\varepsilon^{-1}),\theta_{0}) \leq c\varepsilon^{\frac{1}{2}}
	\end{equation*}
	for $n=3$,
	\begin{equation*}
	h_{\lambda_{0}}(-\log(\delta\varepsilon^{-1}),\theta_{0}) \leq c\varepsilon^{1-\gamma}
	\end{equation*}
	for $n=4$ and
	\begin{equation*}
	h_{\lambda_{0}}(-\log(\delta\varepsilon^{-1}),\theta_{0}) \leq c\varepsilon^{\frac{1}{2}-\gamma}
	\end{equation*}
	for $n=5$, which contradicts $\eqref{eq020}$, since we can take the limit $\varepsilon\rightarrow 0$.
	
	This completes the proof of the theorem.
\end{proof}

We notice that in the proof of the Theorem \ref{upper} the condition that $-A$ is cooperative is not needed. Besides that, this result holds for systems with any number of equations.

As a consequence of the upper bound we get the following \emph{spherical Harnack inequality}.

\begin{corollary}\label{DesHarnack}
	Suppose $\mathcal{U}$ is a positive smooth solution of $\eqref{eq000}$ in $\Omega$ with $3\leq n\leq 5$ and that the potential $A$ satisfies the hypotheses \ref{H1}. Then there exists a constant $c_{1} > 0$ such that
	\begin{equation}\label{Harnack}
	\max_{|x|=r} u_{i} \leq c_{1}\min_{|x|=r}u_{i}
	\end{equation} 
	for every $0 < r < \frac{1}{4}$. Moreover, $|\nabla u_{i}| \leq c_{1}|x|^{-1}u_{i}$ and $|\nabla^{2}u_{i}|\leq c_{1}|x|^{-2}u_{i}$.
\end{corollary}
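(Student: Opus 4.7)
The plan is to rescale around the origin and reduce the statement to a classical Harnack estimate on a fixed annulus, exploiting the cooperativity hypothesis \ref{H1} together with the uniform upper bound provided by Theorem~\ref{upper}. Fix $r \in (0,1/4)$ and set $\tilde u_{i,r}(y) := r^{(n-2)/2} u_i(ry)$ and $g_r(y) := g(ry)$ on the annulus $A := \{y \in \mathbb R^n : 1/2 \leq |y| \leq 2\}$. A direct computation shows that the rescaled system reads
\begin{equation*}
-\Delta_{g_r} \tilde u_{i,r} + r^2 \sum_{j=1}^{2} A_{ij}(ry)\,\tilde u_{j,r} = \frac{n(n-2)}{4} |\tilde{\mathcal U}_r|^{\frac{4}{n-2}} \tilde u_{i,r},
\end{equation*}
and Theorem~\ref{upper} supplies a uniform bound $|\tilde{\mathcal U}_r(y)| \leq C_0$ on $A$, independent of $r$. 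Since $g_r$ converges smoothly to the Euclidean metric as $r \to 0$, the operators $\Delta_{g_r}$ are uniformly elliptic with $C^1$-bounded coefficients, and the nonlinearity's coefficient is uniformly bounded.

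For each $i$, I would then rewrite the equation in the form
\begin{equation*}
-\Delta_{g_r} \tilde u_{i,r} + c_i(y)\,\tilde u_{i,r} = f_i(y),
\end{equation*}
with $c_i(y) := r^2 A_{ii}(ry) - \tfrac{n(n-2)}{4}|\tilde{\mathcal U}_r|^{\frac{4}{n-2}}$ and $f_i(y) := -r^2 \sum_{j \neq i} A_{ij}(ry)\,\tilde u_{j,r}(y)$. The cooperativity hypothesis \ref{H1} forces $f_i \geq 0$, and both $c_i$ and $f_i$ are uniformly bounded in $L^\infty(A)$. Applying the classical Harnack inequality for positive solutions of linear second-order uniformly elliptic equations with $L^\infty$ coefficients and bounded nonnegative right-hand side (cf.\ Gilbarg-Trudinger, Theorem 8.20) to $\tilde u_{i,r}$ then yields
\begin{equation*}
\sup_{|y|=1} \tilde u_{i,r} \leq C \left( \inf_{|y|=1} \tilde u_{i,r} + \|f_i\|_{L^\infty(A)}\right),
\end{equation*}
with $C$ independent of $r$. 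Since $\|f_i\|_{L^\infty(A)} = O(r^2)$, the additive correction translates in original variables to a lower-order term $O(r^{(6-n)/2})$, which is absorbed into the dominant behaviour $\min u_i \gtrsim |x|^{(2-n)/2}$ expected near a nonremovable singularity, producing the clean spherical Harnack $\max_{|x|=r} u_i \leq c_1 \min_{|x|=r} u_i$.

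For the derivative bounds, I would normalize $w_{i,r}(y) := \tilde u_{i,r}(y)/\tilde u_{i,r}(y_0)$ for a fixed $y_0 \in \{|y|=1\}$; by the Harnack estimate just obtained, $w_{i,r}$ is comparable to $1$ on a neighborhood of the unit sphere, and it solves an elliptic equation with uniformly bounded coefficients and right-hand side, so standard Schauder estimates yield $\|w_{i,r}\|_{C^{2,\alpha}} \leq C$. Unrolling the rescaling, which multiplies gradients by $r$ and Hessians by $r^2$, produces the stated inequalities $|\nabla u_i| \leq c_1 |x|^{-1} u_i$ and $|\nabla^2 u_i| \leq c_1 |x|^{-2} u_i$. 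The main obstacle is the absorption of the additive correction from the inhomogeneity $f_i$ into a purely multiplicative Harnack constant; cooperativity is precisely the ingredient that keeps $f_i \geq 0$ and thus allows each $\tilde u_{i,r}$ to be handled as a positive solution of a scalar linear elliptic equation with well-behaved right-hand side, rather than coupled to the other components through a term of indeterminate sign.
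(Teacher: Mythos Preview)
Your strategy---rescale to $\tilde u_{i,r}(y)=r^{(n-2)/2}u_i(ry)$, use Theorem~\ref{upper} for a uniform bound on a fixed annulus, separate the off-diagonal coupling as a nonnegative forcing $f_i$ via \ref{H1}, and apply the scalar Harnack inequality---is precisely the paper's argument. The paper simply invokes ``the Harnack inequality for linear elliptic equations'' without commenting on the additive $\|f_i\|$ term that you correctly isolate.

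The problem is the way you absorb that additive term. You appeal to $\min u_i \gtrsim |x|^{(2-n)/2}$ ``near a nonremovable singularity,'' but this is Corollary~\ref{cc1}, which is established \emph{after} the present corollary and whose proof (through Lemmas~\ref{n1}, \ref{n2}, \ref{l2} and Theorem~\ref{lower}) uses the spherical Harnack inequality you are trying to prove; the argument is circular. It also restricts to the singular case, whereas the statement must cover every positive solution, and it presupposes a componentwise lower bound while Corollary~\ref{cc1} only controls $|\mathcal U|$. What is actually needed is an a~priori reason, independent of later results, why $\inf_{|y|=1}\tilde u_{i,r}$ cannot be $o(r^2)$; one route is to exploit that $u_i$ is a positive supersolution of a Schr\"odinger operator with bounded potential in the punctured ball and invoke a removable-singularity/comparison argument for such supersolutions, but you have not supplied this step.
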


\begin{proof}
	Define $u_{i,r}(y) = r^{\frac{n-2}{2}}u_{i}(ry)$, for every $0 < r < 1/4$ and $|y| < r^{-1}$. Then the upper bound given by Theorem $\ref{upper}$ implies that $u_{i,r}(y) \leq c|y|^{\frac{2-n}{2}}$, for $|y| < \frac{1}{2}r^{-1}$. In particular, if $\frac{1}{2} \leq |y| \leq \frac{3}{2}$, we have that $u_{i,r}(y) \leq 2^{\frac{n-2}{2}}c$.
	
	Moreover
	\begin{equation*}
	\Delta_{g_{r}}u_{i,r}(y) - r^{2}\sum_{j=1}^{2}A_{ij}(ry)u_{j,r}(y) + \frac{n(n-2)}{4}|\mathcal{U}_{r}(y)|^{\frac{4}{n-2}}u_{i,r}(y) = 0.
	\end{equation*}
	where $(g_{r})_{ij}(y) = g_{ij}(ry)$, which implies that
	\begin{equation*}
	\Delta_{g_{r}}u_{i,r}(y) - r^{2}A_{ii}(ry)u_{i,r}(y) = r^{2}A_{ij}(ry)u_{j,r}(y)- \frac{n(n-2)}{4}|\mathcal{U}_{r}(y)|^{\frac{4}{n-2}}u_{i,r}(y).
	\end{equation*}
	where $j\neq i$. Using that $-A$ is cooperative the Harnack inequality for linear elliptic equations and standard elliptic theory imply that there exists $c_{1} > 0$, not depending on $r$, such that
	\begin{equation*}
	\max_{|x|=1}u_{i,r} \leq c_{1} \min_{|x|=1}u_{i,r},
	\end{equation*}
	and $|\nabla u_{i,r}| + |\nabla^{2}u_{i,r}| \leq c_{1}u_{1,r}$ on the sphere of radius 1.
	
	This finishes the proof of the corollary.
\end{proof}

\subsection{Pohozaev invariant and removable singularities}

In this section we will proof a Pohozoaev Identity and define the Pohozaev invariant of a solution of \eqref{eq000}. Also we will prove a removable singularity theorem. As a consequence we will derive a fundamental lower bound near the isolated singularity. 

Given $\mathcal{U}$ a positive solution to the system $\eqref{eq000}$, we define $P(r,\mathcal{U})$ as in $\eqref{P1}$.
The following lemma gives the Pohozaev-type identity we are interested in.
\begin{lemma}[Pohozaev Identity]\label{lemPI} Given $0 < s \leq r < 1$, we have
	\begin{eqnarray*}
		\begin{array}{l}
			\displaystyle P(r,\mathcal{U}) - P(s,\mathcal{U}) = \\
			=\displaystyle-\sum_{i}\int_{B_{r}\backslash B_{s}} \left( x\cdot \nabla u_{i} + \frac{n-2}{2}u_{i}\right)\left((\Delta_{g} - \Delta)u_{i} - \sum_{j}A_{ij}(x)u_{j}\right).
		\end{array}	
	\end{eqnarray*}	
\end{lemma}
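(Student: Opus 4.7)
The plan is a direct Pohozaev-type computation in Euclidean coordinates. First, I would rewrite the $i$-th equation of \eqref{eq000} in the form
\[
\Delta u_i + \frac{n(n-2)}{4}|\mathcal{U}|^{\frac{4}{n-2}}u_i = -(\Delta_g-\Delta)u_i + \sum_{j=1}^{2}A_{ij}u_j,
\]
so that the flat Laplacian is isolated on the left. Testing each equation against the conformal/scaling generator $x\cdot\nabla u_i + \frac{n-2}{2}u_i$, summing over $i$, and integrating over the annulus $B_r\setminus B_s$, the identity to be proved becomes a matter of showing that the boundary contributions on the left reproduce $P(r,\mathcal U)-P(s,\mathcal U)$, while the right-hand side is precisely the asserted residual bulk term.

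For the linear part, two integrations by parts in $\sum_i\int\Delta u_i(x\cdot\nabla u_i)$ produce a bulk term $\frac{n-2}{2}|\nabla u_i|^2$ that exactly cancels the $-\frac{n-2}{2}|\nabla u_i|^2$ coming from $\sum_i\int\Delta u_i\cdot\frac{n-2}{2}u_i$. Using $x\cdot\nu=r$ and $x\cdot\nabla u_i = r\partial_\nu u_i$ on $\partial B_r$, what remains is exactly the first three summands of $P(r,\mathcal U)$, with the corresponding $\partial B_s$ contribution entering with the opposite sign because the outward normal to the annulus at $\partial B_s$ is $-x/s$. For the critical nonlinearity I would use $\sum_i u_i\nabla u_i = \frac{1}{2}\nabla|\mathcal U|^2$ together with $|\mathcal U|^{\frac{4}{n-2}}\nabla|\mathcal U|^2 = \frac{n-2}{n}\nabla|\mathcal U|^{\frac{2n}{n-2}}$ to rewrite
\[
\sum_i\frac{n(n-2)}{4}|\mathcal U|^{\frac{4}{n-2}}u_i\Big(x\cdot\nabla u_i+\frac{n-2}{2}u_i\Big) = \frac{(n-2)^2}{8}\,x\cdot\nabla|\mathcal U|^{\frac{2n}{n-2}} + \frac{n(n-2)^2}{8}|\mathcal U|^{\frac{2n}{n-2}},
\]
and integrate the divergence term by parts. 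The resulting bulk $-\frac{n(n-2)^2}{8}\int|\mathcal U|^{\frac{2n}{n-2}}$ exactly cancels the second term above, leaving only the boundary contribution $r\frac{(n-2)^2}{8}|\mathcal U|^{\frac{2n}{n-2}}$ on $\partial B_r$ (and its counterpart on $\partial B_s$), which is precisely the last summand of $P(r,\mathcal U)$.

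Collecting the boundary contributions gives $P(r,\mathcal U)-P(s,\mathcal U)$ on the left, while the remaining volume integral is the asserted $-\sum_i\int_{B_r\setminus B_s}\big((\Delta_g-\Delta)u_i-\sum_j A_{ij}u_j\big)\big(x\cdot\nabla u_i+\frac{n-2}{2}u_i\big)$. No step is a genuine obstacle; the argument is essentially bookkeeping. The only points requiring care are the orientation of the inner boundary $\partial B_s$ and the exact cancellation of the two bulk $|\mathcal U|^{\frac{2n}{n-2}}$ terms in the nonlinear computation, which is precisely where the criticality of the exponent $\frac{n+2}{n-2}$ enters in an essential way.
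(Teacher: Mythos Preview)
Your proposal is correct and follows essentially the same Pohozaev computation as the paper: rewrite the system to isolate the flat Laplacian, test against the conformal generator, and integrate by parts so that the critical exponent forces the bulk nonlinear terms to cancel. The only cosmetic difference is that the paper tests separately against $x\cdot\nabla u_i$ and against $u_i$ and then combines (expressing $\int|\nabla u_i|^2$ from the second identity and substituting into the first), whereas you test directly against $x\cdot\nabla u_i+\tfrac{n-2}{2}u_i$; the resulting cancellations and boundary terms are identical.
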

\begin{proof} By \eqref{eq000} we get that
	\begin{equation*}
	\begin{array}{l}
	 -\displaystyle\int_{B_{r}\backslash B_{s}} x\cdot \nabla u_{i}\left(\Delta u_{i} + \frac{n(n-2)}{4}|\mathcal{U}|^{\frac{4}{n-2}}u_{i}\right) =\\
	 \displaystyle=\int_{B_{r}\backslash B_{s}}x\cdot \nabla u_{i}\left((\Delta_{g} - \Delta)u_{i} - \sum_{j}A_{ij}(x)u_{j}\right).
	\end{array}
	\end{equation*}
Using integration by parts we get
	\begin{equation}\label{p10}
	\begin{array}{rcl}
	\displaystyle\int_{B_{r}\backslash B_{s}} x\cdot \nabla u_{i}\Delta u_{i} & = &\displaystyle \frac{n-2}{2}\int_{B_{r}\backslash B_{s}} |\nabla u_{i}|^{2} - \frac{r}{2}\int_{\partial B_{r}}|\nabla u_{i}|^{2}  \\
	&&\displaystyle + \frac{s}{2}\int_{\partial B_{s}} |\nabla u_{i}|^{2}+r\int_{\partial B_{r}}\left|\frac{\partial u_{i}}{\partial \nu}\right|^{2} -  r\int_{\partial B_{s}}\left|\frac{\partial u_{i}}{\partial \nu}\right|^{2}. 
	\end{array}
	\end{equation}
	
	On the other hand, integrating over $B_{r}\backslash B_{s}$ we have
	\begin{equation*}
	\begin{array}{l}
\displaystyle \int_{B_{r}\backslash B_{s}} u_{i}\left((\Delta_{g} - \Delta)u_{i} - \sum_{j}A_{ij}(x)u_{j}\right)=\\
\displaystyle= -\int_{B_{r}\backslash B_{s}}u_{i}\left(\Delta u_{i} + \frac{n(n-2)}{4}|\mathcal{U}|^{\frac{4}{n-2}}u_{i}\right)  \\
	\displaystyle=\int_{B_{r}\backslash B_{s}}|\nabla u_{i}|^{2} - \int_{\partial B_{r}}u_{i}\frac{\partial u_{i}}{\partial \nu} + \int_{\partial B_{s}}u_{i}\frac{\partial u_{i}}{\partial \nu} - \frac{n(n-2)}{4}\int_{B_{r}\backslash B_{s}} |\mathcal{U}|^{\frac{4}{n-2}}u_{i}^{2},
	\end{array}
	\end{equation*}
	which implies that 
	\begin{equation*}
	\begin{array}{rcl}
	\displaystyle\int_{B_{r}\backslash B_{s}}|\nabla u_{i}|^{2} & = &\displaystyle \int_{\partial B_{r}}u_{i}\frac{\partial u_{i}}{\partial \nu} - \int_{\partial B_{s}}u_{i}\frac{\partial u_{i}}{\partial \nu} + \frac{n(n-2)}{4}\int_{B_{r}\backslash B_{s}} |\mathcal{U}|^{\frac{4}{n-2}}u_{i}^{2}\\
	&&\displaystyle + \int_{B_{r}\backslash B_{s}} u_{i}\left((\Delta_{g} - \Delta)u_{i} - \sum_{j}A_{ij}(x)u_{j}\right).
	\end{array}
	\end{equation*}
Thus, by \eqref{p10} we conclude that
	\begin{equation}\label{p1}
	\begin{aligned}
	&\sum_{i}\int_{B_{r}\backslash B_{s}} \left(x\cdot \nabla u_{i} + \frac{n-2}{2}u_{i}\right)\left((\Delta_{g} - \Delta)u_{i} - \sum_{j}A_{ij}(x)u_{j}\right) \\
	&=- \frac{n(n-2)^2}{8}\int_{B_{r}\backslash B_{s}}|\mathcal{U}|^{\frac{2n}{n-2}}  - \frac{n(n-2)}{4}\sum_{i}\int_{B_{r}\backslash B_{s}}(x\cdot \nabla u_{i}|\mathcal{U}|^{\frac{4}{n-2}}u_{i}).
	\end{aligned}
	\end{equation}

Using that $\frac{n-2}{2n}\partial_{k}(|\mathcal{U}|^{\frac{2n}{n-2}}) = \sum_{i}|\mathcal{U}|^{\frac{4}{n-2}}u_{i}\partial_{k}u_{i}$ we obtain
	\begin{equation*}
	\begin{array}{rcl}
	\displaystyle- \sum_{i}\int_{B_{r}\backslash B_{s}}x\cdot \nabla u_{i}|\mathcal{U}|^{\frac{4}{n-2}}u_{i} & = & 	\displaystyle\frac{n-2}{2}\int_{B_{r}\backslash B_{s}}|\mathcal{U}|^{\frac{2n}{n-2}} - r\frac{n-2}{2n}\int_{\partial B_{r}} |\mathcal{U}|^{\frac{2n}{n-2}} \\
	& &	\displaystyle+ s\frac{n-2}{2n}\int_{\partial B_{s}} |\mathcal{U}|^{\frac{2n}{n-2}}
	\end{array}
	\end{equation*}
	and consequently, replacing in \eqref{p1}, we conclude the result.
	
\end{proof}

In the case of the limit system we saw in $\eqref{pohid}$ that $P(r,\mathcal{U})$ does not depend on $r$, therefore is an invariant of the solution $\mathcal{U}$.

Motivated by \cite{marques} we define the Pohozaev invariant of a solution $\mathcal U$ of \eqref{eq000} by using the Theorem \ref{upper}. Since $g_{ij} = \delta_{ij} + O(|x|^{2})$, then we get that
\begin{equation}\label{23}
\left|\sum_i\left( x\cdot \nabla u_{i} + \frac{n-2}{2}u_{i}\right)\left((\Delta - \Delta_{g})u_{i} - \sum_{j}A_{ij}(x)u_{j}\right)\right| \leq c|x|^{2-n},
\end{equation}
for some positive constant $c$ which does not depend on $x$. The Pohozaev identity tell us the limit 
\begin{equation*}
P(\mathcal{U}) := \lim_{r\rightarrow 0}P(r,\mathcal{U})
\end{equation*}
exists. The number $P(\mathcal{U})$ is called the \textit{Pohozaev invariant} of the solution $\mathcal{U}$.

Once defined the Pohozaev invariant, our main result of this section reads as follows.
\begin{theorem}\label{lower} Let $\mathcal{U}$ be a positive solution to the system \eqref{S} in $B^{n}_{1}(0)\backslash\{0\}$ such that  $3\leq n\leq 5$ and the potential A satisfies the hypotheses \ref{H1} and \ref{H2}. Then $P(\mathcal{U}) \leq 0$. Moreover, $P(\mathcal{U}) = 0$ if and only if each coordinate $u_{i}$ is smooth on the origin.
\end{theorem}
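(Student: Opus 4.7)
The plan is to perform a blow-up analysis that reduces matters to the Pohozaev invariant of a solution of the limit system \eqref{eq025}, for which Corollary \ref{classical} directly supplies the sign. Given a sequence $r_k\to 0$, I will introduce the rescaling
\[
\mathcal{U}_k(y) := r_k^{(n-2)/2}\mathcal{U}(r_k y), \qquad y\in B_{r_k^{-1}}\setminus\{0\},
\]
which solves a system of the form \eqref{eq000} with metric $g_k(y)=g(r_k y)$ and zero-order term $r_k^2 A(r_k\,\cdot\,)$; both converge, respectively, to $\delta$ and to $0$ locally uniformly on $\mathbb{R}^n\setminus\{0\}$. A direct change of variables in \eqref{P1} yields the scale invariance $P(r_k,\mathcal{U}) = P(1,\mathcal{U}_k)$. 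Using Theorem \ref{upper} and Corollary \ref{DesHarnack} to obtain uniform $C^2$ bounds for $\{\mathcal{U}_k\}$ on compact subsets of $\mathbb{R}^n\setminus\{0\}$, Arzel\`a--Ascoli and elliptic regularity produce a subsequence converging in $C^2_{\mathrm{loc}}(\mathbb{R}^n\setminus\{0\})$ to a nonnegative solution $\mathcal{U}_\infty$ of \eqref{eq025}. Passing to the limit in the boundary integral over $\{|y|=1\}$ then delivers
\[
P(\mathcal{U}) \;=\; \lim_{k\to\infty}P(r_k,\mathcal{U}) \;=\; P(1,\mathcal{U}_\infty) \;=\; P(\mathcal{U}_\infty),
\]
the last equality by the radius-independence \eqref{pohid} of the Pohozaev integral along any solution of the limit system. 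Corollary \ref{classical} then gives $P(\mathcal{U})\leq 0$.

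The direction ``removable implies $P(\mathcal{U}) = 0$'' is immediate: if $\mathcal{U}\in C^2(B_1)$, then the integrand in \eqref{P1} is uniformly bounded while the surface measure on $\partial B_s$ scales as $s^{n-1}$, so $P(s,\mathcal{U})\to 0$ as $s\to 0$. For the converse I will argue by contradiction: assume $P(\mathcal{U})=0$ but that the origin is a nonremovable singularity, and set $\Phi(r):=r^{(n-2)/2}\max_{|x|=r}|\mathcal{U}(x)|$, which is bounded above by Theorem \ref{upper}.

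The substantive step, and the main obstacle, is the regime $\limsup_{r\to 0}\Phi(r)>0$: extracting $r_k\to 0$ with $\Phi(r_k)\geq\varepsilon>0$, Corollary \ref{DesHarnack} forces the blow-up limit $\mathcal{U}_\infty$ to be nontrivial; since $P(\mathcal{U}_\infty)=P(\mathcal{U})=0$, Corollary \ref{classical} requires $\mathcal{U}_\infty$ to be a smooth Aubin--Talenti bubble of the form $u_{a,\lambda}\Lambda$. Following the strategy of \cite{marques}, I will show that such a bubble profile is incompatible with the standing nonremovability hypothesis by means of a refined blow-up argument: the bubble's asymptotics $u_{a,\lambda}(y)\sim|y|^{2-n}$ at infinity, combined with the radial symmetry of singular limits of the flat system (Theorem \ref{RS1}), allow one to extract a secondary Fowler-type limit with strictly negative Pohozaev invariant, contradicting $P(\mathcal{U})=0$. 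In the complementary regime $\Phi(r)\to 0$, equivalent to $|\mathcal{U}(x)|=o(|x|^{(2-n)/2})$, the cooperativity hypothesis \ref{H1} permits a Br\'ezis--Lions--Serrin bootstrap applied to each coordinate of \eqref{S} separately, with the off-diagonal entries of $A$ acting as favourably signed sources: the improved decay places $|\mathcal{U}|^{4/(n-2)}u_i$ in $L^q_{\mathrm{loc}}$ for every $q<\infty$, and standard $W^{2,q}$ together with Schauder estimates then upgrade $\mathcal{U}$ to $C^2$ across the origin, again contradicting nonremovability and completing the equivalence.
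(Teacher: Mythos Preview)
Your blow-up argument for $P(\mathcal{U})\leq 0$ is correct and slightly more direct than the paper's route, which absorbs this conclusion into the overall proof via Lemma~\ref{n0}. The direction ``removable implies $P(\mathcal{U})=0$'' is also fine.

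The substantial gap is in your Case~1 (the regime $\limsup_{r\to 0}\Phi(r)>0$ with $P(\mathcal{U})=0$). Your proposed contradiction via a ``secondary Fowler-type limit with strictly negative Pohozaev invariant'' cannot work as stated: since $P(\mathcal{U})=0$, \emph{every} blow-up limit along \emph{every} sequence $r_k\to 0$ satisfies $P(\mathcal{U}_\infty)=0$ and hence, by Corollary~\ref{classical}, is either zero or a smooth bubble---never a singular Fowler-type solution. So there is no scale at which such a limit appears, and your mechanism produces no contradiction. What actually rules out this scenario (and what the paper, following \cite{marques}, does) is a quantitative neck analysis: one studies the spherical average $w(t)=r^{(n-2)/2}\overline{u}(r)$, locates local minima $t_k$ with $w(t_k)\to 0$, computes $P(r_k,\mathcal{U})$ explicitly at those scales (Lemmas~\ref{n1} and~\ref{n2}), and bounds $w^2(t_k)$ by the error terms $I_1+I_2$ coming from the Pohozaev identity. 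An ODE-type differential inequality for $w$, namely \eqref{24}, then yields sharp two-sided growth estimates across the necks, from which the contradiction is extracted. Hypothesis~\ref{H2} enters precisely in Lemma~\ref{n1} (to force $\langle a,b\rangle\neq 0$, without which the bound $w^2(t_k)\leq c|P(r_k,\mathcal U)|$ fails), and the restriction $3\leq n\leq 5$ is exactly what makes the final exponent comparison collapse; neither ingredient is visible in your sketch.

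Your Case~2 is also incomplete: the hypothesis $|\mathcal{U}(x)|=o(|x|^{(2-n)/2})$ is exactly borderline for $L^{2n/(n-2)}_{\mathrm{loc}}$ and does not by itself start a Br\'ezis--Lions bootstrap. The paper's Lemma~\ref{l2} again uses the ODE inequality \eqref{24} to upgrade the qualitative $o$-bound to a quantitative rate $|\mathcal{U}(x)|\leq c(\delta)|x|^{-\delta}$ for every $\delta>0$, after which the elliptic bootstrap goes through.
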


The strategy to reach the result is to assume that $P(\mathcal{U}) \geq 0$ and proof that the origin is a removable singularity and $P(\mathcal{U}) =0$. In what follows let us denote by $$u(x) = u_{1}(x)+ u_{2}(x),$$ $\overline{u}$ the average of $u$ over $\partial B_{r}$, that is,
$$\overline u(r):=\fint_{\partial B_{r}} u:=\frac{1}{vol(\partial B_r)}\int_{\partial B_{r}} u,$$
and define $$w(t) = \overline{u}(r)r^{\frac{n-2}{2}},$$ where $t = -\log r$.

We have divided the proof into a sequence of lemmas.

\begin{lemma}\label{n0} Let $\mathcal{U}$ be a positive solution of $\eqref{eq000}$ which satisfies $P(\mathcal{U}) \geq 0$. Then
	\begin{equation*}
	\liminf_{x\rightarrow 0} u(x)|x|^{\frac{n-2}{2}} = 0.
	\end{equation*}
\end{lemma}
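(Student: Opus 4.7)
The plan is to argue by contradiction: assume $\liminf_{x\to 0} u(x)|x|^{(n-2)/2} = c_0 > 0$, and derive $P(\mathcal{U}) < 0$, contradicting the hypothesis $P(\mathcal{U}) \geq 0$. The first step is to upgrade the pointwise $\liminf$ to a uniform lower bound. The spherical Harnack inequality of Corollary \ref{DesHarnack}, applied on each sphere $\partial B_r$ with $0 < r < 1/4$, propagates the $\liminf$ assumption to every point of $\partial B_r$ and to each coordinate. Combined with the upper bound of Theorem \ref{upper}, this yields constants $c_1, c_2 > 0$ and $r_0 > 0$ such that
\[
c_1 |x|^{(2-n)/2} \leq |\mathcal{U}(x)| \leq c_2 |x|^{(2-n)/2} \quad \text{for all } 0 < |x| < r_0.
\]

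Next I would perform a blow-up analysis. For a sequence $r_k \downarrow 0$, set $\mathcal{V}_k(y) := r_k^{(n-2)/2} \mathcal{U}(r_k y)$; each $\mathcal{V}_k$ satisfies a rescaled version of \eqref{eq000} with metric $(g_k)_{ij}(y) = g_{ij}(r_k y)$ and potential $r_k^2 A(r_k y)$, which converge respectively to the Euclidean metric and to zero on compact subsets of $\mathbb{R}^n \setminus \{0\}$. Since the $\mathcal{V}_k$ are uniformly bounded above and below on every compact annulus $\{\varepsilon \leq |y| \leq \varepsilon^{-1}\}$, standard elliptic theory produces a $C^2_{loc}$-convergent subsequence whose limit $\mathcal{V}_0$ is a nonnegative singular solution of the limit system \eqref{eq025}. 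By the classification result Theorem \ref{class1}, $\mathcal{V}_0 = u_0\Lambda$ for some Fowler solution $u_0$ and some $\Lambda \in \mathbb{S}^1_+$.

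The final step is to match the Pohozaev invariants. A direct change of variables in \eqref{P1} gives the scaling identity $P(r_k \rho, \mathcal{U}) = P(\rho, \mathcal{V}_k)$ for every $\rho > 0$. Letting $k \to \infty$ with $\rho$ fixed, the left-hand side tends to $P(\mathcal{U})$ by definition, while the right-hand side tends to $P(\rho, \mathcal{V}_0)$ by $C^2$ convergence on $\partial B_\rho$. By \eqref{pohid}, $P(\rho, \mathcal{V}_0)$ is independent of $\rho$ and equals the classical Pohozaev invariant of $u_0$, which is strictly negative by Corollary \ref{classical} because the singularity of $\mathcal{V}_0$ is non-removable (the lower bound persists under scaling). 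Hence $P(\mathcal{U}) < 0$, contradicting $P(\mathcal{U}) \geq 0$. The main obstacle is verifying the scaling identity exactly: although the integrand of \eqref{P1} is purely Euclidean, one must carefully track the scaling weights of each summand together with the Jacobian of the sphere, relying on the fact that the exponent $\tfrac{4}{n-2}$ is critical for exact cancellation.
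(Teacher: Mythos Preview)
Your proposal is correct and follows essentially the same blow-up/Pohozaev-matching argument as the paper (the paper simply takes $\rho=1$ in your scaling identity). One minor remark: invoking the spherical Harnack inequality is unnecessary, since $\liminf_{x\to 0} u(x)|x|^{(n-2)/2}>0$ already yields a uniform lower bound $u(x)|x|^{(n-2)/2}\geq c_0/2$ near the origin, and hence the two-sided bound on $|\mathcal U|$ that you need.
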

\begin{proof}
	If this result is not true, without loss of generality we can suppose that there exist positive constants $c_{1}$ and $c_{2}$ such that
	\begin{equation}\label{io}
	c_{1}|x|^{\frac{2-n}{2}}\leq u_{1}(x) \leq c_{2}|x|^{\frac{2-n}{2}},
	\end{equation}
	where the second inequality above follows from Theorem \ref{upper}. Choose any sequence $r_{k} \rightarrow 0$, and define
	\begin{equation*}
	u_{i,k}(x) = r_{k}^{\frac{n-2}{2}}u_{i}(r_{k}x).
	\end{equation*}
	Then, using \eqref{io}, we have 
	\begin{equation}\label{io1}
	c_{1}|x|^{\frac{2-n}{2}}\leq u_{1,k}(x) \leq  c_{2}|x|^{\frac{2-n}{2}}.
	\end{equation}
	Moreover  $u_{i,k}$ satisfies 
	\begin{equation*}
	-\Delta_{g_{k}}u_{i,k} + r_{k}^{2}\sum_{j=1}^{2}A_{ij}(r_{k}x)u_{j,k} = \frac{n(n-2)}{4}|\mathcal{U}_{k}|^{\frac{4}{n-2}}u_{i,k}\quad \mbox{in}\quad B_{r_{k}^{-1}}(0)\backslash\{0\},
	\end{equation*} 
	where $(g_{k})_{lm}(x) = g_{lm}(r_{k}x)$. Elliptic theory then implies that there exists a subsequence, also denoted by $u_{i,k}$, which converges in compact subsets of $\mathbb{R}^{n}\backslash \{0\}$ to a solution $\mathcal{U}_{0} = (u_{1,0}, u_{2,0})$ of the limit system
	\begin{equation*}
	\Delta u_{i,0} + \frac{n(n-2)}{4}|\mathcal{U}_{0}|^{\frac{4}{n-2}}u_{i,0} = 0.
	\end{equation*}
	By \eqref{io1} we get 
	\[
	u_{1,0}(x) \geq c_{1}|x|^{\frac{2-n}{2}},
	\]
	which implies that $\mathcal{U}_{0}$ is singular at the origin. However, by Theorem \ref{class} and Corollary \ref{classical} we know that $P(\mathcal{U}_{0}) < 0$.  This is a contradiction, because
$$
	P(\mathcal{U}_{0}) = P(1, \mathcal{U}_{0}) = \lim_{k\rightarrow \infty} P(1, \mathcal{U}_{k}) = \lim_{k\rightarrow \infty} P(r_{k},\mathcal{U}) = P(\mathcal{U}) \geq 0.
$$
\end{proof}
\begin{lemma}\label{n1} Assume that $\mathcal{U}$ is a positive solution of $\eqref{eq000}$ and that the potential A satisfies the hypotheses \ref{H1} and \ref{H2}. Suppose that there exists a sequence $(t_k)$ of minimum points for $w$ such that $\displaystyle\lim_{k\rightarrow \infty}w(t_k) = 0$, where $w=w_1+w_2$, $w_i=r^{\frac{n-2}{2}}\overline u_i$ and $r=e^{-t}$. Along $|x| = r_{k}$ there exist positive constants $a_{i}$ and $b_{i}$ such that
\begin{equation}\label{bo}
\begin{array}{rcl}
	u_{i}(x) & = & \overline{u}(r_{k})(c_{i} + o(1)) \\ 
	|\nabla u_{i}(x)| & = & -\overline{u}'(r_{k})(2a_{i}+o(1)).
\end{array}
\end{equation}
	where $c_{i} = a_{i} + b_{i}$ and $t_k = -\ln r_k$. Moreover, the vectors $a$ and $b$ are not orthogonal.
\end{lemma}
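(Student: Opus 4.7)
The strategy is a blowup analysis at the scales $r_k$, identifying the limit profile and using its explicit form to read off the expansions.

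First I introduce the rescalings $\tilde u_{i,k}(x) := u_i(r_k x)/\bar u(r_k)$, normalized so that $\bar{\tilde U}_k(1)=1$ where $\tilde U_k := \tilde u_{1,k}+\tilde u_{2,k}$. A direct computation shows that
\begin{equation*}
-\Delta_{g_k}\tilde u_{i,k} + r_k^2\sum_j A_{ij}(r_k x)\tilde u_{j,k} = \tfrac{n(n-2)}{4}\,w(t_k)^{4/(n-2)}|\tilde{\mathcal U}_k|^{4/(n-2)}\tilde u_{i,k},
\end{equation*}
with $(g_k)_{lm}(x)=g_{lm}(r_k x)$. Since $\bar u$ is bounded away from $0$ on compact subsets of $(0,1]$, the hypothesis $w(t_k)\to 0$ forces $r_k\to 0$, so both the potential term and the nonlinearity tend to zero in the limit.

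Next I establish that $\{\tilde u_{i,k}\}$ is locally uniformly bounded on $\mathbb R^n\setminus\{0\}$. On $|x|=1$ this follows from $\bar{\tilde u}_{i,k}(1)\leq 1$ together with the spherical Harnack inequality of Corollary \ref{DesHarnack}; propagation to any compact subset of $\mathbb R^n\setminus\{0\}$ is obtained by a Harnack chaining argument exploiting the cooperative structure \ref{H1}. Elliptic regularity then yields a $C^2_{loc}$ subsequential limit $\tilde u_{i,0}$, which is nonnegative and harmonic on $\mathbb R^n\setminus\{0\}$. By Bôcher's theorem and Liouville applied to the regular part,
\begin{equation*}
\tilde u_{i,0}(x) = a_i|x|^{2-n}+b_i,\qquad a_i,b_i\geq 0.
\end{equation*}
The normalization gives $c_1+c_2=1$ with $c_i:=a_i+b_i$. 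The stationarity $w'(t_k)=0$ is equivalent to $-\bar u'(r_k)=\tfrac{n-2}{2}\bar u(r_k)/r_k$, and passing to the limit via $\bar u'(r_k)=(\bar u(r_k)/r_k)\bar{\tilde U}_k'(1)\to(\bar u(r_k)/r_k)(2-n)(a_1+a_2)$ forces $a_1+a_2=1/2$, hence $b_1+b_2=1/2$.

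Both expansions now drop out of the $C^2_{loc}$ convergence. Evaluating on $|x|=1$ gives $u_i(r_k\theta)=\bar u(r_k)(c_i+o(1))$ uniformly in $\theta$; the radial symmetry of $\tilde u_{i,0}$ kills the tangential gradient at leading order, the radial derivative equals $(n-2)a_i$ in absolute value, and inserting the minimum identity $-\bar u'(r_k)=\tfrac{n-2}{2}\bar u(r_k)/r_k$ yields $|\nabla u_i(r_k\theta)|=-\bar u'(r_k)(2a_i+o(1))$.

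The hard part is the non-orthogonality claim. Since $a_i,b_i\geq 0$ with $a_1+a_2=b_1+b_2=1/2$, the condition $a\cdot b=0$ forces, up to relabeling, $a_1=b_2=1/2$ and $a_2=b_1=0$, so that one component of the limit is purely constant and the other purely singular. I would rule this out by a refined Pohozaev analysis: inserting the expansions into $P(r_k,\mathcal U)$ gives the leading contribution
\begin{equation*}
P(r_k,\mathcal U) \sim -\sigma_{n-1}\tfrac{(n-2)^2}{2}\,w(t_k)^2\,(a\cdot b),
\end{equation*}
so in the degenerate case this leading term vanishes. One then exploits the Pohozaev identity of Lemma \ref{lemPI} together with hypotheses \ref{H1}--\ref{H2} (the latter being essential to control the metric error $(\Delta_g-\Delta)u_i-\sum_j A_{ij}u_j$ in dimension $n=5$) to match subleading terms and derive a contradiction with the positivity and singular behavior of $\mathcal U$, invoking Lemma \ref{L1} to preclude the degenerate splitting. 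This Pohozaev matching, rather than the blowup step itself, is the crux of the argument.
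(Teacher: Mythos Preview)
Your blowup analysis for the expansions \eqref{bo} is essentially the paper's argument, up to the cosmetic choice of normalization (you divide by $\bar u(r_k)$, the paper by $v_k(p)$ for a fixed $p\in\mathbb S^{n-1}$; these are comparable by the spherical Harnack inequality). The identification of the harmonic limit $a_i|x|^{2-n}+b_i$ and the computation $a_1+a_2=b_1+b_2=1/2$ are correct.

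The genuine gap is the non-orthogonality step. Your proposed ``Pohozaev subleading matching'' is not a proof: you correctly observe that the leading term of $P(r_k,\mathcal U)$ is proportional to $a\cdot b$, but you give no mechanism for extracting a contradiction from the subleading terms, and the appeal to Lemma~\ref{L1} is misplaced --- that lemma concerns solutions of the limit system \eqref{eq025} on $\mathbb R^n\setminus\{0\}$, not harmonic blowup profiles or solutions of \eqref{eq000}. Your remark that \ref{H2} controls the metric error $(\Delta_g-\Delta)u_i$ is also off; \ref{H2} is a hypothesis on the potential $A$, not on the metric.

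The paper's route is entirely different and proves something stronger. One integrates the identity $v_{i,k}\Delta_{g_k}v_{j,k}-v_{j,k}\Delta_{g_k}v_{i,k} = r_k^2\sum_l(v_{j,k}\tilde A_{il}-v_{i,k}\tilde A_{jl})v_{l,k}$ over $B_1\setminus B_\varepsilon$ and sends $\varepsilon\to 0$; the inner boundary term vanishes because any subsequential limit of the rescalings at scale $\varepsilon$ is a Fowler-type solution $u_0\Lambda$ (Theorem~\ref{class} and \cite{DHV}), for which $u_{0,i}\partial_r u_{0,j}-u_{0,j}\partial_r u_{0,i}=0$. Dividing by $v_k(p)^2$ produces a factor $r_k^{4-n}u(r_kp)^{-2}$ in front of the potential integral; for $n=5$ this is where \ref{H2} is actually used, since $A=fId_2+O(|x|)$ forces the integrand $v_{j,k}\tilde A_{il}-v_{i,k}\tilde A_{jl}$ to carry an extra $O(r_k|y|)$, restoring the needed decay. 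Passing to the limit gives
\[
\int_{\partial B_1}(h_j\partial_r h_i - h_i\partial_r h_j)\,d\sigma=0,
\]
which for $h_i=a_i|y|^{2-n}+b_i$ reads $a_1b_2=a_2b_1$. Combined with $a_1+a_2=b_1+b_2=1/2$ this forces $a=b$, hence $a\cdot b=|a|^2>0$. This Green's-identity argument is the crux you are missing.
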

\begin{proof}  Define $r_{k} = e^{-t_{k}}$ and $v_{i,k}(y) = r_{k}^{\frac{n-2}{2}}u_{i}(r_{k}y).$
Since $\overline{v}_{i,k}(1) = w_{1}(t_{k}) \leq w_{1}(t_{k}) + w_{2}(t_{k}) = w(t_{k}) \rightarrow 0$ we get from Harnack inequality that each coordinate $v_{i,k}$ converge uniformly in compact subsets of $\mathbb{R}^{n}\backslash\{0\}$ to $0$. So, if we define
	\begin{equation*}
	h_{i,k}(y) = v_{k}(p)^{-1}v_{i,k}(y),
	\end{equation*} 
	where $v_{k} = \sum_{i}v_{i,k}$ and $p = (1,0,\ldots,0)$, we have that
	\begin{eqnarray*}
		-\Delta_{g_{k}}h_{i,k}(y) &=& -v_{k}(p)^{-1}\left(\Delta_{g_{k}}v_{i,k}\right)\\
		&=& v_{k}(p)^{-1}\left(\frac{n(n-2)}{4}|\mathcal{V}_{k}|^{\frac{4}{n-2}}v_{i,k} - r_{k}^{2}\sum_{j=1}^{2}A_{ij}(r_{k}y)v_{j,k}\right).
	\end{eqnarray*}
	This implies that
	\begin{equation*}
	-\Delta_{g_{k}}h_{i,k}  + r_{k}^{2}\sum_{j=1}^{2}A_{ij}(r_{k}y)h_{j,k} = \frac{n(n-2)}{4}v_{k}(p)^{\frac{4}{n-2}}|H_{k}|^{\frac{4}{n-2}}h_{i,k},
	\end{equation*}
	where $(g_{i})_{lm}(y) = g_{lm}(r_{i}y)$ and $H_k=(h_{1,k},h_{2,k})$.
	By elliptic estimates we know that there exists a subsequence $h_{i,k}$ which converge in $C^{2}_{loc}$ to a nonnegative harmonic function $h_{i}$ in $\mathbb{R}^{n}\backslash\{0\}$. Then $$h_{i}(y) = a_{i}|y|^{2-n} + b_{i},$$ and $a_{1} + a_{2} = b_{1} + b_{2} = 1/2$, since $h_{1}(p) + h_{2}(p) = 1$ and $\partial_{r}((h_{1} + h_{2})(r)r^{\frac{n-2}{2}}) = 0$ at $r =1$.  Note that if $|y|=1$, then $h_i(y)=a_i+b_i$. This implies that $h_{i,k}(y)=c_i+o(1)$. From this we get the first equality in \eqref{bo}. Analogously, we obtain the second equality.

Since
$$-\Delta_{g_k}v_{i,k}+r_k^2\sum_j\tilde A_{ij}v_{jk}=\frac{n(n-2)}{4}|V_k|^{\frac{4}{n-2}}v_{ik},$$
where $\tilde A_{ij}(y)=A_{ij}(r_ky)$, then we get that
	\begin{equation*}
	\int_{B_1\backslash B_{\varepsilon}} (v_{i,k}\Delta_{g}v_{j,k} - v_{j,k}\Delta_{g}v_{i,k})dv_{g} = r_k^2\sum_{l=1}^{2}\int_{B_1\backslash B_{\varepsilon}} (v_{j,k}\tilde A_{il} - v_{i,k}\tilde A_{jl})v_{l,k}dv_{g}.
	\end{equation*}
Integrating by parts, we obtain that
	\begin{equation}\label{ast}
	\begin{array}{rcl}
		\displaystyle\int_{\partial B_{1}}(v_{i,k}\partial_{r}v_{j,k} - v_{j,k}\partial_{r}v_{i,k})d\sigma_{g} & = & r_k^2\displaystyle\sum_{l=1}^{2}\int_{B_1\backslash B_{\varepsilon }} (v_{j,k}\tilde A_{il} - v_{i,k}\tilde A_{jl})v_{l,k}dv_{g}\\
		& &  + \displaystyle\int_{\partial B_{\varepsilon}}(v_{i,k}\partial_{r}v_{j,k} - v_{j,k}\partial_{r}v_{i,k})d\sigma_{g}.
	\end{array}
	\end{equation}
	
	In order to analyse the last integral on the right-hand side of \eqref{ast}, define 
$
	\varphi_{i,k}^{\varepsilon}(z) = \varepsilon^{\frac{n-2}{2}}v_{i,k}(\varepsilon z).
$
	Then
	\begin{equation*}
	\int_{\partial B_{\varepsilon}}(v_{i,k}\partial_{r}u_{j,k} - u_{j,k}\partial_{r}u_{i,k})d\sigma_{g} = \int_{\partial B_{1}} (\varphi_{i,k}^{\varepsilon}\partial_{r}\varphi_{j,k}^{\varepsilon} - \varphi_{j,k}^{\varepsilon}\partial_{r}\varphi^{\varepsilon}_{j,k})d\sigma_{g}.
	\end{equation*}
	On the other hand
	\begin{equation*}
	-\Delta_{g_{\varepsilon k}}\varphi_{i,k}^{\varepsilon} + (\varepsilon r_{k})^{2}\sum_{j=1}^{2}\tilde{A}^{\varepsilon}_{ij}\varphi_{j,k}^{\varepsilon} = c(n)|\varphi_{k}^{\varepsilon}|^{\frac{4}{n-2}}\varphi_{i,k}^{\varepsilon}
	\end{equation*}
	in $B_{(\varepsilon r_{k})^{-1}}(0)\backslash \{0\}$ and by Theorem \ref{upper}
	\begin{equation*}
	|\varphi^{\varepsilon}_{k}(z)| \leq C|z|^{\frac{2-n}{2}}.
	\end{equation*}
	Similarly to what we did at the beginning of this proof, after passing to a subsequence, $\varphi_{i,k}^{\varepsilon}$ converges in $C^{2}$ topollogy locally in compact subsets of $\mathbb{R}^{n} \backslash \{0\}$, to a positive solution of
	\begin{equation*}
	\Delta u_i + \frac{n(n-2)}{4}|\mathcal{U}|^{\frac{4}{n-2}}u_i = 0
	\end{equation*}
	which using Theorem \ref{class} as well Proposition 1.1 in \cite{DHV}, is of the form $u_{0}\Lambda$, where $\Lambda =(\Lambda_1, \Lambda_2)$ is a vector in the unit sphere with nonnegative coordinates. Consequently, when $\varepsilon$ goes to zero we have
	\begin{equation*}
	\lim_{\varepsilon \to 0}\int_{\partial B_{1}} (\varphi_{j,k}^{\varepsilon}\partial_{r}\varphi_{i,k}^{\varepsilon} - \varphi_{i,k}^{\varepsilon}\partial_{r}\varphi^{\varepsilon}_{i,k})d\sigma_{g} = 	\int_{\partial B_{1}} (u_{0}\partial _{r}u_{0}\Lambda_{i}\Lambda_{j} - u_{0}\partial _{r}u_{0}\Lambda_{j}\Lambda_{i})d\sigma_{g} = 0.
	\end{equation*}
Thus by \eqref{ast}, we conclude that 
	\begin{equation}
			\displaystyle\int_{\partial B_{1}}(v_{i,k}\partial_{r}v_{j,k} - v_{j,k}\partial_{r}v_{i,k})d\sigma_{g} = r_k^2\displaystyle\sum_{l=1}^{2}\int_{B_1\backslash B_{\varepsilon }} (v_{j,k}\tilde A_{il} - v_{i,k}\tilde A_{jl})v_{l,k}dv_{g}.
	\end{equation}
Since $|v_{i,k}|\leq c|x|^{\frac{2-n}{2}}$, then the integral in the right hand side above is convergent and it is uniformly bounded in $k$.

This implies that
	\begin{align*}
	&\int_{\partial B_{1}}(h_{j,k}\partial_{r}h_{i,k} -  h_{i,k}\partial_{r}h_{j,k})d\sigma_{g} \\
	&\qquad\qquad \qquad = r_{k}^{4-n}u(r_kp)^{-2} \sum_{l=1}^{2}\int_{B_{1}\backslash\{0\}} (v_{j,k}\tilde A_{il}-v_{i,k}\tilde A_{jl})v_{l,k}dv_{g}.
	\end{align*}
Since $\displaystyle\lim_{k\rightarrow\infty}u(r_kp)=\infty$ and the integral in hand right side is uniformly bounded and $A$ satisfies the hypotheses \ref{H2}, we conclude that 
	\begin{equation*}
	\int_{\partial B_{1}}(h_{j}\partial_{r}h_{i} - h_{i}\partial_{r}h_{j})d\sigma_{g} = 0,
	\end{equation*}
	and consequently
	\begin{equation*}
	a_1b_2 = a_2b_1
	\end{equation*}
	which finishes the proof.\end{proof}

\begin{lemma}\label{n2} 	Assume that $\mathcal{U}$ is a positive solution of $\eqref{eq000}$ and along $|x| = r_k$ it satisfies $\eqref{bo}$. Then 
\[	
	P(r_{k},\mathcal{U}) = \sigma_{n-1}\left( - 4\langle a,b\rangle\frac{\delta^2}{2} w^{2}(t_{k}) + \frac{\delta^2}{2}(c_{1}^2 + c_{2}^2)^{\frac{n}{n-2}}|W|^{\frac{2n}{n-2}}(t_{k})\right)(1 + o(1)),
\]
		where $w$ is defined in Lemma \ref{n1}.
\end{lemma}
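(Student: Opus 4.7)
The plan is to substitute the asymptotic expansions provided by Lemma \ref{n1} into each term of the Pohozaev integrand \eqref{P1}, then use that $t_k$ is a critical point of $w$ to eliminate $\bar u'(r_k)$ in favour of $w(t_k)$, and finally observe that the leading-order powers of $r_k$ cancel (reflecting the scale-invariance of the Pohozaev integral).

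First I would record the bookkeeping consequences of the definition $w(t) = \bar u(r) r^{\delta}$ with $r = e^{-t}$: namely $\bar u(r_k) = r_k^{-\delta} w(t_k)$, and since $w'(t_k)=0$ at a minimum, $\bar u'(r_k) = -\delta\, r_k^{-1-\delta} w(t_k)$. Next, I would promote the pointwise identities \eqref{bo} to uniform statements on $\partial B_{r_k}$. Corollary \ref{DesHarnack} provides the spherical Harnack inequality together with $C^2$ estimates, and the limit $h_i(y) = a_i|y|^{2-n}+b_i$ in Lemma \ref{n1} is purely radial, so the tangential gradient of $u_i$ is subdominant. Consequently, on $|x|=r_k$,
\[
u_i = c_i\,\bar u(r_k)(1+o(1)),\quad \partial_r u_i = 2a_i\,\bar u'(r_k)(1+o(1)),\quad |\nabla u_i|^2 = (\partial_r u_i)^2(1+o(1)).
\]

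With these in hand the four pieces of the Pohozaev integrand expand as follows. The term $\tfrac{n-2}{2}\langle\mathcal U,\partial_\nu \mathcal U\rangle$ produces a factor $\sum_i c_i a_i = |a|^2 + \langle a,b\rangle$; the combination $-\tfrac{r}{2}|\nabla \mathcal U|^2 + r|\partial_\nu \mathcal U|^2$ collapses to $\tfrac{r_k}{2}\cdot 4|a|^2(\bar u')^2$ because the two radial contributions differ only by a factor of $1/2$; and $|\mathcal U|^{2n/(n-2)}$ produces the factor $(c_1^2+c_2^2)^{n/(n-2)}\bar u^{2n/(n-2)}$. Integrating over $\partial B_{r_k}$ contributes $\sigma_{n-1}r_k^{n-1}$, and after inserting the expressions for $\bar u(r_k)$ and $\bar u'(r_k)$ the powers of $r_k$ cancel completely. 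Using $(n-2)\delta = 2\delta^2$ and $(n-2)^2/8 = \delta^2/2$, the result becomes
\[
\sigma_{n-1}\!\left[-2\delta^2(|a|^2+\langle a,b\rangle)w^2 + 2\delta^2|a|^2 w^2 + \tfrac{\delta^2}{2}(c_1^2+c_2^2)^{n/(n-2)}w^{2n/(n-2)}\right](1+o(1)),
\]
and the two $|a|^2$ contributions cancel to give the announced formula.

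The step I expect to be the main technical obstacle is the upgrade from the single-point estimates of Lemma \ref{n1} to the uniform spherical estimates used here, in particular the fact that the tangential piece of $|\nabla \mathcal U|^2$ is truly negligible. The Harnack inequality and standard elliptic regularity handle the zeroth- and first-order terms, while the radial character of the harmonic limit $h_i$ controls the tangential derivative; once this is established, the remaining computation is an exercise in algebraic cancellation, and the cancellation of the $|a|^2$ terms is the conceptual content that makes the invariant detect only the non-orthogonality $\langle a,b\rangle$.
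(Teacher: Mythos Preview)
Your approach is correct and mirrors the paper's proof: both substitute the asymptotics \eqref{bo} into the Pohozaev integrand, use $w'(t_k)=0$ to replace $\bar u'(r_k)$ by $-\delta r_k^{-1-\delta}w(t_k)$, and compute term by term. Your explicit tracking of the $|a|^2$ cancellation is a bit more detailed than the paper's write-up, and your worry about ``upgrading'' \eqref{bo} to uniform spherical estimates is unnecessary since the $C^2_{loc}$ convergence $h_{i,k}\to h_i$ in Lemma~\ref{n1} already delivers uniformity on $\{|y|=1\}$ (in particular the vanishing of the tangential gradient).
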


\begin{proof}
	Since the solution $\mathcal{U}$ satisfies $\eqref{bo}$, note that 
	\begin{equation*}
	\left\langle \mathcal{U}, \frac{\partial \mathcal{U}}{\partial \nu} \right\rangle = \sum_{i=1}^{2} \bar{u}(r_k)\bar{u}'(r_k)(2c_i a_i + o(1)).
	\end{equation*}
	On the other hand, using that $w_{t}(t_k) = 0$ it holds the following equality
	\begin{equation}\label{m1}
	\bar{u}'(r_k)r_{k}^{\frac{n}{2}} = -\frac{n-2}{2}w(t_k).
	\end{equation}
	Multiplying $\eqref{m1}$ by $\bar{u}(r_k)$, we conclude that
	\begin{equation*}
	\frac{n-2}{2}\left\langle \mathcal{U}, \frac{\partial \mathcal{U}}{\partial \nu} \right\rangle = -\sum_{i=1}^{2}  r_k^{1-n}\frac{(n-2)^2}{2}w^{2}(t_k)(c_i a_i + o(1)).
	\end{equation*}
	Similarly, we also have
	\begin{align*}
	-\frac{r}{2}|\nabla \mathcal{U}|^2 + r\left|\frac{\partial \mathcal{U}}{\partial\nu}\right|^{2} &= r_k\sum_{i=1}^{2}\bar{u}'(r_k)^2(2a_i^2 + o(1))\\
	&=  r_{k}^{1-n}\sum_i\frac{(n-2)^2}{2}w^{2}(t_{k})(a_i^2 + o(1)).
	\end{align*}
	With these two equalities we get
	\begin{equation*}
	\begin{array}{l}
	\displaystyle P(r_k,\mathcal{U})  = \\
 = \displaystyle \int_{\partial B_{r_{k}}}\left(\frac{n-2}{2}\left\langle \mathcal{U}, \frac{\partial \mathcal{U}}{\partial \nu} \right\rangle - \frac{r}{2}|\nabla \mathcal{U}|^{2}+ r\left|\frac{\partial\mathcal{U}}{\partial\nu}\right|^{2} \right.\displaystyle\left.+  r\frac{(n-2)^2}{8}|\mathcal{U}|^{\frac{2n}{n-2}}\right)d\sigma\\
	 =  \displaystyle\sigma_{n-1}\left( - 4\langle a, b\rangle\frac{\delta^2}{2}w(t_k)^2 + \frac{\delta^2}{2}(c_1^2 + c_2^2)^{\frac{n}{n-2}}|W|^{\frac{2n}{n-2}}(t_k)\right)(1+o(1)),
	\end{array}
	\end{equation*}
	which finishes the proof.
\end{proof}

\begin{lemma}\label{l2} Let $\mathcal{U}$ be a positive solution of $\eqref{eq000}$ defined in the punctured ball. If
	\begin{equation}\label{hi}
	\lim_{|x|\rightarrow 0}|x|^{\frac{n-2}{2}}u(x) = 0,
	\end{equation}
	then $\mathcal{U}$ extends as a smooth solution to all unit ball $B^n$.
\end{lemma}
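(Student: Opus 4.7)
The plan is a removable singularity argument combining the spherical Harnack estimate of Corollary \ref{DesHarnack}, a Brezis--Lions type extension across the origin, and an elliptic bootstrap. First I would upgrade the scalar hypothesis \eqref{hi} to componentwise smallness: by Corollary \ref{DesHarnack} one has $\max_{|x|=r}u_i \leq c_1 \min_{|x|=r}u_i$, and combining this with \eqref{hi} produces a function $\varepsilon(r)\to 0$ as $r\to 0$ with
\begin{equation*}
u_i(x)\leq \varepsilon(|x|)\,|x|^{-(n-2)/2}\quad (i=1,2).
\end{equation*}
Consequently $|\mathcal{U}|^{4/(n-2)}(x) \leq C\,\varepsilon(|x|)^{4/(n-2)}\,|x|^{-2}$, so the critical coefficient is $o(|x|^{-2})$ at the origin; an elementary integration (using $n\geq 3$) then shows that $u_i$ and the nonlinearity $|\mathcal{U}|^{(n+2)/(n-2)}u_i$ both lie in $L^1(B_1)$.

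Second, I would invoke the Brezis--Lions extension principle: a nonnegative $L^1_{\mathrm{loc}}$ function on $B_1\setminus\{0\}$ satisfying a linear elliptic equation whose right-hand side is $L^1_{\mathrm{loc}}$ extends across the origin as a distributional solution on all of $B_1$ modulo a Dirac mass. Applied to each $u_i$, this produces constants $c_i\geq 0$ with
\begin{equation*}
-\Delta_g u_i + \sum_{j=1}^{2}A_{ij}u_j = \tfrac{n(n-2)}{4}|\mathcal{U}|^{4/(n-2)}u_i + c_i\,\delta_0 \quad\text{on }B_1.
\end{equation*}
If some $c_i>0$, classical potential theory would give a Green's function lower bound $u_i(x)\geq c\,d_g(x,0)^{2-n}$ near the origin; since $n\geq 3$ forces $2-n<-(n-2)/2$, this flatly contradicts the componentwise decay just derived. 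Hence $c_1=c_2=0$ and $\mathcal{U}$ is a genuine distributional solution of \eqref{eq000} on all of $B_1$.

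Finally, I would upgrade distributional to smooth by a bootstrap. Because $|\mathcal{U}|^{4/(n-2)}$ is dominated by $\varepsilon(|x|)|x|^{-2}$ with $\varepsilon(r)\to 0$, for any $\eta>0$ the bound $|\mathcal{U}|^{4/(n-2)}\leq \eta\,|x|^{-2}$ holds on a sufficiently small ball. Rescaling $u_{i,\lambda}(y)=\lambda^{(n-2)/2}u_i(\lambda y)$ on the dyadic annuli $\{1/2\leq |y|\leq 2\}$ turns the nonlinear coefficient into a function whose $L^\infty$ norm is bounded by $\varepsilon(\lambda)^{4/(n-2)}\to 0$, so the rescaled linear equation has uniformly small coefficients; standard Moser/De Giorgi iteration and Schauder theory then give uniform $C^{2,\alpha}$ bounds on each annulus, hence $u_i\in L^\infty_{\mathrm{loc}}(B_1)$. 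Once $u_i$ is bounded the full right-hand side of \eqref{eq000} is H\"older continuous and elliptic regularity yields $u_i\in C^\infty(B_1)$. The principal difficulty is precisely this last step: the Hardy-type nonlinearity sits at the borderline of the standard $L^{n/2}$ regularity theory, and it is exactly the quantitative vanishing $\varepsilon(r)\to 0$ (stronger than mere integrability) that keeps the iteration strictly below the critical threshold and closes the argument.
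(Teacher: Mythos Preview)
Your Steps 1--4 (Harnack $\Rightarrow$ componentwise $o(|x|^{-(n-2)/2})$, $L^1$ integrability, Brezis--Lions, elimination of the Dirac mass) are sound and constitute a reasonable alternative opening. The gap is in Step 5. Your rescaling argument does \emph{not} yield $u_i\in L^\infty_{\mathrm{loc}}(B_1)$: uniform $C^{2,\alpha}$ bounds for $u_{i,\lambda}(y)=\lambda^{(n-2)/2}u_i(\lambda y)$ on the annulus $\{1/2\le|y|\le 2\}$ say precisely that $|u_i(x)|\le C|x|^{-(n-2)/2}$, which is the upper bound you already had from Theorem \ref{upper}. The smallness of the rescaled coefficient on each annulus is tautological (it is just the hypothesis $\varepsilon(r)\to 0$ restated) and does not by itself let you cross the critical threshold. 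To actually get boundedness from the distributional equation one needs a genuinely different input: either a Moser iteration on the \emph{full} ball that absorbs the Hardy-type potential $\eta|x|^{-2}$ via the Hardy inequality (which requires first knowing $u_i\in H^1_{\mathrm{loc}}$, and this is not a priori clear since $|\nabla u_i|^2\lesssim \varepsilon(|x|)^2|x|^{-n}$ need not be integrable), or an independent argument that improves the decay to $u(x)\le C_\delta|x|^{-\delta}$ for every $\delta>0$.

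The paper takes exactly the latter route, but by a direct ODE comparison rather than via Brezis--Lions. In cylindrical variables $t=-\log r$, the spherical average $w(t)=r^{(n-2)/2}\overline u(r)$ is shown (using Corollary \ref{DesHarnack}) to satisfy the two-sided differential inequality
\[
-c_1 w^{\frac{n+2}{n-2}}-c_3 e^{-2t}w\;\le\; w_{tt}-\Bigl(\tfrac{n-2}{2}\Bigr)^{2}w\;\le\;-c_2 w^{\frac{n+2}{n-2}}+c_3 e^{-2t}w.
\]
From $w(t)\to 0$ one deduces eventual convexity, hence monotonicity $w_t<0$, and then for each small $\delta>0$ the function $w_t^2-\bigl(\tfrac{n-2}{2}-\delta\bigr)^2w^2$ is nonincreasing and tends to $0$, giving $w(t)\le Ce^{-\left(\frac{n-2}{2}-\delta\right)t}$, i.e.\ $u(x)\le C(\delta)|x|^{-\delta}$. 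This places $u$ in every $L^p$, after which standard elliptic theory removes the singularity. The ODE argument thus replaces precisely the step where your rescaling fails; your Brezis--Lions detour becomes unnecessary once the improved decay $|x|^{-\delta}$ is in hand.
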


\begin{proof}
	We begin by obtaining upper and lower bounds for the second derivatives of $w$ in terms of $w$, where $w$ is defined in Lemma \ref{n1}. Indeed, observe that the upper bound in Theorem \ref{upper} implies that $w(t)$ is bounded. Then,
	\begin{equation*}
	\overline{u}_{r} = \fint_{\partial B_{r}} u_{r},
	\end{equation*}
	and since
	\begin{equation*}
	w_t = - \overline{u}_{r}r^{\frac{n}{2}} - \frac{n-2}{2}w,
	\end{equation*}
	we also get that $|w_t|$ is bounded. Derivating again the function $w$ we obtain 
	\begin{equation}\label{w}
	w_{tt} = \overline{u}_{rr}r^{\frac{n+2}{2}} + (n-1)\overline{u}_{r}r^{\frac{n}{2}} + \left(\frac{n-2}{2}\right)^{2}w.
	\end{equation}	
	Choosing a fixed $s < r$, by the Divergence Theorem, we get
	$$\begin{array}{rcl}
	  \displaystyle\left(\int_{B_{r}\backslash B_{s}}\Delta u\right)_r & = &\displaystyle\left(\int_{\partial B_r}u_r\right)_r=\left(\int_{\partial B_1}u_r(r\cdot)r^{n-1}\right)_r\\
& = &\displaystyle\frac{n-1}{r}\int_{\partial B_1}u_r(r\cdot)r^{n-1}+\int_{\partial B_1}u_{rr}(r\cdot)r^{n-1} \\
& = &\displaystyle\frac{n-1}{r}\int_{\partial B_r}u_r+r^{n-1}\sigma_{n-1}\left(\fint_{\partial B_r}u_{r}\right)_r,	  
	  \end{array}
$$
	where $\sigma_{n-1}=vol(\mathbb S^{n-1})$. Thus	
	\begin{equation*}
	\begin{aligned}
	&\overline{u}_{rr} = \left(\fint_{\partial B_{r}} u_{r}\right)_{r} = (1-n)r^{-1}\fint_{\partial B_{r}}u_{r} + \sigma_{n-1}^{-1}r^{1-n}\left(\int_{B_{r}\backslash B_{s}}\Delta u\right)_{r}\\
	&= \frac{1-n}{r}\overline{u}_{r}+ \frac{r^{1-n}}{\sigma_{n-1}}\sum_{i=1}^{2}\left(\int_{B_{r}\backslash B_{s}}\left((\Delta - \Delta_{g})u_{i} + \sum_{j=1}^{2}A_{ij}u_{j} - c(n)|\mathcal{U}|^{\frac{4}{n-2}}u_{i}\right)\right)_{r}
	\end{aligned}
	\end{equation*}
	and so
	$$\overline{u}_{rr}+ \frac{n-1}{r}\overline{u}_{r} = \fint_{\partial B_{r}}\left((\Delta -\Delta_{g})u + 	\sum_{i,j=1}^{2}A_{ij}(x)u_{j} - c(n)|\mathcal{U}|^{\frac{4}{n-2}}u\right),$$
	where $c(n)=\frac{n(n-2)}{4}$. Replacing in \eqref{w} we have that 
	\begin{equation*}
	\begin{aligned}
	&w_{tt} - \left(\frac{n-2}{2}\right)^{2}w = r^{\frac{n+2}{2}}\fint_{\partial B_{r}}\left((\Delta - \Delta_{g})u + \sum_{i,j=1}^{2}A_{ij}(x)u_{j} -  c(n)|\mathcal{U}|^{\frac{4}{n-2}}u\right).
	\end{aligned}
	\end{equation*}
	
	Applying the spherical Harnack inequality obtained in Corollary \ref{DesHarnack} for each coordinate function we have that 
	\begin{equation*}
	C^{-1}\overline{u}^{\frac{n+2}{n-2}} \leq c(n) \fint_{\partial B_{r}} |\mathcal{U}|^{\frac{4}{n-2}}u \leq C\overline{u}^{\frac{n+2}{n-2}}
	\end{equation*}
	and
	\begin{equation*}
	\left|\fint_{\partial B_{r}}\left((\Delta - \Delta_{g})u + \sum_{i,j=1}^{2}A_{ij}(x)u_{j}\right)\right| \leq c\overline{u}.
	\end{equation*}
	With these estimates we obtain the following inequality
	\begin{equation}\label{24}
	-c_{1}w^{\frac{n+2}{n-2}} - c_{3}e^{-2t}w \leq w_{tt} - \left(\frac{n-2}{2}\right)^{2}w \leq -c_{2}w^{\frac{n+2}{n-2}} + c_{3}e^{-2t}w.
	\end{equation}
	By hypotesis $\eqref{hi}$ we know that $\lim_{t\rightarrow \infty}w(t) =0$. The strategy is to show that $u \in L^{p}_{loc}(B^{n}_{1}(0))$ for some $p>2n/(n-2)$ and then by elliptic theory the function $u$ extends smoothly across the origin. Consequently, each coordinate function $u_{i}$ is smooth.
	
	Note that the first inequality in $\eqref{24}$ implies that there exists $\varepsilon_{0} > 0$, such that if $w(t) \leq \varepsilon_{0}$ and $t$ is sufficiently large, then $w_{tt}(t) > 0$. Since $\displaystyle\lim_{t\rightarrow \infty}w(t) =0$, there exists $T_1$ so that $w(t)<\varepsilon_0$ and $w_{tt} > 0$ for $t \geq T_{1}$. This implies that $w_t < 0$ for $t \geq T_{1}$.
	
	By the first inequality in $\eqref{24}$, given any positive number $0<\delta<n-2$, there exists $T_{0}$ sufficiently large such that
	\begin{equation*}
	w_{tt} - \left(\frac{n-2}{2}- \delta\right)^{2}w \geq (\delta(n-2)-\delta^2-c_1w^{\frac{4}{n-2}}-c_3e^{-2t})w \geq 0,
	\end{equation*}
	for $t \geq T_{0}$, which implies that
	\begin{equation*}
	\left(w_{t}^{2} - \left(\frac{n-2}{2} - \delta\right)^{2}w^2\right)_{t} =  2w_{t}\left( w_{tt} - \left(\frac{n-2}{2} - \delta\right)^{2}w\right)\leq 0,
	\end{equation*}
	for $t>T_2=\max\{T_0,T_1\}$, and using that $\displaystyle\lim_{t \rightarrow \infty}w_{t}(t) = 0$, we obtain
	\begin{equation*}
	w_{t}^{2} - \left(\frac{n-2}{2} - \delta\right)^{2}w^2 \geq 0.
	\end{equation*}
	By integrating we get, for $t \geq T_{2}$, that 
	\begin{equation*}
	w(t) \leq w(T_{0})e^{-\left(\frac{n-2}{2} - \delta\right)(t-T_{0})}.
	\end{equation*}
	Equivalently, there exists $r_{0}(\delta) > 0$, so that
	\begin{equation*}
	u(x) \leq c(\delta)|x|^{-\delta} \mbox{ for all } x \in B_{r_0}(0).
	\end{equation*}
	
	Since $\delta > 0$ is arbitrarily small, the estimate above implies that $u \in L^{p}_{loc}(B_{1}(0))$ for arbitrarily large $p$, which finishes our proof.
\end{proof}

Now we are ready to prove the main result of this section.

\begin{proof}[Proof of Theorem \ref{lower}] Following the aforementioned strategy let us suppose that $P(\mathcal{U}) \geq 0$. By Lemma \ref{l2} the proof is completed by showing that $\displaystyle\lim_{x\rightarrow 0}u(x)|x|^{\frac{n-2}{2}} = 0$.  
	Suppose by contradiction that this is false. 
	
	Since the Pohozaev invariant is nonnegative, it holds as a consequence of Lemma \ref{n0} that $\displaystyle\liminf_{x\rightarrow 0}u(x)|x|^{\frac{n-2}{2}} = 0$. Then we will assume that $\displaystyle\limsup_{x\rightarrow 0}u(x)|x|^{\frac{n-2}{2}} > 0$. Hence we can choose $\varepsilon_{0} > 0$ sufficiently small so that we are able to construct sequences $\bar{t}_{k} \leq t_{k} \leq t_{k}^{*}$ satisfying $\displaystyle\lim_{k\rightarrow \infty} \bar{t}_{k} = +\infty$, $w(\bar{t}_{k}) = w(t_{k}^{*}) = \varepsilon_{0}$, $w_{t}(t_{k}) = 0$ and $\displaystyle\lim_{k \rightarrow \infty}w(t_{k}) = 0$, where $w$ is defined in Lemma \ref{n1}.
	
	Using Lemma~\ref{n2}, we have that 
\[
	P(r_{k},\mathcal{U}) = \sigma_{n-1}\left( - 4\langle a,b\rangle\frac{\delta^2}{2} w^{2}(t_{k}) + \frac{\delta^2}{2}(c_{1}^2 + c_{2}^2)^{\frac{n}{n-2}}|W|^{\frac{2n}{n-2}}(t_{k})\right)(1 + o(1)),
\]
	and consequently
	\begin{equation*}
	P(\mathcal{U}) = \lim_{k\rightarrow \infty} P(r_{k},\mathcal{U}) = 0.
	\end{equation*}
	Morevover, by Lemma \ref{n1} the vectors $a$ and $b$ are not orthogonal. Thus
	\begin{equation}\label{30}
	w^{2}(t_{k}) \leq c|P(r_{k},\mathcal{U})| \leq c(I_{1} + I_{2}),
	\end{equation}
	where
	\begin{equation*}
	I_{1} = \int_{B_{r_{k}}\backslash B_{r_{k}^{*}}} |A(\mathcal{U})|dx,
	\end{equation*}
	
	\begin{equation*}
	I_{2} = \int_{B_{r_{k}^{*}}} |A(\mathcal{U})|dx,
	\end{equation*}
	and $A(\mathcal{U})  = \sum_{i} \left( x\cdot \nabla u_{i} + \frac{n-2}{2}u_{i}\right)\left((\Delta_{g} - \Delta)(u_{i}) - \sum_{j}A_{ij}(x)u_{j}\right)$.
	We will follow a series of calculations with the goal of obtaining better estimates for the term $I_1$. 
	It follows by the first inequality of $\eqref{24}$ that 
	\begin{equation*}
	w_{tt} - \left(\frac{n-2}{2}\right)^{2}w \geq -c_{1}w^{\frac{n+2}{n-2}} - c_{3}e^{-2t}w \geq -c_{1}w^{\frac{n+2}{n-2}} - c_{3}e^{-2t_{k}}w
	\end{equation*}
	for $t \geq t_{k}$, which implies 
	\begin{equation*}
	\frac{d}{dt}\left[ w_{t}^{2} - \left(\left(\frac{(n-2)}{2}\right)^{2} - c_{3}e^{-2t_{k}}\right)w^{2} + \frac{n-2}{n}c_{1}w^{\frac{2n}{n-2}}\right] \geq 0
	\end{equation*}
	for $t_{k} \leq t \leq t_{k}^{*}$.
	
	Hence, if $t_{k} \leq t \leq t_{k}^{*}$, then
	\begin{equation*}
	w_t(t)^2-g(w(t))+g(w(t_k))=\int_{t_{k}}^{t}\frac{d}{dt}\left(w_t^{2} - g(w)\right) \geq 0,
	\end{equation*}
	which implies that
	$$\frac{dw}{dt}\geq \sqrt{g(w)-g(w(t_k))}$$
	and so
	\begin{equation*}
	t - t_{k} = \int_{w(t_{k})}^{w(t)}\frac{dt}{dw}dw \leq \int_{w(t_{k})}^{w(t)} \frac{dw}{\sqrt{g(w) - g(w(t_{k}))}},
	\end{equation*}
	where 
	\begin{equation*}
	g(w) = \left(\left(\frac{(n-2)}{2}\right)^{2} - c_{3}e^{-2t_{k}}\right)w^{2} - \frac{n-2}{n}c_{1}w^{\frac{2n}{n-2}}.
	\end{equation*}
	
	Introducing the variable $\eta = \frac{w(t)}{w(t_{k})}$, we get
	\begin{equation}\label{25}
	t - t_{k} \leq \int_{1}^{\frac{w(t)}{w(t_{k})}} \frac{d\eta}{\sqrt{\overline{g}(\eta) - \overline{g}(1)}} = \int_{1}^{\frac{w(t)}{w(t_{k})}} \sqrt{\frac{\eta^{2} -1}{\overline{g}(\eta) - \overline{g}(1)}}\frac{d\eta}{\sqrt{\eta^{2} -1}},
	\end{equation}
	where
	\begin{equation*}
	\overline{g}(\eta) = \left(\left(\frac{(n-2)}{2}\right)^{2} - c_{3}e^{-2t_{k}}\right)\eta^{2} - \frac{n-2}{n}c_{1}w(t_{k})^{\frac{4}{n-2}}\eta^{\frac{2n}{n-2}}.
	\end{equation*}
	First, since $1 \leq \eta \leq \frac{w(t)}{w(t_{k})} \leq \frac{\varepsilon_{0}}{w(t_{k})}$, we have that
	\begin{equation*}
	\frac{w(t_{k})^{\frac{4}{n-2}}\left(\eta^{\frac{2n}{n-2}} - 1\right)}{\eta^{2} - 1} \leq cw(t_{k})^{\frac{4}{n-2}}\eta^{\frac{4}{n-2}} \leq c\varepsilon_{0}^{\frac{4}{n-2}},
	\end{equation*}
	and we observe that 
	\begin{equation*}
	w(t_{k})^{\frac{4}{n-2}}\int_{1}^{\frac{w(t)}{w(t_{i})}} \frac{\eta^{\frac{4}{n-2}}}{\sqrt{\eta^{2}-1}}d\eta \leq c. 
	\end{equation*}
	
	Now
	\begin{equation*}
	\left(\frac{\eta^{2}-1}{\overline{g}(\eta) - \overline{g}(1)}\right)^{\frac{1}{2}} \leq \frac{2}{n-2} + ce^{-2t_{k}} + c	\frac{w(t_{k})^{\frac{4}{n-2}}\left(\eta^{\frac{2n}{n-2}} - 1\right)}{\eta^{2} - 1}.
	\end{equation*}
	
	Finally, since 
	\begin{equation*}
	\int_{1}^{\frac{w(t)}{w(t_{k})}} \frac{d\eta}{\sqrt{\eta^{2}-1}} \leq c + \ln\frac{w(t)}{w(t_{k})},
	\end{equation*}
	we obtain
	\begin{equation*}
	\int_{1}^{\frac{w(t)}{w(t_{k})}} \frac{d\eta}{\sqrt{\bar{g}(\eta) - \bar{g}(1)}} \leq \left(\frac{2}{n-2} + ce^{-2t_{k}}\right)\ln\frac{w(t)}{w(t_{k})} + c.
	\end{equation*}
	
	From inequality $\eqref{25}$, we get
	\begin{equation}\label{26}
	t - t_{k} \leq \left(\frac{2}{n-2} + ce^{-2t_{k}}\right)\ln\frac{w(t)}{w(t_{k})} + c
	\end{equation}
	for all $t\in(t_{k}, t_{k}^{*})$.
	
	In order to estimate $t-t_{k}$ from below, we first observe that the second inequality in $\eqref{24}$ implies that
	\begin{equation*}
	w_{tt} - \left(\left(\frac{n-2}{2}\right)^{2} + ce^{-2t_{k}}\right)w\leq 0.
	\end{equation*}
	Then the function $w_{t}^{2} - \left(\left(\frac{n-2}{2}\right)^{2} + ce^{-2t_{k}}\right)w^{2}$ is decreasing in $(t_{k},t_{k}^{*})$, and therefore
	\begin{equation*}
	w_{t}^{2} - \left(\left(\frac{n-2}{2}\right)^{2} + ce^{-2t_{k}}\right)w^{2} \leq - \left(\left(\frac{n-2}{2}\right)^{2} + ce^{-2t_{k}}\right)w^{2}(t_{k}).
	\end{equation*}
	
	Hence
	\begin{equation*}
	w_t \leq \sqrt{\left(\left(\frac{n-2}{2}\right)^{2} + ce^{-2t_{k}}\right)(w^{2} - w^{2}(t_{k}))},
	\end{equation*}
	and then
	\begin{equation*}
	t-t_{k} = \int_{w(t_{k})}^{w(t)}\frac{dt}{dw} dw\geq \left(\frac{2}{n-2} - ce^{-2t_{k}}\right)\int_{w(t_{k})}^{w(t)} \frac{dw}{\sqrt{w^{2} - w^{2}(t_{k})}}.
	\end{equation*}
	
	Together with inequality $\eqref{26}$, we get for $t_{k} \leq t \leq t_{k}^{*}$, that
	\begin{equation}\label{27}
	\left(\frac{2}{n-2} - ce^{-2t_{k}}\right)\ln\frac{w(t)}{w(t_{k})} \leq t - t_{k} \leq 	\left(\frac{2}{n-2} + ce^{-2t_{k}}\right)\ln\frac{w(t)}{w(t_{k})} + c.
	\end{equation}
	Similarly one can prove that, for $\overline{t}_{k} \leq t \leq t_{k}$, it holds
	\begin{equation}\label{28}
	\left(\frac{2}{n-2} - ce^{-2\overline{t}_{k}}\right)\ln\frac{w(t)}{w(t_{k})} \leq t_{k} - t \leq 	\left(\frac{2}{n-2} + ce^{-2\overline{t}_{k}}\right)\ln\frac{w(t)}{w(t_{k})} + c.
	\end{equation}



	Once we get the above inequalities, let us go back to the estimates of the terms $I_1$ and $I_2$ in inequality $\eqref{30}$. Recall that by \eqref{23}, $|A(\mathcal{U})| \leq c|x|^{2-n}$, and therefore
	\begin{equation*}
	I_{2} \leq c(r_{k}^{*})^{2} = ce^{-2t_{k}^{*}}.
	\end{equation*}
	
	From the first inequality in $\eqref{27}$, we obtain
	\begin{equation*}
	w(t) \leq w(t_{k})\exp\left(\left(\frac{n-2}{2} + ce^{-2t_{k}}\right)(t-t_{k})\right),
	\end{equation*}
	which implies
	\begin{equation}\label{31}
	v(x) \leq cw(t_{k})\exp\left( - \left(\frac{n-2}{2} +ce^{-2t_{k}}\right)t_{k}\right)r^{2-n-ce^{-2t_{k}}}.
	\end{equation}
	Recall that, by the spherical Harnack inequality, Corollary \ref{DesHarnack}, for each coordinate function, $u_{i} \leq Cr^{\frac{2-n}{2}}$, $|\nabla u_{i}| \leq Cr^{-1}u_{i}$, and $|\nabla^{2}u_{i}|\leq Cr^{-2}u_{i}$, so
	\begin{equation*}
	|A(\mathcal{U})| \leq Cr^{\frac{2-n}{2}}u.
	\end{equation*}
	
	Using the estimate $\eqref{31}$ we obtain
	\begin{equation*}
	\displaystyle	I_{1} \leq cw(t_{k})e^{-\frac{n-2}{2}t_{k} - ce^{-2t_{k}}t_{k}}\int_{B_{r_{k}}\backslash B_{r_{k}^{*}}}|x|^{3-\frac{3n}{2} - ce^{-2t_{k}}}dx,
	\end{equation*}
	and so
	\begin{equation*}
	I_{1} \leq c w(t_{k}) e^{-2t_{k}}.
	\end{equation*}
	
	Therefore, from $\eqref{30}$ and the estimates for $I_1$ and $I_2$, we get 
	\begin{equation*}
	w^{2}(t_{k}) \leq cw(t_{k})e^{-2t_{k}} + ce^{-2t_{k}^{*}}.
	\end{equation*}
	
	Passing to subsequences, if necessary, we can suppose either
	\begin{equation}\label{32}
	w^{2}(t_{k}) \leq cw(t_{k})e^{-2t_{k}}
	\end{equation}
	or
	\begin{equation}\label{33}
	w^{2}(t_{k}) \leq ce^{-2t_{k}^{*}}.
	\end{equation}
	
	Define $L_{k} = - \frac{2}{n-2}\log w(t_{k})$ and choose $\delta > 0$ small. Then, from the first inequality in $\eqref{28}$, we get
	\begin{equation}\label{34}
	t_{k} - \bar{t}_{k} \geq (1-\delta) L_{k} - c,
	\end{equation}
	and adding to the first inequality in $\eqref{27}$, we obtain
	\begin{equation}\label{35}
	t_{k}^{*} - \bar{t}_{k} \geq (2 - 2\delta)L_{k} - c.
	\end{equation}
	
	If inequality $\eqref{32}$ holds, then $w(t_{k}) \leq ce^{-2t_{k}}$ and so $L_{k}\geq \frac{4}{n-2}t_{k} - c$. From inequality $\eqref{34}$, we conclude
	\begin{equation*}
	t_{k} - t_{k}^{*} \geq (1-\delta)\frac{4}{n-2}t_{k} -c,
	\end{equation*}
	and consequently
	\begin{equation*}
	\bar{t}_{k} \leq \left(\frac{n-6}{n-2} + \frac{4\delta}{n-2}\right)t_{k} + c.
	\end{equation*}
	The inequality above gives us a contradiction since $t_{k}^{*} \geq t_{k} \geq \bar{t}_{k} \rightarrow \infty$ as $k \rightarrow \infty$ and on the other hand  $\frac{n-6}{n-2} + \frac{4\delta}{n-2} < 0$ by our assumption $3 \leq n \leq 5$.
	
	If inequality $\eqref{33}$ holds, then $L_{k}\geq \frac{2}{n-2}t_{k}^{*} + c$. From inequality $\eqref{35}$, we get 
	\begin{equation*}
	\bar{t}_{k} \leq t_{k}^{*} - (2-2\delta)L_{k} + c,
	\end{equation*}
	and so
	\begin{equation*}
	\bar{t}_{k} \leq \left(\frac{n-6}{n-2} + 2\delta\right)t_{k}^{*} + c.
	\end{equation*}
	If $3 \leq n \leq 5$, this is again a contradiction by the same reasons as before. Then we conclude that $\lim_{x\rightarrow 0} u(x)|x|^{\frac{n-2}{2}} = 0$, and the result follows as consequence of Lemma \ref{l2}.
	
\end{proof}

As a consequence of the removable singularity theorem, we can now establish a fundamental lower bound.

\begin{corollary} Assume $3 \leq n \leq 5$ and that the potential $A$ satisfies the hypotheses \ref{H1} and \ref{H2}. Let $\mathcal{U}$ be a positive solution to the system \eqref{eq000} in $B^{n}_{1}(0)\backslash\{0\}$. If 0 is a nonremovable singularity, then there exists $c > 0$ such that
	\begin{equation*}
	|\mathcal{U}|(x) \geq cd_{g}(x,0)^{\frac{2-n}{2}}
	\end{equation*}
	for each $i$ and $0 < d_{g}(x,0) < \frac{1}{2}$.	
\end{corollary}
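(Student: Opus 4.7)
I would prove the corollary by a contradiction argument via blow-up analysis, combining the upper bound of Theorem \ref{upper}, the spherical Harnack inequality of Corollary \ref{DesHarnack}, and the removable-singularity result Theorem \ref{lower}. Since the singularity is nonremovable, Theorem \ref{lower} gives $P(\mathcal U)<0$. Suppose, toward a contradiction, that the claimed lower bound fails. Then there exists a sequence $x_{k}\to 0$ with $r_{k}:=|x_{k}|$ such that $r_{k}^{(n-2)/2}|\mathcal U(x_{k})|\to 0$, and the spherical Harnack inequality applied coordinate-wise forces $r_{k}^{(n-2)/2}\sup_{|x|=r_{k}}|\mathcal U(x)|\to 0$ as well.

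Consider the rescalings $\mathcal V_{k}(y)=r_{k}^{(n-2)/2}\mathcal U(r_{k}y)$ for $0<|y|<(2r_{k})^{-1}$, which satisfy
\[
-\Delta_{g_{k}}v_{i,k}+r_{k}^{2}\sum_{j=1}^{2}A_{ij}(r_{k}y)\,v_{j,k}=\tfrac{n(n-2)}{4}|\mathcal V_{k}|^{\frac{4}{n-2}}v_{i,k},
\]
where $(g_{k})_{lm}(y)=g_{lm}(r_{k}y)\to\delta_{lm}$ and $r_{k}^{2}A(r_{k}\,\cdot)\to 0$ locally uniformly. Theorem \ref{upper} gives $|\mathcal V_{k}(y)|\leq c|y|^{(2-n)/2}$ uniformly on compact subsets of $\mathbb R^{n}\setminus\{0\}$, so standard elliptic theory yields a subsequence converging in $C^{2}_{\mathrm{loc}}(\mathbb R^{n}\setminus\{0\})$ to a nonnegative solution $\mathcal V_{0}$ of the limit system \eqref{eq025}. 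Setting $y_{k}=x_{k}/r_{k}\in\mathbb S^{n-1}$, we have $|\mathcal V_{k}(y_{k})|=r_{k}^{(n-2)/2}|\mathcal U(x_{k})|\to 0$, so passing to a further subsequence $y_{k}\to y_{0}\in\mathbb S^{n-1}$ we get $|\mathcal V_{0}(y_{0})|=0$. Each coordinate $v_{i,0}$ is nonnegative and satisfies $-\Delta v_{i,0}\geq 0$ on the connected set $\mathbb R^{n}\setminus\{0\}$; the strong maximum principle (or Lemma \ref{L1}) therefore propagates the interior zero and forces $\mathcal V_{0}\equiv 0$.

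The key step is to transfer this vanishing to the Pohozaev invariant. A direct calculation shows that the Euclidean integrand in \eqref{P1} is scale-invariant under the transformation $\mathcal V_{k}(y)=r_{k}^{(n-2)/2}\mathcal U(r_{k}y)$, yielding
\[
P(1,\mathcal V_{k})=P(r_{k},\mathcal U).
\]
Since $\mathcal V_{k}\to 0$ in $C^{2}(\partial B_{1})$, the left-hand side tends to $0$. On the other hand, the Pohozaev identity (Lemma \ref{lemPI}) together with the decay estimate \eqref{23} guarantees that $P(\mathcal U)=\lim_{r\to 0}P(r,\mathcal U)$ exists as a well-defined invariant. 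Combining these two facts we obtain $P(\mathcal U)=0$, contradicting the strict inequality $P(\mathcal U)<0$ ensured by Theorem \ref{lower}.

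The main obstacle is of a conceptual rather than computational nature: one must be sure that the blow-up limit $\mathcal V_{0}$ is truly trivial, since without this the Pohozaev argument would only conclude $P(\mathcal U)\leq 0$, which we already know. The vanishing of $\mathcal V_{0}$ is forced by the assumption $r_{k}^{(n-2)/2}|\mathcal U(x_{k})|\to 0$ together with the strong maximum principle and Harnack, so the assumed failure of \eqref{l1} is exactly what lets us gain the final strict-to-equality upgrade needed for the contradiction.
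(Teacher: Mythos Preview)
Your argument is correct and follows the same overall strategy as the paper: assume the lower bound fails, extract a sequence of radii along which the solution is small in the scale-invariant sense, show that the Pohozaev integral along this sequence tends to zero so that $P(\mathcal U)=0$, and contradict Theorem~\ref{lower}. The execution, however, is different. The paper works with the spherical average $w(t)=r^{(n-2)/2}\overline u(r)$: it first invokes Lemma~\ref{l2} to ensure $\limsup_{t\to\infty}w(t)>0$, then selects a sequence of genuine \emph{local minima} $t_k$ with $w'(t_k)=0$ and $w(t_k)\to 0$, and finally estimates $P(r_k,\mathcal U)$ directly in terms of $w(t_k)$ and $|W|(t_k)$ using the Harnack inequality, so that the vanishing of $w'(t_k)$ kills the kinetic term and smallness of $w(t_k)$ kills the rest. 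You bypass this ODE picture entirely: you rescale, pass to a $C^2_{\mathrm{loc}}$ limit $\mathcal V_0$ of the limit system on $\mathbb R^n\setminus\{0\}$, use the strong maximum principle (or Lemma~\ref{L1}) to force $\mathcal V_0\equiv 0$, and then read off $P(r_k,\mathcal U)=P(1,\mathcal V_k)\to 0$ from $C^2$ convergence on $\partial B_1$. Your route is a bit more streamlined, since it does not require locating local minima or appealing separately to Lemma~\ref{l2}; the paper's route, on the other hand, stays within the ODE framework already set up in Lemmas~\ref{n1}--\ref{n2} and so reuses those computations.
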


\begin{proof} Suppose by contradiction that this is not true.
	
	Then $\displaystyle\liminf_{t\rightarrow \infty}w(t) = 0$, where $w(t) = r^{\frac{n-2}{2}}\overline{u}(r)$, $u= u_{1}+u_2$ and $t= - \log r$. As 0 is a nonremovable singularity, we also have $\displaystyle\limsup_{t \rightarrow \infty}w(t) > 0$, otherwise we contradict the Lemma \ref{l2} . Therefore there exists a sequence $t_{k}\rightarrow \infty$ such that $w'(t_{k}) = 0$ and $\displaystyle\lim_{k\rightarrow \infty}w(t_{k}) = 0$. So, if $r_{k} = e^{-t_{k}}$ we can check that 
	$$\begin{array}{rcl}
	\displaystyle P(r_{k},\mathcal{U})& \leq & \displaystyle\sigma_{n-1}\left(\frac{1}{2}\partial_{t}w(t_{k})^{2} - \frac{1}{2}\left(\frac{n-2}{2}\right)^{2}w^{2}(t_{k})\right.\\
	& &\displaystyle\left.+ \frac{(n-2)^2}{8}|W|^{\frac{2n}{n-2}}(t_{k})\right)(1 + o(1))
	\end{array}$$
	where $|W|^{2} = w_{1}^{2} + w_{2}^2$, and $w_{i}(t) = r^{\frac{n-2}{2}}\overline{u}_{i}(r)$. But, in this case
	\begin{equation*}
	P(\mathcal{U}) = \lim_{i\rightarrow \infty}P(r_{k},\mathcal{U}) = 0,
	\end{equation*}
	which is a contradiction. This finishes the proof.
\end{proof}

\section{Convergence to a Radial Solution} \label{crs}

Our main goal in this section is to prove that a local singular solution to our system is asymptotic to a radial Fowler-type solution, near the nonremovable isolated singularity.

\begin{theorem}
	Suppose that $\mathcal{U}$ is a solution of the system \eqref{S} in the punctured ball $B_{1}^{n}(0)\backslash \{0\}$ and that the potential $A$ satisfies the hypotheses \ref{H1} and \ref{H2}. If there exist positive constants $c_{1}$ and $c_{2}$ such that 
	\begin{equation}\label{hip}
	c_{1} |x|^{\frac{2-n}{2}}\leq |\mathcal{U}|(x)\leq c_2 |x|^{\frac{2-n}{2}}
	\end{equation}
	then there exists a Fowler-type solution $\mathcal{U}_{0} = u_0 \Lambda$ of \eqref{LS}, where $u_0$ is a Fowler solution such that 
	\begin{equation}
	\mathcal{U}(x) = (1 + O(|x|^{\alpha}))\mathcal{U}_{0}(x)
	\end{equation}
	as $x \rightarrow 0$, for some $\alpha > 0$.
\end{theorem}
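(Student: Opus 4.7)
The plan is to work in cylindrical coordinates via the Fowler change of variables $v_i(t,\theta) = |x|^{(n-2)/2} u_i(x)$ with $t = -\log|x|$ and $\theta = x/|x|$. Under hypothesis \eqref{hip}, the vector $\mathcal{V}=(v_1,v_2)$ is bounded above and away from zero on $[T_0,\infty)\times\mathbb{S}^{n-1}$, and the spherical Harnack estimate of Corollary~\ref{DesHarnack} together with standard elliptic regularity gives uniform $C^2$ bounds. Writing \eqref{S} in these coordinates one obtains a perturbed system of the form $H^i(\mathcal{V})=E^i(t,\theta,\mathcal{V})$, where $H^i$ is the limit operator introduced in \eqref{jf5} and, on account of \eqref{eq02} together with (H2), the error satisfies $|E^i|+|\nabla E^i|\le C e^{-2t}$.

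First I would run a compactness argument: for any sequence $t_k\to\infty$ the translates $\mathcal{V}^k(t,\theta):=\mathcal{V}(t+t_k,\theta)$ solve the perturbed system with right-hand side tending to zero uniformly on compact sets of $\mathbb{R}\times\mathbb{S}^{n-1}$. After extracting a subsequence, $\mathcal{V}^k\to\mathcal{V}^\infty$ in $C^2_{\mathrm{loc}}$, where $\mathcal{V}^\infty$ solves the limit system \eqref{LS} in cylindrical form and is bounded and bounded away from zero. The classification Theorem~\ref{class1} then forces $\mathcal{V}^\infty(t,\theta)=v_\varepsilon(t+T)\Lambda$ for some Fowler parameter $\varepsilon$, translation $T$, and $\Lambda\in\mathbb{S}^1_+$. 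Thus every sequential limit of translates is a Fowler-type solution, and the remaining task is to show that $\varepsilon$, $T$ and $\Lambda$ are in fact unique and that the approach to the limit is at an exponential rate.

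The core of the argument follows the Leon Simon--type iteration used in \cite{KMPS} and \cite{marques}. I would fix a candidate Fowler-type solution $\mathcal{V}_0=v_\varepsilon(\cdot+T)\Lambda$ and write $\mathcal{V}=\mathcal{V}_0+\phi$. The linearization about $\mathcal{V}_0$ is the operator $\mathcal{L}_\varepsilon$ of \eqref{jf4}, and $\phi$ satisfies
\[
\mathcal{L}_\varepsilon\phi=E+Q(\phi),
\]
where $Q$ is quadratic in $\phi$ and $|E|\le Ce^{-2t}$. From the Jacobi field analysis of Section~\ref{jf}, $\mathcal{L}_\varepsilon$ decomposes along the spherical harmonics: on the $k\ge 1$ sectors all Jacobi fields grow or decay at fixed exponential rates, while on the $k=0$ sector the kernel is four-dimensional, spanned by two bounded fields $\phi_{\varepsilon,0}^1,\phi_{\varepsilon,0}^3$ and two at-most-linearly-growing fields $\phi_{\varepsilon,0}^2,\phi_{\varepsilon,0}^4$ obtained by differentiating the four-parameter family of Fowler-type solutions in $(\varepsilon,T,\Lambda)\in(0,((n-2)/n)^{(n-2)/4})\times\mathbb{R}\times\mathbb{S}^1_+$. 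On each dyadic annulus $[\tau,\tau+1]\times\mathbb{S}^{n-1}$ I would modify the parameters $(\varepsilon,T,\Lambda)$ so that the projection of $\phi$ onto the non-exponential Jacobi fields vanishes at $t=\tau$; the remaining components lie in the hyperbolic part of $\mathcal{L}_\varepsilon$ and obey a three-annulus inequality, which, combined with the $e^{-2t}$ forcing, yields an improvement of the decay by a fixed factor at each scale. Iterating over dyadic annuli produces the estimate $\|\phi(t,\cdot)\|_{C^2(\mathbb{S}^{n-1})}\le Ce^{-\alpha t}$ and, in addition, convergence of the parameters to limits $(\varepsilon_*,T_*,\Lambda_*)$, with $\mathcal{U}_0=u_{\varepsilon_*}(\cdot-\log\,+T_*)\Lambda_*$ the desired Fowler-type solution. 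Rewriting in the original variable $x$ gives $\mathcal{U}(x)=(1+O(|x|^\alpha))\mathcal{U}_0(x)$.

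The main obstacle is the linearly growing Jacobi field $\phi_{\varepsilon,0}^4=\phi^-_{\varepsilon,0}\overline{\Lambda}$ in the $k=0$ sector, which, unlike the scalar setting, requires the variation of the angular parameter $\Lambda$ in $\mathbb{S}^1_+$ to be absorbed. Balancing this field simultaneously with $\phi_{\varepsilon,0}^2$ (the Delaunay parameter direction) forces a simultaneous choice of $\varepsilon$ and $\Lambda$ at each dyadic scale, and one must verify that the corresponding projection map has uniformly bounded inverse; the restriction $3\le n\le 5$ is used, as in \cite{marques}, precisely to guarantee that the $O(e^{-2t})$ decay of $E$ dominates the indicial exponents produced by the $k=1$ Jacobi fields, so that the iterated decay estimate actually improves geometrically. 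Once this balance is set up, the remaining estimates reduce to standard ODE and Gronwall arguments applied component by component in the spherical harmonic decomposition.
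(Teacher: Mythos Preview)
Your overall architecture---cylindrical coordinates, compactness of translates, classification of limits via Theorem~\ref{class1}, then a Simon-type iteration driven by the Jacobi field analysis of Section~\ref{jf}---matches the paper. But your handling of the zero-mode Jacobi fields contains a genuine gap.

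You propose to adjust the four parameters $(\varepsilon,T,\Lambda)$ at each scale so as to project away the non-exponential Jacobi fields, and in particular you say that the linearly growing field $\phi_{\varepsilon,0}^{4}=\phi_{\varepsilon,0}^{-}\overline{\Lambda}$ ``requires the variation of the angular parameter $\Lambda$ to be absorbed''. This is not correct: varying $\Lambda$ along $\mathbb{S}^{1}_{+}$ produces the \emph{bounded} field $\phi_{\varepsilon,0}^{3}=v_{\varepsilon}\overline{\Lambda}$, while $\phi_{\varepsilon,0}^{4}$ is built from the abstract second solution $\phi_{\varepsilon,0}^{-}$ of $\overline{L}_{\varepsilon,0}$ and is \emph{not} the derivative of any family of solutions. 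There is therefore no parameter whose adjustment kills the $a_{4}$-coefficient, and your iteration as described cannot close. The same issue lurks for $a_{2}$: although $\phi_{\varepsilon,0}^{2}=\partial_{\varepsilon}v_{\varepsilon}\cdot\Lambda$ genuinely comes from varying $\varepsilon$, you give no mechanism ensuring that the sequence of adjusted necksizes converges, nor that different sequential limits share the same $\varepsilon$.

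The paper resolves both points with the Pohozaev invariant, which you do not invoke. First, since $P(\mathcal{V}_{k})\to P(\mathcal{V})$ and $P(\mathcal{V}_{\varepsilon})$ determines $\varepsilon$, every sequential limit has the \emph{same} necksize; there is no need to vary $\varepsilon$ in the iteration. Second, expanding $P(0,\mathcal{V}_{k})=P(v_{\varepsilon,T})+O(e^{-2\tau_{k}})$ to first order in $\overline{\eta}_{k}$ forces the coefficients $a_{2}$ and $a_{4}$ of the linearly growing fields to vanish (Claim~2 of the paper). With those gone, the limit Jacobi field is $a_{1}\phi_{\varepsilon,0}^{1}+a_{3}\phi_{\varepsilon,0}^{3}+\boldsymbol{\tilde{\varphi}}$; the paper then adjusts only the translation $T$ (killing $a_{1}$), shows $|a_{3}|$ can be made uniformly small, and iterates. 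Your remark that the restriction $3\le n\le 5$ enters here through the $k=1$ indicial rates is also off: in this theorem the dimension hypothesis plays no role beyond what is already encoded in \eqref{hip}; it was used earlier to obtain the upper and lower bounds.
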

\begin{proof} First we observe that \eqref{hip} implies that the origin is a nonremovable singularity. Thus, by Theorem \ref{lower} we get that $P(\mathcal U)< 0$.

	
	Now, in cylindrical coordinates, consider $v_{i}(t,\theta) = |x|^{\frac{2-n}{2}}u_i(x)$, where $t = -\log |x|$ and $\theta = \frac{x}{|x|}$. 
	
	
	Let $\{\tau_k\}$ be a sequence of real numbers such that $\tau_k\rightarrow \infty$. Consider the translated sequence $v_{i,k}(t,\theta) = v_{i}(t + \tau_k, \theta)$ defined in $(-\tau_k, \infty)\times \mathbb{S}^{n-1}$. By $\eqref{hip}$ we get that
	\begin{equation*}
	c_1 \leq |\mathcal{V}_{k}(t,\theta)| \leq c_2,
	\end{equation*}
	where $\mathcal V_k=(v_{1,k},v_{2,k})$. Consequently, by standard elliptic estimates, we get the uniform boundedness of any derivative for $t > 0$ .  Since $v_{i,k}$ satisfies,
	\begin{equation*}
	\mathcal{L}_{\hat{g_{k}}}(v_{i,k}) - \sum_{j}B_{ij}v_{j,k} + \frac{n(n-2)}{4}|\mathcal{V}_{k}|^{\frac{4}{n-2}}v_{i,k} = 0,
	\end{equation*}
	where $\mathcal L_{\hat{g_{k}}}$ and $B_{ij}$ are given by \eqref{eq02} and $\hat{g}_k:=dt^2+d\theta^2+O(e^{-2t}) \rightarrow  dt^2 +d\theta^2$. Standard elliptic estimates imply that there exists a subsequence, also denoted by $v_{i,k}$, which converges in the $C^2_{loc}$ topology, to a positive solution of
	\begin{equation*}
	\partial_{t}^{2}v_{i,0} + \Delta_{\mathbb{S}^{n-1}}v_{i,0} - \frac{(n-2)^2}{4}v_{i,0} + \frac{n(n-2)}{4}|\mathcal{V}_0|^{\frac{4}{n-2}}v_{i,0} = 0,
	\end{equation*}
	defined in the whole cylinder. By the characterization result given by Theorem \ref{class}, such limit is a Fowler-type solution and we know that there exists a Fowler solution $v_{\varepsilon}$ and a vector in the unit sphere with positive coordinates $\Lambda$ such that $ \mathcal{V}_{\varepsilon}(t) = \Lambda v_{\varepsilon}(t)$. Hence $\mathcal{V}_{\varepsilon}$ does not depend on $\theta$, then we necessarily have that any angular derivative $\partial_{\theta}v_{i,k}$ converges uniformly to zero. 
	
	Besides, we claim that
	$$\begin{array}{rcl}
	   v_{i,k}(t,\theta) & = & \overline{v}_{i,k}(t)(1+o(1))\\
	\nabla v_{i,k}(t,\theta) & = & -\overline{v}'_{i,k}(t)(1+o(1)),
	  \end{array}
$$
	as $t \rightarrow \infty$. In fact, suppose that the first equality above is false. Then there exist $\overline\varepsilon > 0$ and sequences $\tau_k \rightarrow \infty$, $\theta_k \rightarrow \theta \in \mathbb{S}^{n-1}$ such that
	\begin{equation*}
	\left| \frac{v_{i,k}(\tau_k, \theta_k)}{\overline{v}_{i,k}(\tau_k)} -1\right| \geq \overline\varepsilon
	\end{equation*}
	for some $i \geq 1$. This is a contradiction because, after passing to a subsequence, $\mathcal{V}_k$ converges to a rotationally symmetric Fowler-type solution $\mathcal{V}_0$. The second inequality follows from similar arguments.
	
	
	In the cylindrical setting the Pohozaev integral $P(t,\mathcal{V}) = P(e^{-t}, \mathcal{U})$ becomes
	\begin{equation*}
	\begin{aligned}
	P(t,\mathcal{V}) := \int_{t\times\mathbb{S}^{n-1}}\left( \frac{1}{2}|\partial_t \mathcal V|^2 - \frac{1}{2}|\nabla_{\theta}\mathcal V|^2 - \frac{(n-2)^2}{8}|\mathcal V|^{2} + \frac{(n-2)^2}{8} |\mathcal{V}|^{\frac{2n}{n-2}}\right)d\sigma_1.
	\end{aligned}
	\end{equation*}
	Hence
	\begin{equation}\label{eq002}
	P(\mathcal{V}_{\varepsilon}) := P(0,\mathcal{V}_\varepsilon) = \lim_{k\rightarrow \infty}P(0, \mathcal{V}_k) = \lim_{k\rightarrow \infty}P(\tau_k, \mathcal{V}) = P(\mathcal{V}).
	\end{equation}
	
	So we can conclude that the necksize $\varepsilon$ of the limit function is independent of the sequence of numbers $\tau_k$. Therefore, for each sequence $\tau_k\rightarrow\infty$ the correspondent sequence $\mathcal V_k$ converges to a function $\mathcal V_{\varepsilon,T}(t)=\Lambda v_\varepsilon(t+T)$, with $\Lambda\in\mathbb S^{1}_+$, for some $T\in\mathbb R$ which depends on the sequence $\tau_k$.
	
	We will show that there exists $T_0\in\mathbb R$ such that $\mathcal V_k$ converges to $\mathcal V_{\varepsilon,T_0}$ for any sequence $\tau_k\rightarrow\infty$. The ideia is to use a delicate rescaling argument due originally to Leon Simon. In order of that we will prove several claims using the Jacobi fields studied in subsection \ref{jf} as a tool.
	
	Let $T_{\varepsilon}$ be the period of $\mathcal{V}_{\varepsilon}$ and $A_{\tau} = \displaystyle\sup_{t\geq 0}|\partial_{\theta}\mathcal{V}_{\tau}|$, where $\mathcal{V}_{\tau}(t,\theta) = \mathcal{V}(t + \tau,\theta)$. Note that $A_\tau<\infty$, since $|\partial_{\theta}\mathcal{V}_{\tau}|$ converges uniformly to zero as $t\rightarrow\infty$.\\
	
	\noindent\textbf{Claim 1:} For every $c > 0$, there exists a positive integer $N$ such that, for any $\tau > 0$, either
	\begin{enumerate}
		\item $A_{\tau} \leq ce^{-2\tau}$ or
		\item $A_{\tau}$ is attained at some point in $I_{N}\times \mathbb{S}^{n-1}$, where $I_{N} = [0,NT_{\varepsilon}]$.
	\end{enumerate}
	
	Suppose the Claim is not true. 
	
	Then there exist a constant $c>0$ and sequences $\tau_k, s_k \rightarrow \infty, \theta_k \in \mathbb{S}^{n-1}$ such that $|\partial_{\theta}\mathcal{V}_{\tau}|(s_k, \theta_k) = A_{\tau_k}$ and $A_{\tau_k} > c e^{-2\tau_k}$ as $k\rightarrow \infty$. Then we can translate back further $s_k$ and define $\tilde{v}_{i,k}(t,\theta) = v_{i,k}(t+s_k, \theta)$. Define $\varphi_{i,k} = A_{\tau_k}^{-1}\partial_{\theta}\tilde{v}_{i,k}$ and note that $|\boldsymbol{\varphi_{k}}|\leq 1$, where $\boldsymbol{\varphi_{k}} = (\varphi_{1,k}, \varphi_{2,k})$. Now, we have
	\begin{equation*}
	\mathcal{L}_{\hat{g}_k}(\tilde{v}_{i,k}) - \sum_{j}\tilde{B}_{ij}\tilde{v}_{j,k} +  \frac{n(n-2)}{4}|\tilde{\mathcal{V}}_{k}|^{\frac{4}{n-2}}\tilde{v}_{i,k} = 0,
	\end{equation*}
	where the quantities with tilde are the originals replaced $t$ by $t+\tau_k + s_k$. This implies that 
	\begin{equation*}	
	L_{cyl}(\tilde{v}_{i,k}) +  \frac{n(n-2)}{4}|\tilde{\mathcal{V}}_{k}|^{\frac{4}{n-2}}\tilde{v}_{i,k} = L_{cyl}(\tilde{v}_{i,k}) - \mathcal{L}_{\hat{g}_{k}}(\tilde{v}_{i,k}) + \sum_{j}\tilde{B}_{ij}\tilde{v}_{j,k}.
	\end{equation*}
	Taking derivative with respect to $\theta$ and multipling by $A_{\tau_k}^{-1}$, we get
$$L_{cyl}(\varphi_{i,k}) +  \frac{n(n-2)}{4}|\tilde{\mathcal{V}}_{k}|^{\frac{4}{n-2}}\varphi_{i,k}+n|\tilde{\mathcal V}_k|^{\frac{4}{n-2}-2}\tilde v_{ik}\langle \tilde{\mathcal V}_k,\boldsymbol{\varphi_{k}}\rangle $$
$$= L_{cyl}(\varphi_{i,k}) - \mathcal{L}_{\hat{g}_{k}}(\varphi_{i,k}) + \sum_{j}\tilde{B}_{ij}\varphi_{j,k}.$$

	From \eqref{cur} we have
	\begin{equation*}
	\mathcal{L}_{\hat{g}_{k}}(\varphi_{i,k}) = \Delta_{\hat{g}_{k}} \varphi_{i,k} - \frac{(n-2)^2}{4}\varphi_{i,k} -\frac{n-2}{4}e^{-t}\partial_r\log|g_k|\circ\Phi(t,\theta)\varphi_{i,k}.
	\end{equation*}
	But using the fact $\hat{g}_k = dt^2 + d\theta^2 + O(e^{-2t})$ and the local expression of the laplacian in this metric, we find that 
	\begin{equation*}
	\Delta_{\hat{g}_{k}}\varphi_{i,k} = \partial^2_{t}\varphi_{i,k} + \Delta_{\mathbb{S}^{n-1}}\varphi_{i,k} + O(e^{-2(t+\tau_k + s_k)}).
	\end{equation*}
	This implies that 
	\begin{equation*}
	L_{cyl}(\varphi_{i,k}) +  \frac{n(n-2)}{4}|\tilde{\mathcal{V}}_{k}|^{\frac{4}{n-2}}\varphi_{i,k} + n|\tilde{\mathcal{V}}_{k}|^{\frac{4}{n-2}-1}\tilde{v}_{i,k}\langle \tilde{\mathcal{V}}_k ,  \boldsymbol{\varphi_{k}}\rangle  = A_{\tau_k}^{-1}e^{-2(\tau_k + s_k)}O(e^{-2t})
	\end{equation*}
	where $\boldsymbol{\varphi_{k}} = (\varphi_{1,k}, \varphi_{2,k})$. 
	
	Now we can use elliptic theory to extract a subsequence $\varphi_{i,k}$ which converges in compact subsets to a nontrivial and bounded Jacobi field $\boldsymbol{\varphi} = (\varphi_{1}, \varphi_{2})$ which satisfies the following system
	\begin{equation*}
	L_{cyl}(\varphi_{i}) +  \frac{n(n-2)}{4}v_{\varepsilon}^{\frac{4}{n-2}}\varphi_{i} + n\Lambda_{i}\langle \Lambda, \boldsymbol{\varphi}\rangle v_{\varepsilon}^{\frac{4}{n-2}} = 0.
	\end{equation*}
	Since each coordinate function of the limit $\varphi_{i}$ has no zero eingencomponent relative to $\Delta_{\theta}$, we get a contradiction because a Jacobi field with such property is necessarily unbouded. This proves the Claim 1.
	
	
	Now suppose we have a sequence $v_{i,k}(t,\theta)$  converging to  $\Lambda_{i}v_{\varepsilon}(t+T)$ as $k \rightarrow\infty$. Define
	\begin{equation*}
	w_{i,k}(t,\theta) = v_{i,k}(t,\theta) - \Lambda_{i}v_{\varepsilon}(t+T).
	\end{equation*}
	Set $\mathcal{W}_{k}=(w_{1,k},w_{2,k})$, $\eta_{k} = b\displaystyle\max_{I_{N}}|\mathcal{W}_{k}|$, $\overline{\eta}_{k} = \eta_{k}+ e^{-(2-\delta)\tau_k}$ and $\varphi_{i,k} = \overline\eta_{k}^{-1}w_{i,k}$, where $\delta > 0$ is a small number and $b>0$ is a fixed number to be chosen later. Note that $|(\varphi_{1,k},\varphi_{2,k})|\leq b^{-1}$ on $I_N$. Then
	\begin{equation}\label{eq001}
	\mathcal{L}_{\hat{g}_k}(w_{i,k})  +  \frac{n(n-2)}{4}\left(|\mathcal{V}_{k}|^{\frac{4}{n-2}}v_{i,k} - \Lambda_{i}v_{\varepsilon}^{\frac{n+2}{n-2}} \right)=  E_{i,k}
	\end{equation}
	where $E_{i,k} = \sum_{j}B_{ij}v_{j,k} + 	\Lambda_{i}(L_{cyl} - \mathcal{L}_{\hat{g}_k})v_{\varepsilon}$. First note that by \eqref{eq02} we get that $E_{i,k} = O(e^{-2(\tau_k+t)})$ when $t \rightarrow \infty$. Second, observe that 
	\begin{equation*}
	|\mathcal{V}_{k}|^{\frac{4}{n-2}}v_{i,k} - \Lambda_{i}v_{\varepsilon}^{\frac{n+2}{n-2}}  = |\mathcal{V}_{k}|^{\frac{4}{n-2}}w_{i,k} + \Lambda_{i}v_{\varepsilon}\frac{|\mathcal{V}_{k}|^{\frac{4}{n-2}} - v_{\varepsilon}^{\frac{4}{n-2}}}{|\mathcal V_k|^2-v_\varepsilon^2}\sum_jw_{j,k}(v_{j,k}+\Lambda_iv_\varepsilon).
	\end{equation*}
	Multipling \eqref{eq001} by $\overline\eta^{-1}$ and taking the limit $k\rightarrow\infty$ we get
	\begin{equation*}
	L_{cyl}(\varphi_{i}) +  \frac{n(n-2)}{4}v_{0}^{\frac{4}{n-2}}\varphi_{i} + n\Lambda_{i}\langle \Lambda, \boldsymbol{\varphi}\rangle v_{\varepsilon}^{\frac{4}{n-2}} = 0,
	\end{equation*}
	on the whole cylinder, where $\boldsymbol{\varphi}=(\varphi_1,\varphi_2)$ is a Jacobi field. \\
	
	\noindent\textbf{ Claim 2:} The Jacobi field $\boldsymbol{\varphi}$ is bounded for $t \geq 0$.
	
	To prove this claim we will use the analysis done in Section \ref{jf}.
	
	By the spectral decomposition for the Laplacian in the sphere, we know that it is possible to write the Jacobi field as
	\begin{equation*}
	\boldsymbol{\varphi} = a_{1}\phi^{1}_{\varepsilon,0} + a_{2}\phi^{2}_{\varepsilon,0} + a_{3}\phi^{3}_{\varepsilon,0} + a_{4}\phi^{4}_{\varepsilon,0} + \boldsymbol{\tilde{\varphi}}
	\end{equation*}
	where $\phi^{i}_{\varepsilon,0}$ are the linearly independents Jacobi fields corresponding to the  eingencomponent independent of $\theta$, and $\boldsymbol{\tilde{\varphi}}$ denotes the projection onto the orthogonal complement. We also know that the functions $\phi_{\varepsilon,0}^{1}$ and $\phi_{\varepsilon,0}^{3}$ are bounded and $\phi_{\varepsilon,0}^{2}$ and $\phi_{\varepsilon,0}^{4}$ are linearly growing. 
	
	Let us show that $\boldsymbol{\tilde{\varphi}}$ is bounded by proving that each $\partial_{\theta}\tilde{\varphi}_{i} = \partial_{\theta}\varphi_{i}$ is bounded for $t \geq 0$. In fact, the function $\partial_{\theta}\varphi_{i}$ is the limit of $\overline{\eta}_{k}^{-1}\partial_{\theta}v_{i,k}$, and we can suppose that $\partial_{\theta}\varphi_{i}$ is nontrivial, otherwise the result is immediate. 
	
	
	
	If the first item of Claim 1 happens then
	$$\sup_{t\geq 0}\left(\overline{\eta}_{k}^{-1}|\partial_{\theta} v_{i,k}|\right)\leq \frac{ce^{-2\tau_k}}{\eta_k+e^{-(2-\delta)\tau_k}}\leq C.$$
	While if the second item of Claim 1 is true then
	$$\sup_{t\geq 0}\left(\overline{\eta}_{k}^{-1}|\partial_{\theta} v_{i,k}|\right) \leq \sup_{t\geq 0}\left(\overline{\eta}_{k}^{-1}|\partial_{\theta} \mathcal{V}_{k}|\right) = \sup_{t\in I_N}\left(\overline{\eta}_{k}^{-1}|\partial_{\theta} \mathcal{V}_{k}|\right) \leq C,$$
	since the sequence $\overline{\eta}_{k}^{-1}|\partial_{\theta} \mathcal{V}_{k}|$ converges in the $C^2_{loc}$ toplogy. Therefore each $\tilde{\varphi}_{i}$ is bounded for $t \geq 0$, hence exponentially decaying.
	
	To end the proof of the Claim 2 we need to show that $a_{2} = a_{4} = 0$. To see this note that the convergence $\varphi_{i,k}=\overline\eta_k^{-1}w_{i,k}\rightarrow\varphi_i$ implies that
	$$\begin{array}{rcl}
	   \mathcal{V}_{k} & = & \Lambda v_{\varepsilon,T}+\overline\eta_k\boldsymbol\varphi+o(\overline{\eta}_{k})\\
	   & = & \Lambda v_{\varepsilon,T}+ \overline{\eta}_{k}(a_{1}\phi_{\varepsilon,0}^{1} + a_{2}\phi_{\varepsilon,0}^{2} + a_{3}\phi_{\varepsilon,0}^{3} + a_{4}\phi_{\varepsilon,0}^{4} + \boldsymbol{\tilde{\varphi}}) + o(\overline{\eta}_{k}),
	  \end{array}
$$
	where $v_{\varepsilon,T}(t)=v_{\varepsilon}(t+T)$.
	
	On the other hand by \eqref{23}, \eqref{eq002} and the Pohozaev identity, Lemma \ref{lemPI},we have that
	\begin{equation*}
	P(0,\mathcal{V}_{k}) = P(\tau_k,\mathcal{V}) = P(\mathcal{V}) + O(e^{-2\tau_k}) = P(v_{\varepsilon,T}) + O(e^{-2\tau_k}).
	\end{equation*}
	Since $\displaystyle\lim_{k\rightarrow \infty}(\overline{\eta}_{k}^{-1}e^{-2\tau_k}) = 0$, we would have a contradiction in case $a_{2}$ or $a_{4}$ is not zero. Thus each $\boldsymbol{\varphi}$ is bounded for $t \geq 0$. 
	
	Now we will show that there exists some $T$ so that the difference between $\mathcal{V}$ and $\mathcal{V}_{\varepsilon,T}=\Lambda v_{\varepsilon,T}$ goes to zero as $t \rightarrow \infty$. 
	
	Since we do not know the correct translation parameter, define $\mathcal{V}_{\tau}(t,\theta) = \mathcal{V}(t+\tau, \theta)$ and $ \mathcal{W}_{\tau}(t,\theta)=  \mathcal{V}_{\tau}(t,\theta)-\Lambda v_{\varepsilon}(t)$. Let $C_1 > 0$ be a fixed constant and consider the interval $I_{N}$ as before in the Claim 1. Set also $\eta(\tau) = b\displaystyle\max_{I_N}|\mathcal{W}_{\tau}|$ and $\overline{\eta}(\tau) = \eta(\tau) + e^{-(2-\delta)\tau}$, where $b>0$ is a fixed constant to be chosen later. We observe that $\eta(\tau)\rightarrow 0$ as $\tau\rightarrow\infty$. Let us prove the following claim.\\
	
	\noindent\textbf{Claim 3:} If $N$, $b$ and $\tau$ are sufficiently large and $\overline{\eta}$ is sufficiently small, then there exists $s$ with $|s|\leq C_1\overline{\eta}(\tau)$ so that $\overline{\eta}(\tau + NT_{\varepsilon} + s) \leq \frac{1}{2}\overline{\eta}(\tau)$.\\
	
	Suppose the claim is not true. Then there exists some sequence $\tau_{k}\rightarrow \infty$ such that $\overline{\eta}(\tau_{k})  \rightarrow 0$ and for any $s$ satisfying $|s|\leq C_1 \overline{\eta}(\tau_k)$ we have that $\overline{\eta}(\tau_{k} + NT_{\varepsilon} + s) > \frac{1}{2}\overline{\eta}(\tau_k)$.	Define $\varphi_{i. k} = \overline{\eta}(\tau_k)^{-1}w_{i, \tau_{k}}$, similarly to the previously claim. We can suppose that $\varphi_{i,k}$ converges in $C^{\infty}$ on compact sets to a Jacobi field, which by Claim 2 it is bounded for $t \geq 0$. So we can write 
	\begin{equation}\label{eq003}
		\boldsymbol{\varphi} = a_{1}\phi^{1}_{\varepsilon,0} + a_{3}\phi^{3}_{\varepsilon,0} + \boldsymbol{\tilde{\varphi}}
	\end{equation}
	where $\boldsymbol{\tilde{\varphi}}$ has exponential decay. Note that $|\boldsymbol\varphi|\leq b^{-1}$ on $I_{N}$, which implies that $a_1$ and $a_3$ are uniformily bounded, independently of the sequence $\tau_k$. Moreover, since $\phi_{\varepsilon,0}^1=v_\varepsilon'\Lambda$ and $\phi_{\varepsilon,0}^3=v_\varepsilon\overline\Lambda$ then 
\begin{equation}\label{eq008}
|a_3v_\varepsilon|\leq |\langle \boldsymbol\varphi,\overline\Lambda\rangle|+|\langle \boldsymbol{\tilde{\varphi}},\overline\Lambda\rangle|\leq b^{-1}+|\boldsymbol{\tilde{\varphi}}|
\end{equation}
	on $I_N$. We know that $v_\varepsilon\geq\varepsilon$ and $\boldsymbol{\tilde\varphi}$ decreases exponentially with a fixed rate, and so we can choose $b$ and $N$ sufficiently large such that $|a_3|$ is sufficiently small.
	
Set $s_{k} = - \overline{\eta}(\tau_k)a_1$ whose absolute value is less than $C_1\overline{\eta}(\tau_k)$ if we choose $C_1$ sufficiently large. Hence for $t \in [0, 2NT_{\varepsilon}]$ we have
	\begin{eqnarray*}
		\mathcal{W}_{\tau_{k} + s_{k}}(t,\theta) &=& \mathcal{V}(t + \tau_k - \overline{\eta}(\tau_{k})a_1, \theta) - \Lambda v_{\varepsilon}(t)\\
		& = & \mathcal{V}_{\tau_{k}}(t - \overline{\eta}(\tau_{k})a_1, \theta) - \Lambda v_{\varepsilon}(t - \overline{\eta}(\tau_{k})a_1)\\
		& & -\overline\eta(\tau_k)a_1  \Lambda\frac{v_{\varepsilon}(t - \overline{\eta}(\tau_{k})a_1) -  v_{\varepsilon}(t) }{-\overline\eta(\tau_k)a_1}\\ 
		& = & \overline{\eta}(\tau_{k})\boldsymbol{\varphi}_{k}(t- \overline{\eta}(\tau_{k})a_1,\theta) - \overline{\eta}(\tau_{k})a_{1}\phi_{\varepsilon,0}^1+ o(\overline{\eta}(\tau_{k}))\\
		&=& \mathcal{W}_{\tau_{k}}(t,\theta) - \overline{\eta}(\tau_{k})a_{1}\phi_{\varepsilon,0}^1 + o(\overline{\eta}(\tau_{k})),
	\end{eqnarray*}
	where $\boldsymbol\varphi_k=(\varphi_{1,k},\varphi_{2,k})$. Here we used the equality $\mathcal W_{\tau_k}=\overline\eta(\tau_k)\boldsymbol\varphi_k+o(\overline\eta(\tau_k))$ and $\boldsymbol{\varphi}_{k}(t- \overline{\eta}(\tau_{k})a_1,\theta)-\boldsymbol{\varphi}_{k}(t,\theta)$ goes to zero as $\tau_k\rightarrow\infty$.
	
Consequently, by \eqref{eq003}, for $t \in [0, 2NT_{\varepsilon}]$ we get that
\begin{equation*}
\mathcal{W}_{\tau_{k} + s_{k}}= \overline{\eta}(\tau_{k})\boldsymbol{\tilde{\varphi}} +\overline\eta(\tau_k)a_3\phi_{\varepsilon,0}^3 + o(\overline{\eta}(\tau_{k})),
\end{equation*}
which implies that
\begin{equation*}
	\max_{I_N}|\mathcal{W}_{\tau_{k} + s_{k} + NT_{\varepsilon}}| = \max_{ [NT_{\varepsilon},2NT_{\varepsilon}]} |\mathcal{W}_{\tau_{k}+s_{k}}|\leq \overline{\eta}(\tau_{k}) \max_{ [NT_{\varepsilon},2NT_{\varepsilon}]}\left(|\boldsymbol{\tilde{\varphi}}|+|a_3v_\varepsilon|\right) + o(\overline{\eta}(\tau_{k})).
\end{equation*}
	
Since $\boldsymbol{\tilde{\varphi}}$ decreases exponentially with a fixed rate, by \eqref{eq008} we can choose $N$ and $b>0$ suficiently large in a way that the last equality implies that 
\begin{equation*}
		\max_{I_N}|\mathcal{W}_{\tau_{k} + s_{k} + NT_{\varepsilon}}| \leq \frac{1}{4}\overline{\eta}(\tau_{k}).
\end{equation*}
On the other hand, note that 
\begin{equation*}
	e^{-(2-\delta)(\tau_{k} + s_{k} +NT_{\varepsilon})} \leq e^{-(2-\delta)NT_{\varepsilon}}\overline{\eta}(\tau_{k})\leq\frac{1}{4}\overline{\eta}(\tau_{k})
\end{equation*}
which implies that $\overline{\eta}(\tau + NT_{\varepsilon} + s) \leq \frac{1}{2}\overline{\eta}(\tau)$, a contradiction. This ends the proof of the Claim 3.

	Once the above claim is proved, using an iterative argument, we are ready to prove that there exists $\sigma$ such that $w_{i,\sigma} \rightarrow 0$ as $t \rightarrow \infty$ for each coordinate. First choose $\tau_0$ and $N$ sufficiently large satisfying the Claim 3 and such that $C_1\overline{\eta}(\tau_0) \leq \frac{1}{2}NT_\varepsilon$. Let $s_0=-\overline\eta(\tau_0)a_1$ be chosen as above. Thus we have $|s_0|\leq C_1\overline\eta(\tau_0)\leq\frac{1}{2}NT_\varepsilon$. Define inductively three sequences as 
	\begin{equation*}
	\begin{array}{l}
	\sigma_k = \tau_0 +\displaystyle \sum_{i=0}^{k-1}s_i\\
	\tau_k = \tau_{k-1} + s_{k-1} + NT_\varepsilon=\sigma_k+kNT_\varepsilon\\
	s_k=-\overline\eta(\tau_k)a_1.
	\end{array}
	\end{equation*}
	By the Claim 3 we get by induction $\overline{\eta}(\tau_k) \leq 2^{-k}\overline{\eta}(\tau_0)$ and $|s_k| \leq 2^{-k-1}NT_\varepsilon$. Hence there exists the limit $\sigma = \lim \sigma_k \leq \tau_0 + NT_\varepsilon$ and then $\tau_k\rightarrow\infty$ as $k\rightarrow\infty$. 
	
	We claim $\sigma$ is the correct translation parameter.
	
	In fact, choose $k$ such that $t =kNT_\varepsilon +[t]$ with $[t] \in I_N$, and write
	\begin{eqnarray*}
		w_{i,\sigma}(t,\theta) &=& v_{i}(t+\sigma, \theta) - \Lambda_{i}v_{\varepsilon}(t)\\
		&=& v_{i}(t + \sigma,\theta) - v_{i}(t + \sigma_k,\theta) + v_{i}(t + \sigma_k,\theta) - \Lambda_{i}v_{\varepsilon}(t).
	\end{eqnarray*}
Since $\partial_tv_i$ is uniform boundedness, then
	$$v_{i}(t + \sigma,\theta) - v_{i}(t + \sigma_k,\theta) =\partial_tv_i(t_0)\sum_{j=k}^\infty s_j=O(2^{-k}),$$
	for some $t_0$. Besides,
	$$v_{i}(t + \sigma_k,\theta) - \Lambda_{i}v_{\varepsilon}(t)=v_i(\tau_k+[t],\theta)- \Lambda_{i}v_{\varepsilon}([t])=w_{i,\tau_k}([t],\theta).$$
	Thus,
	$$\mathcal W_{\sigma}(t,\theta)=\mathcal W_{\tau_k}([t],\theta)+O(2^{k}).$$
	
	Since $b\displaystyle\max_{I_N}|\mathcal W_{\tau_k}|=\eta(\tau_k) \leq \overline{\eta}(\tau_k) \leq 2^{-k}\overline{\eta}(\tau_0)$, we will have $|w_{i, \sigma}(t,\theta)| = O(2^{-k})$ or equivalently, using that $t =kNT_\varepsilon +[t]$, we have
	\begin{equation*}
	|w_{i, \sigma}(t,\theta)| \leq C_1e^{ - \frac{\log 2}{NT_\varepsilon}t}
	\end{equation*}
	which finishes the proof of the theorem.
\end{proof}

As a direct consequency of the results proved in this section we have the following corollary.

\begin{corollary} Suppose that $\mathcal{U}$ is a solution of the system \eqref{S} in the punctured ball $B_{1}^{n}(0)\backslash \{0\}$ and $3\leq n\leq 5$. Then there exists a Fowler-type solution $\mathcal{U}_{0}$ from $\eqref{LS}$ such that 
\begin{equation*}
		\mathcal{U}(x) = (1 + O(|x|^{\alpha}))\mathcal{U}_{0}(x)
\end{equation*}
as $x \rightarrow 0$, for some $\alpha > 0$.
\end{corollary}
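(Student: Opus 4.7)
The plan is to derive this corollary as a direct combination of the preceding theorem of Section \ref{crs} with the two-sided bounds established in Section \ref{pb}. The argument splits according to the nature of the isolated singularity at the origin. If the singularity is removable, then by Theorem \ref{t5} the solution extends smoothly across zero, in which case the asymptotic statement reduces to a trivial matter (one may absorb this case by taking $\mathcal{U}$ itself as its own smooth model). The substantive case is therefore when the origin is a genuinely nonremovable singularity.

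In the nonremovable case, and under the standing hypotheses \ref{H1} and \ref{H2} on the potential which are in force throughout the paper, I would first invoke Theorem \ref{upper} (valid exactly in the low-dimensional range $3 \leq n \leq 5$) to obtain the upper bound
\[
|\mathcal{U}(x)| \leq c\, d_{g}(x,0)^{(2-n)/2}
\]
near the origin. Next, since the Pohozaev invariant $P(\mathcal{U})$ is strictly negative by Theorem \ref{t5} (the case $P(\mathcal{U})=0$ being ruled out by nonremovability), Corollary \ref{cc1} supplies the matching lower bound
\[
|\mathcal{U}(x)| \geq c\, d_{g}(x,0)^{(2-n)/2}.
\]

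Together these two estimates furnish precisely the two-sided hypothesis \eqref{hip} required by the theorem proved just before the corollary. Applying that theorem directly then produces a Fowler-type solution $\mathcal{U}_{0} = u_{0}\Lambda$ of the limit system \eqref{LS} satisfying the asymptotic identity $\mathcal{U}(x) = (1 + O(|x|^{\alpha}))\mathcal{U}_{0}(x)$ for some $\alpha > 0$. All the substantive analytic work — the Simon-type iterative rescaling combined with the Jacobi-field decomposition of Section \ref{jf}, and the delicate choice of translation parameter $\sigma = \lim \sigma_k$ that identifies the correct Delaunay phase — has already been carried out in the proof of that theorem, so no new idea is required here. The corollary is genuinely a bookkeeping statement: its only nontrivial content is that the dichotomy ``removable versus asymptotic to Fowler-type'' is exhaustive, and this exhaustiveness is exactly what Theorem \ref{t5} together with the preceding convergence theorem guarantees.
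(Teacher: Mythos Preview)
Your proposal is correct and matches the paper's approach exactly: the paper gives no proof at all for this corollary, simply labeling it ``a direct consequence of the results proved in this section,'' and you have correctly identified that the direct consequence is the combination of Theorem \ref{upper} (upper bound), Corollary \ref{cc1} (lower bound under \ref{H1}--\ref{H2} in the nonremovable case), and the convergence theorem immediately preceding the corollary. Your only loose end is the removable case, where ``taking $\mathcal{U}$ itself as its own smooth model'' does not literally produce a Fowler-type solution of \eqref{LS}; but this imprecision is inherited from the corollary's own statement, which omits the hypotheses \ref{H1}, \ref{H2} and implicitly targets the singular case (compare the dichotomy stated after Theorem \ref{main} in the introduction).
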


%
%


\end{document}